\numberwithin{equation}{section}
\numberwithin{figure}{section}
\theoremstyle{plain}
\newtheorem{thm}{\protect\theoremname}[section]
  \theoremstyle{plain}
  \newtheorem{lem}[thm]{\protect\lemmaname}
  \theoremstyle{plain}
  \newtheorem{prop}[thm]{\protect\propositionname}
  \theoremstyle{remark}
  \newtheorem{rem}[thm]{\protect\remarkname}
  \theoremstyle{plain}
  \newtheorem{conjecture}[thm]{\protect\conjecturename}
  \theoremstyle{plain}
  \newtheorem{cor}[thm]{\protect\corollaryname}
\DeclareMathOperator{\diag}{diag}
\DeclareMathOperator{\rank}{rank}
\DeclareMathOperator{\Mat}{Mat}
\DeclareMathOperator{\spn}{span}
\providecommand{\Id}{\mathbbm{1}}
  \providecommand{\conjecturename}{Conjecture}
  \providecommand{\corollaryname}{Corollary}
  \providecommand{\lemmaname}{Lemma}
  \providecommand{\propositionname}{Proposition}
  \providecommand{\remarkname}{Remark}
\providecommand{\theoremname}{Theorem}
\begin{document}

\subjclass[2000]{53D17,13F60}

\keywords{Poisson--Lie group, cluster algebra, Belavin--Drinfeld triple}

\title[Exotic Cluster Structures on $SL_{n}$ with BD Data of Minimal Size,
II]{Exotic Cluster Structures on $SL_{n}$ with Belavin--Drinfeld Data
of Minimal Size, II. Correspondence between cluster structures and
BD triples }

\date{November, 2015}

\author{Idan Eisner}

\address{Department of Mathematics, University of Haifa, 199 Abba Khoushy
Ave., Mount Carmel 3498838, Haifa, Israel}

\email{eisner@math.haifa.ac.il}
\begin{abstract}
Using the notion of compatibility between Poisson brackets and cluster
structures in the coordinate rings of simple Lie groups, Gekhtman
Shapiro and Vainshtein conjectured a correspondence between the two.
Poisson Lie groups are classified by the Belavin--Drinfeld classification
of solutions to the classical Yang Baxter equation. For any non trivial
Belavin--Drinfeld data of minimal size for $SL_{n}$, the companion
paper constructed a cluster structure with a locally regular initial
seed, which was proved to be compatible with the Poisson bracket associated
with that Belavin--Drinfeld data.

This paper proves the rest of the conjecture: the corresponding upper
cluster algebra $\overline{\mathcal{A}}_{\mathbb{C}}(\mathcal{C})$
is naturally isomorphic to $\mathcal{O}\left(SL_{n}\right)$, the
torus determined by the BD triple generates theaction
of $(\mathbb{C}^{*})^{2k_{T}}$ on $\mathbb{C}\left(SL_{n}\right)$,
and the correspondence between Belavin--Drinfeld classes and cluster
structures is one to one. 
\end{abstract}

\maketitle

\section{Introduction}

Since cluster algebras were introduced in \cite{FZ1}, a natural question
was the existence of cluster structures in the coordinate rings of
a given algebraic variety $V$. Partial answers were given for Grassmannians
$V=Gr_{k}\left(n\right)$ \cite{Scott} and double Bruhat cells \cite{BFZ}.
If $V=\mathcal{G}$ is a simple Lie group, one can extend the cluster
structure found in the double Bruhat cell to one in $\mathcal{O}\left(\mathcal{G}\right)$.
The compatibility of cluster structures and Poisson brackets, as characterized
in \cite{GSV1} suggested a connection between the two: given a Poisson
bracket, does a compatible cluster structure exist? Is there a way
to find it?

In the case that $V=\mathcal{G}$ is a simple complex Lie group, R-matrix
Poisson brackets on $\mathcal{G}$ are classified by the Belavin--Drinfeld
classification of solutions to the classical Yang Baxter equation
\cite{BDsolCYBE}. Given a solution of that kind, a Poisson bracket
can be defined on $\mathcal{G}$, making it a Poisson--Lie group.

The Belavin--Drinfeld (BD) classification is based on pairs of isometric
subsets of simple roots of the Lie algebra $\mathfrak{g}$ of $\mathcal{G}$.
The trivial case when the subsets are empty corresponds to the standard
Poisson bracket on $\mathcal{G}$ . It has been shown in \cite{gekhtman2012cluster}
that extending the cluster structure introduced in \cite{BFZ} from
the double Bruhat cell to the whole Lie group yields a cluster structure
that is compatible with the standard Poisson bracket. This led to
naming this cluster structure ``standard'', and trying to find other
cluster structures, compatible with brackets associated with non trivial
BD subsets. The term ``exotic'' was suggested for these non standard
structures \cite{gekhtman2013arxiv}.

Gekhtman, Shapiro and Vainshtein conjectured a bijection between BD
classes and cluster structures on simple Lie groups \cite{gekhtman2012cluster,gekhtman2013exotic}.
According to the conjecture, for a given BD class for $\mathcal{G}$,
there exists a cluster structure on $\mathcal{G}$, with rank determined
by the BD data. This cluster structure is compatible with the associated
Poisson bracket. The conjecture also states that the structure is
regular, and that the upper cluster algebra coincides with the ring
of regular functions on $\mathcal{G}$. The conjecture was proved
for the standard case and for $\mathcal{G}=SL_{n}$ with $n<5$ in
\cite{gekhtman2012cluster}. The Cremmer--Gervais case, which in some
sense is the ``furthest'' from the standard one, was proved in \cite{gekhtman2013exotic}.
It was also found to be true for all possible BD classes for $SL_{5}$
\cite{eisner2014SL5}.

This paper considers the conjecture for $SL_{n}$ when the BD data
is of minimal size, i.e., the two subsets contain exactly one simple
root. In the companion paper \cite{eisner2015part1} the first part
of the conjecture was proved: starting with two such subsets $\left\{ \alpha\right\} $
and $\left\{ \beta\right\} $, an algorithm was given constructing
a set $\mathcal{B}_{\alpha\beta}$ of functions that served as an
initial cluster. Defining an appropriate quiver $Q_{\alpha\beta}$
(or an exchange matrix $\tilde{B}_{\alpha\beta}$), one can obtain
an initial seed for a cluster structure on $SL_{n}$. This structure
is locally regular and it is compatible with the Poisson bracket associated
with the BD data $\left\{ \alpha\right\} \mapsto\left\{ \beta\right\} $. 

This paper completes the proof of the conjecture: the bijection between
cluster structures and BD classes of this type is established, the
upper cluster algebra is proved to be naturally isomorphic to the
ring of regular functions on $SL_{n}$, and a description of a global
toric action is given.

\section{Background }

\subsection{Cluster structures}

Let $\{z_{1},\ldots,z_{m}\}$ be a set of independent variables, and
let $S$ denote the ring of Laurent polynomials generated by $z_{1},\ldots,z_{m}$
- 
\[
S=\mathbb{Z}\left[z_{1}^{\pm1},\ldots,z_{m}^{\pm1}\right].
\]
 (Here and in what follows $z^{\pm1}$ stands for $z,z^{-1}$). The
\emph{ambient field} $\mathcal{F}$ is the field of rational functions
in $n$ independent variables (distinct from $z_{1},\ldots,z_{m}$),
with coefficients in the field of fractions of $S$.

A \emph{seed} (of geometric type) is a pair $(\textbf{x},\tilde{B})$,
where $\textbf{x}=(x_{1},\ldots,x_{n})$ is a transcendence basis
of $\mathcal{F}$ over the field of fractions of $S$, and $\tilde{B}$
is an $n\times(n+m)$ integer matrix whose principal part $B$ (that
is, the $n\times n$ matrix formed by columns $1,\ldots,n$) is skew-symmetric.
The set $\textbf{x}$ is called a \emph{cluster}, and its elements
$(x_{1},\ldots,x_{n})$ are called \emph{cluster variables}. Set $x_{n+i}=z_{i}$
for $i\in[1,m]$ (we use the notation $[a,b]$ for the set of integers
$\left\{ a,a+1,\ldots,b\right\} .$ When $a=1$ we write just $\left[m\right]$
for the set $\left[1,m\right]$). The elements $x_{n+1},\ldots,x_{n+m}$
are called \emph{stable variables} (or \emph{frozen variables}). The
set $\tilde{\textbf{x}}=(x_{1},\ldots,x_{n},x_{n+1},\ldots,x_{n+m})$
is called an \emph{extended cluster}. The square matrix $B$ is called
the \emph{exchange matrix}, and $\tilde{B}$ is called the \emph{extended
exchange matrix}. We sometimes denote the entries of $\tilde{\mathcal{B}}$
by $b_{ij}$, or say that $\tilde{B}$ is skew-symmetric when the
matrix $B$ has this property.

Let $\Sigma=(\tilde{\mathbf{x}},\tilde{B})$ be a seed. The set ${\tilde{\mathbf{x}}_{k}=(\tilde{\mathbf{x}}\setminus\{x_{k}\})\cup\left\{ x'_{k}\right\} }$
is called the adjacent cluster in direction $k\in\left[n\right]$,
where $x'_{k}$ is defined by the \emph{exchange relation 
\begin{equation}
x_{k}\cdot x'_{k}=\prod_{b_{kj}>0}x_{j}^{b_{kj}}+\prod_{b_{kj}<0}x_{j}^{-b_{kj}}.\label{eq:ExRltn}
\end{equation}
 } \emph{A matrix mutation} $\mu_{k}\left(\tilde{B}\right)$\emph{
}of $\tilde{B}$ in direction $k$ is defined by 
\[
b'_{ij}=\begin{cases}
-b_{ij} & \text{ if }i=k\text{ or }j=k\\
b_{ij}+\frac{1}{2}\left(\left|b_{ik}\right|b_{kj}+b_{ik}\left|b_{kj}\right|\right) & \text{ otherwise.}
\end{cases}
\]
 Seed mutation in direction $k$ is then defined $\mu_{k}\left(\Sigma\right)=(\tilde{\mathbf{x}}_{k},\mu_{k}(\tilde{B})).$

Two seeds are said to be mutation equivalent if they can be connected
by a sequence of seed mutations.

Given a seed $\Sigma=(\textbf{\ensuremath{\mathbf{x}}},\tilde{B})$,
the \emph{cluster structure} $\mathcal{C}(\Sigma)$ (sometimes denoted
$\mathcal{C}(\tilde{B})$, if $\mathbf{\mathbf{x}}$ is understood
from the context) is the set of all seeds that are mutation equivalent
to $\Sigma$. The number $n$ of rows in the matrix $\tilde{B}$ is
called the \emph{rank} of $\mathcal{C}$.

Let $\Sigma$ be a seed as above, and $\mathbb{A}=\mathbb{Z}\left[x_{n+1},\ldots,x_{n+m}\right]$.
The \emph{cluster algebra} $\mathcal{A}=\mathcal{A}(\mathcal{C})=\mathcal{A}(\tilde{B})$
associated with the seed $\Sigma$ is the $\mathbb{A}$-subalgebra
of $\mathcal{F}$ generated by all cluster variables in all seeds
in $\mathcal{C}(\tilde{B})$. The \emph{upper cluster algebra }$\mathcal{\overline{A}}=\mathcal{\overline{A}}(\mathcal{C})=\mathcal{\overline{A}}(\tilde{B})$\emph{
}is the intersection of the rings of Laurent polynomials over $\mathbb{A}$
in cluster variables taken over all seeds in $\mathcal{C}(\tilde{B})$.
The famous \emph{Laurent phenomenon} \cite{FZ2} claims the inclusion
$\mathcal{A}(\mathcal{C})\subseteq\mathcal{\overline{A}}(\mathcal{C})$.

A useful tool to prove that some function belongs to the upper cluster
algebra is the following Lemma \cite[Lemma 8.3]{gekhtman2013exotic}:
\begin{lem}
\label{lem:InAGSV}Let $\mathcal{C}$ be a cluster structure of geometric
type and $\overline{\mathcal{A}}$ be the corresponding upper cluster
algebra. Suppose $\frac{M_{1}}{f_{1}^{m_{1}}}=\frac{M_{2}}{f_{2}^{m_{2}}}=M$
for $M_{1},M_{2}\in\overline{\mathcal{A}}$, $m_{1},m_{2}\in\mathbb{N}$
and coprime cluster variables $f_{1}\neq f_{2}$. Then $M\in\overline{\mathcal{A}}$. 
\end{lem}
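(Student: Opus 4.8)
The plan is to prove membership in $\overline{\mathcal{A}}$ one seed at a time, exploiting that inside a single fixed seed the relevant Laurent ring is a unique factorization domain. Recall that by definition $\overline{\mathcal{A}}=\bigcap_{\Sigma}\mathcal{L}_{\Sigma}$, where $\Sigma$ ranges over all seeds of $\mathcal{C}$ and $\mathcal{L}_{\Sigma}=\mathbb{A}\left[x_{1}^{\pm1},\ldots,x_{n}^{\pm1}\right]$ is the ring of Laurent polynomials over $\mathbb{A}=\mathbb{Z}\left[x_{n+1},\ldots,x_{n+m}\right]$ in the cluster variables $x_{1},\ldots,x_{n}$ of $\Sigma$. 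Hence it suffices to show $M\in\mathcal{L}_{\Sigma}$ for every seed $\Sigma$. The structural fact I would use is that each $\mathcal{L}_{\Sigma}$ is the localization of the polynomial ring $\mathbb{Z}\left[x_{1},\ldots,x_{n+m}\right]$ at the multiplicative set generated by $x_{1},\ldots,x_{n}$; since that polynomial ring is a UFD (Gauss's lemma, iterated) and a localization of a UFD is again a UFD, every $\mathcal{L}_{\Sigma}$ is a UFD.

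Next I would fix a seed $\Sigma$ and transcribe the hypotheses into $\mathcal{L}_{\Sigma}$. Because $M_{1},M_{2}\in\overline{\mathcal{A}}\subseteq\mathcal{L}_{\Sigma}$, and the cluster variables $f_{1},f_{2}$ lie in $\mathcal{A}\subseteq\overline{\mathcal{A}}\subseteq\mathcal{L}_{\Sigma}$, all four elements are genuine Laurent polynomials in the current cluster. Clearing denominators in $M_{1}/f_{1}^{m_{1}}=M_{2}/f_{2}^{m_{2}}$ gives the identity
\[
M_{1}f_{2}^{m_{2}}=M_{2}f_{1}^{m_{1}}
\]
in $\mathcal{L}_{\Sigma}$. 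The right-hand side is visibly divisible by $f_{1}^{m_{1}}$, so $f_{1}^{m_{1}}\mid M_{1}f_{2}^{m_{2}}$ in the UFD $\mathcal{L}_{\Sigma}$. If $f_{1}$ and $f_{2}$ are coprime in $\mathcal{L}_{\Sigma}$, then so are $f_{1}^{m_{1}}$ and $f_{2}^{m_{2}}$, and Euclid's lemma forces $f_{1}^{m_{1}}\mid M_{1}$. Consequently $M=M_{1}/f_{1}^{m_{1}}\in\mathcal{L}_{\Sigma}$. As $\Sigma$ was arbitrary, $M\in\bigcap_{\Sigma}\mathcal{L}_{\Sigma}=\overline{\mathcal{A}}$, which is the claim.

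The only real content hides in the coprimality step, so that is where I would concentrate. The hypothesis that $f_{1}\neq f_{2}$ are coprime cluster variables must be used as coprimality in each individual $\mathcal{L}_{\Sigma}$, i.e.\ that $f_{1}$ and $f_{2}$ share no irreducible factor there. Two cases arise. If $f_{1}$ happens to be a cluster variable of the seed $\Sigma$ itself, it is a unit in $\mathcal{L}_{\Sigma}$ and the divisibility $f_{1}^{m_{1}}\mid M_{1}$ is automatic; similarly a unit $f_{2}$ is trivially coprime to everything. The substantive case is when neither $f_{1}$ nor $f_{2}$ is a unit in $\mathcal{L}_{\Sigma}$, where one genuinely needs that the global coprimality of the two cluster variables descends to the chosen localization. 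Verifying that this descent holds uniformly over all seeds is the main obstacle; once it is in hand, the remainder of the argument is the formal UFD manipulation above.
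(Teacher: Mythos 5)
Your reconstruction is essentially the intended proof. Note first that the paper does not actually prove this lemma: it is imported verbatim from \cite[Lemma 8.3]{gekhtman2013exotic}, and the short argument given there is precisely your per-seed computation --- write $\overline{\mathcal{A}}=\bigcap_{\Sigma}\mathcal{L}_{\Sigma}$, observe that each $\mathcal{L}_{\Sigma}=\mathbb{A}\left[x_{1}^{\pm1},\ldots,x_{n}^{\pm1}\right]$ is a UFD (a localization of the polynomial ring $\mathbb{Z}\left[x_{1},\ldots,x_{n+m}\right]$), clear denominators to get $M_{1}f_{2}^{m_{2}}=M_{2}f_{1}^{m_{1}}$, and conclude $f_{1}^{m_{1}}\mid M_{1}$, hence $M\in\mathcal{L}_{\Sigma}$, from coprimality. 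Your further observation that any seed whose cluster contains $f_{1}$ or $f_{2}$ is trivial, because that variable is then a unit of $\mathcal{L}_{\Sigma}$ (cluster variables, unlike stable ones, are inverted), is correct and usefully isolates the only seeds where anything needs checking.

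The one place where you stop short --- the ``descent'' of coprimality to every $\mathcal{L}_{\Sigma}$ --- is not a step you were expected to supply: it is the hypothesis. The word ``coprime'' in the statement is to be read in the rings where the argument runs, i.e.\ $f_{1}$ and $f_{2}$ are assumed to have no common non-unit factor in the Laurent ring of each seed; this is how the lemma is deployed both in \cite{gekhtman2013exotic} and in the present paper, where in the applications (Lemma \ref{lem:xinandxnjInA} and Lemma \ref{lem:xijinA1n-1}) the relevant functions are distinct irreducible minors. That said, your caution is mathematically well placed: coprimality in one UFD --- say in $\mathcal{O}\left(SL_{n}\right)$, or in the Laurent ring of the initial seed --- does not formally transfer to another UFD with the same fraction field, so under the weaker reading of ``coprime'' the lemma as stated, and not merely your write-up, would need a supplementary argument. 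Since you flag this point explicitly and everything else is airtight, your proposal coincides with the cited proof rather than deviating from it.
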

It is sometimes convenient to describe a cluster structure $\mathcal{C}(\tilde{B})$
in terms of its \emph{quiver} $Q(\tilde{B}):$ it is a directed graph
with $n+m$ nodes labeled $x_{1},\ldots,x_{n+m}$ (or just $1,\ldots,n+m$),
and an arrow pointing from $x_{i}$ to $x_{j}$ with weight $b_{ij}$
if $b_{ij}>0$.

Let $V$ be a quasi-affine variety over $\mathbb{C}$, $\mathbb{C}\left(V\right)$
be the field of rational functions on $V$, and $\mathcal{O}\left(V\right)$
be the ring of regular functions on $V$. Let $\mathcal{C}$ be a
cluster structure in $\mathcal{F}$ as above, and assume that $\left\{ f_{1},\ldots,f_{n+m}\right\} $
is a transcendence basis of $\mathbb{C}\left(V\right)$. Then the
map $\varphi:x_{i}\to f_{i}$, $1\leq i\leq n+m$, can be extended
to a field isomorphism $\varphi:\mathcal{F}_{\mathbb{C}}\to\mathbb{C}(V)$,
with $\mathcal{F}_{\mathbb{C}}=\mathcal{F}\otimes\mathbb{C}$ obtained
from $\mathcal{F}$ by extension of scalars. The pair$\left(\mathcal{C},\varphi\right)$
is then called a cluster structure in $\mathbb{C}\left(V\right)$
(or just a cluster structure on $V$), and the set $\left\{ f_{1},\ldots,f_{n+m}\right\} $
is called an extended cluster in $\left(\mathcal{C},\varphi\right)$.
Sometimes we omit direct indication of $\varphi$ and just say that
$C$ is a cluster structure on $V$. A cluster structure $\left(\mathcal{C},\varphi\right)$
is called \emph{regular} if $\varphi\left(x\right)$ is a regular
function for any cluster variable $x$, and a seed $\Sigma$ is called
\emph{locally regular }if all the cluster variables in $\Sigma$ and
in all the adjacent seeds are regular functions. The two algebras
defined above have their counterparts in $\mathcal{F}_{\mathbb{C}}$
obtained by extension of scalars; they are denoted $\mathcal{A}_{\mathbb{C}}$
and $\overline{\mathcal{A}}_{\mathbb{C}}$. If, moreover, the field
isomorphism $\varphi$ can be restricted to an isomorphism of $\mathcal{A}_{\mathbb{C}}$
(or $\overline{\mathcal{A}}_{\mathbb{C}}$) and $\mathcal{O}\left(V\right)$,
we say that $\mathcal{A}_{\mathbb{C}}$ (or $\overline{\mathcal{A}}_{\mathbb{C}}$)
is \emph{naturally isomorphic} to $\mathcal{O}\left(V\right)$.

The following statement is a weaker analogue of Proposition 3.37 in
\cite{GSV}:
\begin{prop}
\label{prop:ACNatIsoO(V)}Let $V$ be a Zariski open subset in $\mathbb{C}^{n+m}$
and $\left(\mathcal{C}=\mathcal{C}\left(\tilde{B}\right),\varphi\right)$
be a cluster structure in $\mathbb{C}\left(V\right)$ with $n$ cluster
and $m$ stable variables such that \end{prop}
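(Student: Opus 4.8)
The plan is to establish the natural isomorphism by proving the two inclusions $\mathcal{O}\left(V\right)\subseteq\overline{\mathcal{A}}_{\mathbb{C}}(\mathcal{C})$ and $\overline{\mathcal{A}}_{\mathbb{C}}(\mathcal{C})\subseteq\mathcal{O}\left(V\right)$ separately, after identifying both rings inside the common field $\mathbb{C}\left(V\right)\cong\mathcal{F}_{\mathbb{C}}$ via $\varphi$. Throughout I treat the initial extended cluster $\varphi(\tilde{\mathbf{x}})=(f_{1},\ldots,f_{n+m})$ as a regular coordinate system on $V$, which is legitimate since these functions are regular, algebraically independent, and $V$ is open in $\mathbb{C}^{n+m}$. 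Once the two inclusions are in place, the fact that the field isomorphism $\varphi$ restricts to a ring isomorphism is immediate.

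For the inclusion $\mathcal{O}\left(V\right)\subseteq\overline{\mathcal{A}}_{\mathbb{C}}$, I would take a regular function $g$ on $V$ and show it is a Laurent polynomial in every cluster. Since each extended cluster consists of algebraically independent regular functions, it identifies $V$ with an open subset of $\mathbb{C}^{n+m}$ whose complement lies in the union of the coordinate hyperplanes $\{f_{i}=0\}$; a regular function can then have poles only along those hyperplanes, hence it is a Laurent polynomial in that cluster. Carrying this out for the initial cluster and invoking the Laurent phenomenon to transport the conclusion between mutation-equivalent clusters gives $g\in\overline{\mathcal{A}}_{\mathbb{C}}$.

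For the reverse inclusion $\overline{\mathcal{A}}_{\mathbb{C}}\subseteq\mathcal{O}\left(V\right)$, which I expect to be the main obstacle, I would take $h\in\overline{\mathcal{A}}_{\mathbb{C}}$ and argue that $h$ has no pole in codimension one, then conclude regularity by normality of $V$ (being smooth, as an open subset of affine space). By definition $h$ is a Laurent polynomial in the initial cluster, so its only possible polar divisors are the hypersurfaces $D_{k}=\{f_{k}=0\}$ for the unfrozen indices $k\in[n]$, the frozen variables being controlled by hypothesis. To rule out a pole along $D_{k}$, I would use that $h$ is simultaneously a Laurent polynomial in the cluster adjacent in direction $k$: on the dense open subset of $D_{k}$ where $\varphi(x_{k}')\neq0$ and the remaining cluster variables are nonzero, the adjacent Laurent expansion exhibits $h$ as regular, so its order along $D_{k}$ is nonnegative. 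Here the coprimality of $f_{k}=\varphi(x_{k})$ with $\varphi(x_{k}')$ guarantees that $D_{k}\cap\{\varphi(x_{k}')=0\}$ has codimension two, so this dense open subset is genuinely nonempty.

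The crux of the argument is precisely this codimension-one pole cancellation: the entire role of the hypotheses on regularity of the adjacent cluster variables and their coprimality with the initial ones is to force the absence of poles along each $D_{k}$. After that, $h$ is regular away from a subset of codimension at least two of the smooth variety $V$, and the algebraic Hartogs principle (normality of $V$) extends it to a regular function on all of $V$, completing the inclusion and hence the natural isomorphism.
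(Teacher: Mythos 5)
Your proposal has a structural misreading that invalidates half of it: the inclusion $\mathcal{O}\left(V\right)\subseteq\varphi\left(\overline{\mathcal{A}}_{\mathbb{C}}\left(\mathcal{C}\right)\right)$ is exactly condition \ref{prop:ACNIOVCond5} of the proposition --- a hypothesis to be used, not a claim to be proved. In this very paper that condition is what Section \ref{sec:The-natural-isomorphism} labors to verify (Theorem \ref{thm:xijbelongstoA}, showing each $x_{ij}$ lies in the upper cluster algebra); if it followed formally from the other conditions, as you claim, that entire section would be superfluous. Moreover your argument for it is wrong on its own terms: algebraic independence of the regular functions $f_{1},\ldots,f_{n+m}$ only makes the map $V\to\mathbb{C}^{n+m}$ dominant; it does not identify $V$ with an open subset whose complement lies in the union of the hyperplanes $\left\{ f_{i}=0\right\} $, and a regular function on $V$ need not be a Laurent polynomial in a given cluster (already $f_{1}=z_{1}z_{2}$, $f_{2}=z_{1}z_{3}$ shows how badly the ``coordinate system'' intuition fails: independent regular functions can share a zero divisor).

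For the substantive inclusion $\varphi\left(\overline{\mathcal{A}}_{\mathbb{C}}\right)\subseteq\mathcal{O}\left(V\right)$ your skeleton --- poles only along $\left\{ f_{k}=0\right\} $ for mutable $k$ (correct, since stable variables are not inverted in $\mathbb{A}$), cancellation via the adjacent cluster, then algebraic Hartogs on the smooth $V$ --- has the right shape; it is essentially the Berenstein--Fomin--Zelevinsky-style argument behind Proposition 3.37 of \cite{GSV}, which this paper cites rather than proves. But there is a genuine gap: you invoke ``coprimality'' of $f_{k}$ and $\varphi\left(x'_{k}\right)$ as if it were a hypothesis. It is not; the actual hypothesis is $\rank\tilde{B}=n$ (condition \ref{prop:ACNIOVCond1}), which you never use, and coprimality must be extracted from it via the BFZ coprimality and upper-bound machinery. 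Worse, your pole-cancellation step tacitly assumes that a pole divisor $D\subseteq\left\{ f_{k}=0\right\} $ is not contained in $\left\{ f_{j}=0\right\} $ for some other inverted index $j\neq k$; without pairwise coprimality of the initial cluster variables (again not a hypothesis, and nowhere addressed), the adjacent-cluster Laurent expansion can still carry a pole along $D$ through the factor $f_{j}^{-1}$, and your argument does not close. Finally, condition \ref{prop:ACNIOVCond4} goes unused (and is not what controls the frozen directions --- the definition of $\mathbb{A}$ is), and you omit the first half of the conclusion, the regularity of $\mathcal{C}$, though that would follow cheaply from $\mathcal{A}_{\mathbb{C}}\subseteq\overline{\mathcal{A}}_{\mathbb{C}}$ once the inclusion into $\mathcal{O}\left(V\right)$ is in place.
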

\begin{enumerate}
\item $\rank\tilde{B}=n$;\label{prop:ACNIOVCond1} 
\item there exists an extended cluster $\tilde{\mathbf{x}}=\left(x_{1},\ldots,x_{n+m}\right)$
in $\mathcal{C}$ such that $\varphi\left(x_{i}\right)$ is regular
on $V$ for $i\in\left[n+m\right]$;\label{prop:ACNIOVCond2} 
\item for any cluster variable $x'_{k},\ k\in\left[n\right]$, obtained
by applying the exchange relation \eqref{eq:ExRltn} to $\tilde{\mathbf{x}}$,
$\varphi\left(x'_{k}\right)$ is regular on $V$;\label{prop:ACNIOVCond3} 
\item for any stable variable $x_{n+i}$, $i\in\left[m\right]$, the function
$\varphi\left(x_{n+i}\right)$ vanishes at some point of $V$;\label{prop:ACNIOVCond4} 
\item each regular function on $V$ belongs to $\varphi\left(\overline{\mathcal{A}}_{\mathbb{C}}\left(\mathcal{C}\right)\right)$.\label{prop:ACNIOVCond5} 
\end{enumerate}
Then $\mathcal{C}$ is a regular cluster structure and $\overline{\mathcal{A}}_{\mathbb{C}}\left(\mathcal{C}\right)$
is naturally isomorphic to $\mathcal{O}\left(V\right)$.

\subsection{Compatible Poisson brackets\label{sub:compPoissonBrckts}}

Let $\left\{ \cdot,\cdot\right\} $ be a Poisson bracket on the ambient
field $\mathcal{F}$. Two elements $f_{1},f_{2}\in\mathcal{F}$ are
\emph{log canonical }if there exists a rational number $\omega_{f_{1},f_{2}}$
such that $\left\{ f_{1},f_{2}\right\} =\omega_{f_{1},f_{2}}f_{1}f_{2}$.
A set $F\subseteq\mathcal{F}$ is called a log canonical set if every
pair $f_{1},f_{2}\in F$ is log canonical.

A cluster structure $\mathcal{C}$ in $\mathcal{F}$ is said to be
\emph{compatible} with the Poisson bracket $\left\{ \cdot,\cdot\right\} $
if every cluster is a log canonical set with respect to $\left\{ \cdot,\cdot\right\} $.
In other words, for every cluster $\mathbf{x}$ and every two cluster
variables $x_{i},x_{j}\in\mathbf{\tilde{x}}$ there exists $\omega_{ij}$
s.t. 
\begin{equation}
\left\{ x_{i},x_{j}\right\} =\omega_{ij}x_{i}x_{j}
\end{equation}

The skew symmetric matrix $\Omega^{\mathbf{\tilde{x}}}=(\omega_{ij})$
is called the \emph{coefficient matrix} of $\left\{ \cdot,\cdot\right\} $
(in the basis $\mathbf{\tilde{x}}$).

If $\mathcal{C}(\tilde{B})$ is a cluster structure of maximal rank
(i.e., $\rank\tilde{B}=n$), a complete characterization of all Poisson
brackets compatible with $\mathcal{C}(\tilde{B})$ is known (see \cite{GSV1},
and also \cite[Ch. 4]{GSV}). In particular, an immediate corollary
of Theorem 1.4 in \cite{GSV1} is:
\begin{prop}
\label{prop:PoissCompStruc}If $\rank\tilde{B}=n$ then a Poisson
bracket is compatible with $\mathcal{C}(\tilde{B})$ if and only if
its coefficient matrix $\Omega^{\tilde{\mathbf{x}}}$ satisfies $\tilde{B}\Omega^{\tilde{\mathbf{x}}}=\left[D\ 0\right]$,
where $D$ is a diagonal matrix. 
\end{prop}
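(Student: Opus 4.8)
The plan is to derive this from Theorem~1.4 of \cite{GSV1}, so I would first isolate what compatibility means here: it asks that \emph{every} cluster be log canonical, and in particular log canonicality of the initial extended cluster $\tilde{\mathbf{x}}$ makes $\Omega=\Omega^{\tilde{\mathbf{x}}}$ well defined. The mechanism linking the seeds is the matrix form of mutation from \cite{GSV1}: under $\mu_k$ the exchange matrix changes by the standard rule $\tilde{B}\mapsto E_k\tilde{B}F_k$, and \emph{whenever} the mutated cluster is again log canonical its coefficient matrix is the congruence $F_k^{T}\Omega F_k$. Because $\rank\tilde{B}=n$, the coefficient matrix is uniquely pinned down in each seed, so compatibility really is a condition on the single matrix $\Omega^{\tilde{\mathbf{x}}}$.

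The heart of the argument is a one-step local computation. Writing the exchange relation \eqref{eq:ExRltn} as $x_k x_k'=M_++M_-$, each of the two Laurent monomials $M_+,M_-$ is automatically log canonical with every $x_j$, since it is a monomial in the log canonical set $\tilde{\mathbf{x}}$; explicitly $\{M_\pm,x_j\}=M_\pm x_j\bigl(\sum_\ell [\,\pm b_{k\ell}\,]_+\,\omega_{\ell j}\bigr)$. Hence $x_k'=(M_++M_-)/x_k$ is log canonical with $x_j$ for $j\neq k$ if and only if these two coefficients agree, and subtracting them gives the single clean condition $(\tilde{B}\Omega)_{kj}=\sum_\ell b_{k\ell}\,\omega_{\ell j}=0$.

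From here both directions are short. First I would prove necessity: if the bracket is compatible, then for each mutable direction $k\in[n]$ the adjacent cluster $\tilde{\mathbf{x}}_k$ is log canonical, so the computation above forces $(\tilde{B}\Omega)_{kj}=0$ for all $j\neq k$; letting $k$ range over $[n]$ kills every off-diagonal entry in the first $n$ columns and every entry in the last $m$ columns of $\tilde{B}\Omega$, which is exactly $\tilde{B}\Omega^{\tilde{\mathbf{x}}}=[D\ 0]$ with $D$ diagonal. For sufficiency I would show that the block form $[D\ 0]$ is invariant under simultaneous mutation of the pair $(\tilde{B},\Omega)$ via the identities $\tilde{B}\mapsto E_k\tilde{B}F_k$, $\Omega\mapsto F_k^T\Omega F_k$ of \cite{GSV1}; together with the base case (the given log canonicality of $\tilde{\mathbf{x}}$ and the one-step computation) this propagates log canonicality to every seed by induction on mutation distance, yielding compatibility.

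The step I expect to be the main obstacle is this converse: verifying that the transform $F_k^T\Omega F_k$ is genuinely the coefficient matrix of the mutated cluster (not merely a formal congruence) and that the operation carries $[D\ 0]$ with $D$ diagonal to another $[D'\ 0]$ with $D'$ diagonal. Both points are exactly where the maximal-rank hypothesis $\rank\tilde{B}=n$ enters, and both are established in Theorem~1.4 of \cite{GSV1}; once they are granted, the proposition drops out as an immediate corollary.
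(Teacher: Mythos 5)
Your proposal is correct and takes essentially the same route as the paper: the paper gives no independent argument, stating the proposition as an immediate corollary of Theorem~1.4 in \cite{GSV1}, which is precisely the theorem you invoke, and your one-step log-canonicality computation together with the mutation identities $\tilde{B}\mapsto E_k\tilde{B}F_k$, $\Omega\mapsto F_k^{T}\Omega F_k$ is a faithful sketch of that theorem's standard proof. The only point you elide --- that the congruence really carries $\left[D\ 0\right]$ to $\left[D'\ 0\right]$, which rests on the observation that skew-symmetry of $\tilde{B}\Omega\tilde{B}^{T}$ forces $D$ to commute with the principal part $B$ --- is exactly what you explicitly defer to \cite{GSV1}, matching the paper's own level of detail.
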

A Lie group $\mathcal{G}$ with a Poisson bracket $\{\cdot,\cdot\}$
is called a \emph{Poisson--Lie group} if the multiplication map $\mu:\mathcal{G}\times\mathcal{G}\to\mathcal{G}$,
$\mu:(x,y)\mapsto xy$ is Poisson. That is, $\mathcal{G}$ with a
Poisson bracket $\{\cdot,\cdot\}$ is a Poisson--Lie group if 
\[
\{f_{1},f_{2}\}(xy)=\{\rho_{y}f_{1},\rho_{y}f_{2}\}(x)+\{\lambda_{x}f_{1},\lambda_{x}f_{2}\}(y),
\]
 where $\rho_{y}$ and $\lambda_{x}$ are, respectively, right and
left translation operators on $\mathcal{G}$.

A Poisson--Lie bracket on $SL_{n}$ can be extended to one on $GL_{n},$
with the determinant being a Casimir function. It will sometimes be
easier to discuss $GL_{n}$, and any statement can be restricted to
$SL_{n}$ by removing the determinant function.

Given a Lie group $\mathcal{G}$ with a Lie algebra $\mathfrak{g}$,
let $(\ ,\ )$ be a nondegenerate bilinear form on $\mathfrak{g}$,
and $\mathfrak{t}\in\mathfrak{g}\otimes\mathfrak{g}$ be the corresponding
Casimir element. For an element $r=\sum_{i}a_{i}\otimes b_{i}\in\mathfrak{g}\otimes\mathfrak{g}$
denote 
\[
\left[\left[r,r\right]\right]=\sum_{i,j}\left[a_{i},a_{j}\right]\otimes b_{i}\otimes b_{j}+\sum_{i,j}a_{i}\otimes\left[b_{i},a_{j}\right]\otimes b_{j}+\sum_{i,j}a_{i}\otimes a_{j}\otimes\left[b_{i},b_{j}\right]
\]
 and $r^{21}=\sum_{i}b_{i}\otimes a_{i}$.

The \emph{Classical Yang--Baxter equation (CYBE) }is 
\begin{equation}
\left[\left[r,r\right]\right]=0,\label{eq:CYBE}
\end{equation}
 an element $r\in\mathfrak{g}\otimes\mathfrak{g}$ that satisfies
\eqref{eq:CYBE} together with the condition 
\begin{equation}
r+r^{21}=\mathfrak{t}
\end{equation}
 is called a classical R-matrix.

A classical R-matrix $r$ induces a Poisson-Lie structure on $\mathcal{G}$:
choose a basis $\left\{ I_{\alpha}\right\} $ in $\mathfrak{g}$,
and denote by $\partial_{\alpha}$ (resp., $\partial'_{\alpha}$)
the left (resp., right) invariant vector field whose value at the
unit element is $I_{\alpha}$. Let $r=\sum_{\alpha,\beta}I_{\alpha}\otimes I_{\beta}$,
then 
\begin{equation}
\{f_{1},f_{2}\}_{r}=\sum_{\alpha,\beta}r_{\alpha,\beta}\left(\partial_{\alpha}f_{1}\partial_{\beta}f_{2}-\partial_{\alpha}^{\prime}f_{1}\partial_{\beta}^{\prime}f_{2}\right)\label{eq:sklnPB}
\end{equation}
 defines a Poisson bracket on $\mathcal{G}$. This is called the \emph{Sklyanin
bracket} corresponding to $r$.

In \cite{BDsolCYBE} Belavin and Drinfeld give a classification of
classical R-matrices for simple complex Lie groups: let $\mathfrak{g}$
be a simple complex Lie algebra with a fixed nondegenerate invariant
symmetric bilinear form $(\ ,\ )$. Fix a Cartan subalgebra $\mathfrak{h}$,
a root system $\Phi$ of $\mathfrak{g}$, and a set of positive roots
$\Phi^{+}$. Let $\Delta\subseteq\Phi^{+}$ be a set of positive simple
roots.

A Belavin--Drinfeld (BD) triple is two subsets $\Gamma_{1},\Gamma_{2}\subset\Delta$
and an isometry $\gamma:\Gamma_{1}\to\Gamma_{2}$ with the following
property: for every $\alpha\in\Gamma_{1}$ there exists $m\in\mathbb{N}$
such that $\gamma^{j}(\alpha)\in\Gamma_{1}$ for $j=0,\ldots,m-1$,
but $\gamma^{m}(\alpha)\notin\Gamma_{1}$. The isometry $\gamma$
extends in a natural way to a map between root systems generated by
$\Gamma_{1},\Gamma_{2}$. This allows one to define a partial ordering
on the root system: $\alpha\prec\beta$ if $\beta=\gamma^{j}\left(\alpha\right)$
for some $j\in\mathbb{N}$.

Select now root vectors $E_{\alpha}\in\mathfrak{g}$ that satisfy
$\left(E_{\alpha},E_{-\alpha}\right)=1$. According to the Belavin--Drinfeld
classification, the following is true (see, e.g., \cite[Ch. 3]{chriprsly}). 
\begin{prop}
(i) Every classical R-matrix is equivalent (up to an action of $\sigma\otimes\sigma$
where $\sigma$ is an automorphism of $\mathfrak{g}$) to 
\begin{equation}
r=r_{0}+\sum_{\alpha\in\Phi^{+}}E_{-\alpha}\otimes E_{\alpha}+\sum_{\begin{subarray}{c}
\alpha\prec\beta\\
\alpha,\beta\in\Phi^{+}
\end{subarray}}E_{-\alpha}\wedge E_{\beta}\label{eq:RmtxCons}
\end{equation}
 (ii) $r_{0}\in\mathfrak{h}\otimes\mathfrak{h}$ in \eqref{eq:RmtxCons}
satisfies 
\begin{equation}
\left(\gamma\left(\alpha\right)\otimes\Id\right)r_{0}+\left(\Id\otimes\alpha\right)r_{0}=0\label{eq:r0Cond1}
\end{equation}
 for any $\alpha\in\Gamma_{1}$, and 
\begin{equation}
r_{0}+r_{0}^{21}=\mathfrak{t}_{0},\label{eq:r0Cond2}
\end{equation}
 where $\mathfrak{t}_{0}$ is the $\mathfrak{h}\otimes\mathfrak{h}$
component of $\mathfrak{t}$. \\
(iii) Solutions $r_{0}$ to \eqref{eq:r0Cond1},\eqref{eq:r0Cond2}
form a linear space of dimension $k_{T}=\left|\Delta\setminus\Gamma_{1}\right|$. 
\end{prop}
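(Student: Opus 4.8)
The plan is to treat the three assertions in increasing order of depth, reducing (ii) and (iii) to linear algebra on the Cartan part and reserving the genuine work for (i). Throughout I would identify $\mathfrak{h}\cong\mathfrak{h}^{*}$ through the form $(\ ,\ )$, so that an element $r_{0}\in\mathfrak{h}\otimes\mathfrak{h}$ is the same datum as a bilinear form $B$ on $\mathfrak{h}^{*}$, or, after raising one index, as an operator $R\in\End(\mathfrak{h}^{*})$ with $B(\mu,\nu)=(R\mu,\nu)$. Under this dictionary both Cartan conditions \eqref{eq:r0Cond1} and \eqref{eq:r0Cond2} become operator identities, which is what makes the final dimension count transparent.

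For (i) I would follow the structural argument of Belavin--Drinfeld. Since $r+r^{21}=\mathfrak{t}$ is the fixed ad-invariant Casimir, $r$ is determined by its skew part $a=\tfrac12(r-r^{21})\in\wedge^{2}\mathfrak{g}$, and \eqref{eq:CYBE} becomes a quadratic (modified CYBE) constraint on $a$ whose right-hand side is ad-invariant. The heart of the matter is to analyse the support of $a$ on the root spaces: writing the off-Cartan part as $\sum c_{\alpha\beta}\,E_{\alpha}\otimes E_{\beta}$, the Jacobi-type identities forced by \eqref{eq:CYBE} show, after conjugating by a suitable automorphism $\sigma$, that the nonzero coefficients are supported exactly on the pairs $\alpha\prec\beta$ and are constant there. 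This produces the two root sums in \eqref{eq:RmtxCons}. This is the deep, combinatorial step, and is where essentially all the difficulty lies; I would import it from \cite{chriprsly} rather than reprove it from scratch.

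Granting the normal form \eqref{eq:RmtxCons}, part (ii) is a direct computation. I substitute \eqref{eq:RmtxCons} into \eqref{eq:CYBE} and isolate the component of $\left[\left[r,r\right]\right]$ in which exactly one tensor factor is Cartan-valued and which carries the weight of a root $\alpha\in\Gamma_{1}$. Using $[h,E_{\alpha}]=\alpha(h)E_{\alpha}$, the cross terms between $r_{0}$ and the root part $\sum_{\alpha\prec\beta}E_{-\alpha}\wedge E_{\beta}$ collapse to the single linear relation \eqref{eq:r0Cond1}; the purely Cartan component of $r+r^{21}=\mathfrak{t}$ is exactly \eqref{eq:r0Cond2}.

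Finally, for (iii) I would solve \eqref{eq:r0Cond1}--\eqref{eq:r0Cond2} as a linear system. Pairing \eqref{eq:r0Cond1} against an arbitrary $\nu\in\mathfrak{h}^{*}$ and rewriting through $R$ turns \eqref{eq:r0Cond2} into $R+R^{*}=\Id$ and \eqref{eq:r0Cond1} into $R(\alpha-\gamma\alpha)=\alpha$ for every $\alpha\in\Gamma_{1}$. Writing $R=\tfrac12\Id+A$ with $A$ skew, the inhomogeneous system becomes
\[
A(\alpha-\gamma\alpha)=\tfrac12(\alpha+\gamma\alpha),\qquad\alpha\in\Gamma_{1}.
\]
The vectors $\{\alpha-\gamma\alpha:\alpha\in\Gamma_{1}\}$ are linearly independent, and this is precisely where the no-cycle (nilpotency) condition on $\gamma$ enters, via the linear independence of the simple roots; hence $W=\spn\{\alpha-\gamma\alpha:\alpha\in\Gamma_{1}\}$ has dimension $|\Gamma_{1}|$. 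The skew-symmetry of $A$ makes the prescribed values consistent (one checks, using that $\gamma$ is an isometry, that $(\tfrac12(\alpha+\gamma\alpha),\beta-\gamma\beta)$ is skew in $\alpha,\beta$), and it pins down $A$ on $W$ together with the block coupling $W$ to $W^{\perp}$, leaving only the skew part on $W^{\perp}$ free. Thus the solution set is an affine space modeled on the skew operators of $W^{\perp}$, whose dimension is governed by $\dim W^{\perp}=k_{T}=|\Delta\setminus\Gamma_{1}|$, the quantity recorded in (iii). The only genuine obstacle is the structural step (i); once the normal form \eqref{eq:RmtxCons} is available, steps (ii) and (iii) are routine bookkeeping.
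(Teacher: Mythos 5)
First, note what you are up against: the paper contains no proof of this proposition at all. It is the Belavin--Drinfeld classification, quoted verbatim (via the papers of Gekhtman--Shapiro--Vainshtein) with a pointer to \cite[Ch.~3]{chriprsly}. Your treatment of (i) does exactly the same thing --- you explicitly import the structural normal form \eqref{eq:RmtxCons} from that reference --- so for (i) your proposal and the paper coincide in deferring the hard step, and your weight-component derivation of (ii) (extracting the mixed Cartan--root component of $\left[\left[r,r\right]\right]$ for \eqref{eq:r0Cond1}, and the $\mathfrak{h}\otimes\mathfrak{h}$ component of $r+r^{21}=\mathfrak{t}$ for \eqref{eq:r0Cond2}) is the standard computation and is fine as a sketch.

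The genuine gap is in the conclusion of (iii). Your reduction is correct as far as it goes: with $r_{0}$ encoded as $R\in\End(\mathfrak{h}^{*})$, condition \eqref{eq:r0Cond2} becomes $R+R^{*}=\Id$, condition \eqref{eq:r0Cond1} becomes $R(\alpha-\gamma\alpha)=\alpha$, a particular solution exists by your isometry/skewness consistency check, the no-cycle condition gives the linear independence of $\{\alpha-\gamma\alpha\}$, and two solutions differ by a skew operator annihilating $W=\spn\{\alpha-\gamma\alpha:\alpha\in\Gamma_{1}\}$, hence by a skew operator of $W^{\perp}$ (skewness pushes the image into $W^{\perp}$; note $W\cap W^{\perp}=0$ since the form is definite on the real span of roots). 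But that makes the solution set an \emph{affine} space modeled on $\wedge^{2}W^{\perp}\cong\wedge^{2}\mathfrak{h}_{T}$, of dimension $\binom{k_{T}}{2}=k_{T}(k_{T}-1)/2$ --- not $k_{T}$. Your closing phrase ``whose dimension is governed by $\dim W^{\perp}=k_{T}$'' silently substitutes the dimension of $W^{\perp}$ for the dimension of the space of skew operators on it; the two agree only when $k_{T}=3$. Concretely, for the trivial triple on $\mathfrak{sl}_{2}$ the two conditions force $r_{0}=\tfrac{1}{2}\mathfrak{t}_{0}$ uniquely (solution set of dimension $0$) while $k_{T}=1$; for the trivial triple on $\mathfrak{sl}_{n}$ the solutions are $\tfrac{1}{2}\mathfrak{t}_{0}+\wedge^{2}\mathfrak{h}$, of dimension $\binom{n-1}{2}$, while $k_{T}=n-1$. (The solution set is also not a linear space, since it does not contain $0$.) So as written your proposal does not establish assertion (iii) in the form stated; what your computation genuinely delivers, and what the rest of the paper actually uses, is the identification $W^{\perp}\cong\mathfrak{h}_{T}$ with $\dim\mathfrak{h}_{T}=k_{T}$, which is the quantity controlling the torus $\mathcal{H}_{T}$ and the $2k_{T}$ stable variables. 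You should either reconcile your count with the stated claim or flag the statement itself as a loose quotation from the literature, rather than paper over the mismatch with ``governed by.''
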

Two classical R-matrices of the form \eqref{eq:RmtxCons} that are
associated with the same BD triple are said to belong to the same
Belavin--\emph{Drinfeld class. }The corresponding bracket defined
in~\eqref{eq:sklnPB} by an R-matrix $r$ associated with a triple
$T$ will be denoted by $\{\ ,\ \}_{T}$.
\begin{rem}
\label{rem:BDIsormrph}The BD data for $SL_{n}$ will be considered
up to the following two isomorphisms: the first reverses the direction
of $\gamma$ and transposes $\Gamma_{1}$ and $\Gamma_{2}$, while
the second one takes each root $\alpha_{j}$ to $\alpha_{\omega_{0}\left(j\right)}$,
where $\omega_{0}$ is the longest element in the Weyl group (which
in $SL_{n}$ is naturally identified with the symmetric group $S_{n-1}$).
These two isomorphisms correspond to the automorphisms of $SL_{n}$
given by $X\mapsto-X^{t}$ and $X\mapsto\omega_{0}X\omega_{0}$, respectively.
Since R-matrices are considered up to an action of $\sigma\otimes\sigma$,
from here on we do not distinguish between BD triples obtained one
from the other via these isomorphisms. We will also assume that in
the map $\gamma:\alpha_{i}\mapsto\alpha_{j}$ we always have $i<j$.
\end{rem}
Slightly abusing the notation, we sometime refer to a root $\alpha_{i}\in\Delta$
just as $i,$ and write $\gamma:i\mapsto j$ instead of $\gamma:\alpha_{i}\mapsto\alpha_{j}$.
For shorter notation, denote the BD triple
$\left(\left\{ \alpha\right\} ,\left\{ \beta\right\} ,\gamma:\alpha\mapsto\beta\right)$
by $T_{\alpha\beta}$, and naturally the corresponding Sklyanin bracket
will be $\left\{ \cdot,\cdot\right\} _{\alpha\beta}$ .

Given a BD triple $T$ for $\mathcal{G}$, write 
\[
\mathfrak{h}_{T}=\left\{ h\in\mathfrak{h}:\alpha(h)=\beta(h)\text{ if }\alpha\prec\beta\right\} ,
\]
 and define the torus $\mathcal{H}_{T}=\exp\mathfrak{h}_{T}\subset\mathcal{G}$.

The following conjecture was given by Gekhtman, Shapiro and Vainshtein
in \cite{gekhtman2012cluster}:
\begin{conjecture}
\label{Conj:GSV-BD-CS}Let $\mathcal{G}$ be a simple complex Lie
group. For any Belavin--Drinfeld triple $T=(\Gamma_{1},\Gamma_{2},\gamma)$
there exists a cluster structure $\mathcal{C}_{T}$ on $\mathcal{G}$
such that 
\begin{enumerate}
\item the number of stable variables is $2k_{T}$, and the corresponding
extended exchange matrix has a full rank. \label{Conj:NumStbVar} 
\item $\mathcal{C}_{T}$ is regular.\label{Conj:CTisregular} 
\item the corresponding upper cluster algebra $\overline{\mathcal{A}}_{\mathbb{C}}(\mathcal{C}_{T})$
is naturally isomorphic to $\mathcal{O}(\mathcal{G})$; \label{Conj:A(C)eqlsO(G)} 
\item the global toric action of $(\mathbb{C}^{*})^{2k_{T}}$ on $\mathbb{C}\left(\mathcal{G}\right)$
is generated by the action of $\mathcal{H}_{T}\otimes\mathcal{H}_{T}$
on $\mathcal{G}$ given by $\left(H_{1},H_{2}\right)\left(X\right)=H_{1}XH_{2}$
; \label{conj:global-toric} 
\item for any solution of CYBE that belongs to the Belavin--Drinfeld class
specified by $T$, the corresponding Sklyanin bracket is compatible
with $\mathcal{C}_{T}$; \label{Conj:Compatible} 
\item a Poisson--Lie bracket on $\mathcal{G}$ is compatible with $\mathcal{C}_{T}$
only if it is a scalar multiple of the Sklyanin bracket associated
with a solution of CYBE that belongs to the Belavin--Drinfeld class
specified by $T$. \label{Conj:a-Poisson--Lie-bracket} 
\end{enumerate}
\end{conjecture}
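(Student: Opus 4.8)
To prove Conjecture~\ref{Conj:GSV-BD-CS} in full generality — that is, for an arbitrary simple complex Lie group $\mathcal{G}$ and an arbitrary BD triple $T=(\Gamma_1,\Gamma_2,\gamma)$ — the plan is to split the problem into a \emph{construction} phase, which produces one explicit initial seed $\Sigma_T=(\tilde{\mathbf{x}}_T,\tilde B_T)$, and a \emph{verification} phase, in which properties (1)--(6) are deduced from the two general criteria in the excerpt: Proposition~\ref{prop:PoissCompStruc} for compatibility and Proposition~\ref{prop:ACNatIsoO(V)} for the identification of $\overline{\mathcal{A}}_{\mathbb{C}}$ with $\mathcal{O}(\mathcal{G})$. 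The reductions are uniform; the work is in the construction and in two of the inputs to Proposition~\ref{prop:ACNatIsoO(V)}.

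\emph{Construction.} I would build the initial extended cluster $\tilde{\mathbf{x}}_T=(f_1,\ldots,f_{n+m})$ from regular functions on $\mathcal{G}$ that are log-canonical for $\{\cdot,\cdot\}_T$. In the trivial case these are the generalized minors attached to a reduced double Bruhat cell, extended to all of $\mathcal{G}$; for nontrivial $T$ one twists them using the isometry $\gamma$ and the weight data of $(\Gamma_1,\Gamma_2)$. The frozen part I take to consist of $2k_T$ functions, $k_T=\abs{\Delta\setminus\Gamma_1}$, split into a left and a right family indexed by a basis of $\mathfrak{h}_T$; since $\dim\mathfrak{h}_T=\abs{\Delta}-\abs{\Gamma_1}=k_T$, this already forces $m=2k_T$ in property (1). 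The exchange matrix $\tilde B_T$ is then pinned down by compatibility: with $\Omega^{\tilde{\mathbf{x}}_T}$ the computable coefficient matrix of the log-canonical initial cluster, I solve $\tilde B_T\Omega^{\tilde{\mathbf{x}}_T}=[D\ 0]$ for an integer matrix $\tilde B_T$ and a diagonal $D$, arranging the cluster so that the restricted form is nondegenerate on the symplectic leaves and hence $\rank\tilde B_T=n$ (the rest of property (1)). Proposition~\ref{prop:PoissCompStruc} then gives that $\{\cdot,\cdot\}_T$ is compatible with $\mathcal{C}(\tilde B_T)$.

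\emph{Verification.} Property (5) now holds for the chosen representative, and extends to the whole BD class because any two R-matrices in it differ only in the $\mathfrak{h}\otimes\mathfrak{h}$ term $r_0$, whose solution space is $k_T$-dimensional by the Belavin--Drinfeld classification; since each $f_i$ can be taken to have a definite bi-weight under left/right translation, the $r_0$-contribution is generated by the torus action, is automatically log-canonical, and changes $\Omega^{\tilde{\mathbf{x}}_T}$ only in directions $\tilde B_T$ sends into the frozen block, so $\tilde B_T\Omega=[D'\ 0]$ persists for every $r_0$. Property (6) is the converse half of Proposition~\ref{prop:PoissCompStruc}: intersecting the affine space $\{\Omega:\tilde B_T\Omega=[D\ 0]\}$ with the image of the Belavin--Drinfeld classification leaves exactly the scalar multiples of brackets in the class of $T$; this simultaneously gives injectivity of $T\mapsto\mathcal{C}_T$ (the bracket recovers $T$ up to the isomorphisms of Remark~\ref{rem:BDIsormrph}), hence the one-to-one correspondence. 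For properties (2) and (3) I apply Proposition~\ref{prop:ACNatIsoO(V)} with $V=\mathcal{G}$: its conditions (1)--(2) are the full-rank and regularity facts above; condition (4) is that each frozen variable, being a nonconstant regular function, vanishes somewhere on $\mathcal{G}$; condition (3) is regularity of the $n$ once-mutated variables; and condition (5) is completeness, that every regular function lies in $\varphi(\overline{\mathcal{A}}_{\mathbb{C}})$. Completeness I would obtain from the Laurent phenomenon and Lemma~\ref{lem:InAGSV}: exhibiting each coordinate function as $M_1/f_1^{m_1}=M_2/f_2^{m_2}$ with $M_i\in\overline{\mathcal{A}}$ and coprime cluster variables $f_1\neq f_2$ places it in $\overline{\mathcal{A}}$, and such functions generate $\mathcal{O}(\mathcal{G})$. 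Property (4) follows by computing the bi-weights of the $f_i$ under $X\mapsto H_1XH_2$ with $H_i\in\mathcal{H}_T$ and matching the rescaling of the frozen variables with the action of $(\mathbb{C}^*)^{2k_T}$.

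\emph{Main obstacle.} The genuine difficulty — and the reason the conjecture is open in general — lies in the construction phase together with conditions (3) and (5) of Proposition~\ref{prop:ACNatIsoO(V)}, which are \emph{not} formal. There is at present no type-free, $T$-uniform recipe that simultaneously produces regular log-canonical initial functions, a quiver whose \emph{single} mutations again yield regular (polynomial, not merely rational) functions, and a generating set amenable to the Lemma~\ref{lem:InAGSV} argument for completeness. Each solved case — standard, Cremmer--Gervais, small $SL_n$, and the minimal BD data of the companion paper — has required genuinely different combinatorics to exhibit the cluster and, above all, to control the exchange relations. Doing this outside type $A$, where generalized minors and their mutations are far less explicit, is where I expect the real work to lie.
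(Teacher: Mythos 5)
There is a genuine gap, and you have in effect named it yourself in your final paragraph: what you have written is a proof \emph{plan}, and the plan stops precisely where the proof has to begin. The statement is a conjecture, and this paper does not prove it in the stated generality either --- it proves parts (2)--(6) only for $\mathcal{G}=SL_n$ with the minimal BD data $T=(\{\alpha\},\{\beta\},\gamma)$, taking part (1) and the initial seed from the companion paper. Your skeleton does match the paper's architecture for that case: Proposition~\ref{prop:ACNatIsoO(V)} for parts (2)--(3), explicit weight computations for part (4), and reduction of parts (5)--(6) to a single representative R-matrix. But in the paper that last reduction is not your ad hoc $r_0$ computation; it is Proposition~\ref{prop:GSVReduct}, which says that assertions (5) and (6) for the whole BD class follow from assertions (1) and (4) together with compatibility for \emph{one} R-matrix in the class. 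Note the logical order this imposes: the toric-action statement (4) is an \emph{input} to (5)--(6), whereas your sketch treats (4) as an independent afterthought and tries to absorb the $r_0$-freedom directly into the coefficient matrix, which is exactly the argument Proposition~\ref{prop:GSVReduct} was introduced to package rigorously.

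Two further concrete problems. First, Proposition~\ref{prop:ACNatIsoO(V)} requires $V$ to be a Zariski open subset of $\mathbb{C}^{n+m}$; you apply it with $V=\mathcal{G}$, which is not open in affine space for any simple $\mathcal{G}$. The paper handles this by extending $\{\cdot,\cdot\}_T$ to $\Mat(n)$ with the determinant a Casimir function, which also settles condition (4) of that proposition --- your ``nonconstant regular functions vanish somewhere'' remark does not repair the hypothesis violation and has no analogue outside type $A$. Second, condition (5) (completeness) is where essentially all of the paper's mathematics lives, and nothing in your sketch produces it: the paper proves $x_{ij}\in\overline{\mathcal{A}}_{\alpha\beta}$ for every entry via (a) Lemma~\ref{lem:InAGSV} applied to two explicit presentations of $x_{n\alpha}$ and $x_{\beta n}$ (Lemma~\ref{lem:xinandxnjInA}); (b) an induction on $n$ through an explicit mutation sequence $S=S_v\circ S_h$ whose effect on every cluster variable is computed with the Desnanot--Jacobi identity \eqref{eq:DesJacId}, yielding $\overline{\mathcal{A}}_{\alpha\beta}(n-1)\subset\overline{\mathcal{A}}_{\alpha\beta}(n)$ (Proposition~\ref{prop:SeqSRdc}); and (c) a separate treatment of the boundary case $\alpha=1$, $\beta=n-1$ --- where the induction fails --- via the sequences $T_k$ (Lemma~\ref{lem:xijinA1n-1}), with the remaining cases reduced to it by the isomorphisms of Remark~\ref{rem:BDIsormrph}. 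Likewise your bi-weight claim for part (4) conceals a nontrivial constraint: for the functions $\psi_i$ the weight is well defined only because $d_\beta+d_{\alpha+1}=d_{\beta+1}+d_\alpha$ on $\mathfrak{h}_T$, verified in the paper through the Cartan-matrix computation \eqref{eq:SumWghtsab}, followed by a vertex-by-vertex balance check over ten local quiver configurations. So the verdict is: correct high-level architecture, honest acknowledgment of the obstacle, but the construction phase and conditions (3) and (5) of Proposition~\ref{prop:ACNatIsoO(V)} are not formal consequences of anything you wrote, and supplying them --- even in the single minimal-BD case --- is the entire content of this paper and its companion.
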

The conjecture was proved for the Belavin--Drinfeld class $\Gamma_{1}=\Gamma_{2}=\emptyset$.
This trivial triple corresponds to the standard Poisson--Lie bracket.
We call the cluster structures associated with the non-trivial Belavin--Drinfeld
data \emph{exotic}.

In the \emph{Cremmer--Gervais} case $\Gamma_{1}=\{\alpha_{1},\ldots,\alpha_{n-2}\}$,
$\Gamma_{2}=\{\alpha_{2},\ldots,\alpha_{n-1}\}$ and $\gamma:\alpha_{i}\mapsto\alpha_{i+1}$.
This case is, in some sense, ``the furthest'' from the standard
case, because here $\left|\Gamma_{1}\right|$ is maximal. Conjecture~\ref{Conj:GSV-BD-CS}
was proved for the Cremmer--Gervais case in \cite{gekhtman2013exotic}.
The conjecture is also true for all exotic cluster structures on $SL_{n}$
with $n\leq4$ \cite{gekhtman2012cluster} and for $SL_{5}$ \cite{eisner2015part1}.

One step towards a proof of the conjecture was this Proposition, from
\cite{gekhtman2012cluster}:
\begin{prop}
\label{prop:GSVReduct}Let $T=(\Gamma_{1},\Gamma_{2},\gamma)$ be
a BD triple. Suppose that assertions 1 and 4 of Conjecture \ref{Conj:GSV-BD-CS}
are valid and that assertion 5 is valid for one particular R-matrix
in the BD class specified by $T$ . Then 5 and 6 are valid for the
whole BD class specified by $T$.
\end{prop}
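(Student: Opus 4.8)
The plan is to reduce everything to the linear-algebraic description of compatible brackets and then exploit the toric action. Since assertion 1 gives $\rank\tilde B=n$, Proposition \ref{prop:PoissCompStruc} applies: a Poisson bracket is compatible with $\mathcal C_T$ if and only if its coefficient matrix $\Omega^{\tilde{\mathbf x}}$ in the initial extended cluster satisfies $\tilde B\,\Omega^{\tilde{\mathbf x}}=[D\ 0]$ with $D$ diagonal. In particular the set $\mathcal P$ of compatible brackets is a \emph{linear} space, so compatibility is preserved under sums and scalar multiples. This linearity is the engine of the whole argument.

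First I would record the brackets coming from the toric action. By assertion 4 the action of $\mathcal H_T\times\mathcal H_T$ via $(H_1,H_2)(X)=H_1XH_2$ is the global toric action of $\mathcal C_T$, so its weight vectors assemble into a matrix $W$ with $\tilde B W=0$ and $\rank W=2k_T=m=\dim\ker\tilde B$. For any skew matrix $\Lambda$ the bracket with coefficient matrix $\Omega=W\Lambda W^{t}$ then satisfies $\tilde B\Omega=(\tilde BW)\Lambda W^{t}=0$, the $D=0$ case of Proposition \ref{prop:PoissCompStruc}; equivalently, such a bracket is generated by the commuting scaling vector fields of the toric action and is therefore log-canonical on \emph{every} cluster. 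Call this subspace $\mathcal P_0\subseteq\mathcal P$.

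Next comes assertion 5 for the whole class. Two R-matrices $r,r'$ in the class $T$ have identical root-vector part in \eqref{eq:RmtxCons} and differ only in $r_0$; by \eqref{eq:r0Cond1}--\eqref{eq:r0Cond2} their difference $\delta=r-r'\in\mathfrak h\wedge\mathfrak h$ is an antisymmetric Cartan element satisfying the homogeneous form of \eqref{eq:r0Cond1}, and the Belavin--Drinfeld count $k_T=\abs{\Delta\setminus\Gamma_1}$ shows these $\delta$ sweep out a $k_T$-dimensional space. Since the Sklyanin bracket \eqref{eq:sklnPB} is linear in $r$, we have $\{\cdot,\cdot\}_r-\{\cdot,\cdot\}_{r'}=\{\cdot,\cdot\}_\delta$, a bracket built purely from left- and right-invariant Cartan vector fields. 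The key step, and the one I expect to be the main obstacle, is to check that the constraint \eqref{eq:r0Cond1} is exactly what forces these Cartan directions into $\mathfrak h_T$, so that $\{\cdot,\cdot\}_\delta$ is log-canonical on the initial cluster $\mathcal B_{\alpha\beta}$ with coefficient matrix of the form $W\Lambda W^{t}$; that is, $\{\cdot,\cdot\}_\delta\in\mathcal P_0$. Concretely this amounts to the statement that every cluster variable of $\mathcal B_{\alpha\beta}$ is an $\mathcal H_T\times\mathcal H_T$-semi-invariant, which is precisely the geometric content of assertion 4. Granting it, the hypothesis that $\{\cdot,\cdot\}_{r_1}$ is compatible for one particular $r_1$, together with $\{\cdot,\cdot\}_r=\{\cdot,\cdot\}_{r_1}+\{\cdot,\cdot\}_{r-r_1}$ and the linearity of $\mathcal P$, gives $\{\cdot,\cdot\}_r\in\mathcal P$ for every $r$ in the class, which is assertion 5.

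Finally, assertion 6. Let $\{\cdot,\cdot\}$ be any compatible Poisson--Lie bracket; by the Belavin--Drinfeld classification it is a scalar multiple of a Sklyanin bracket $\{\cdot,\cdot\}_{r'}$, whose root-vector part determines a BD triple $T'$. After matching the normalization of the Casimir term, the difference $\{\cdot,\cdot\}_{r'}-\{\cdot,\cdot\}_{r_1}=\{\cdot,\cdot\}_{r'-r_1}$ is again compatible, and $r'-r_1$ is an antisymmetric solution whose root-vector part is the ``$T'$ minus $T$'' contribution. The remaining point is a rigidity statement: an antisymmetric r-matrix whose Sklyanin bracket is compatible with $\mathcal C_T$ must be Cartan-valued, since a nonzero root-vector contribution would break log-canonicity on $\mathcal B_{\alpha\beta}$; this forces $T'=T$ and places $r'$ in the class. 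Combined with the bookkeeping $\dim\mathcal P_0=\binom{2k_T}{2}$ and the $k_T+1$ genuinely non-toric Poisson--Lie directions produced in the previous step, it shows that the compatible Poisson--Lie brackets are exactly the scalar multiples of the class-$T$ Sklyanin brackets, which is assertion 6. This last rigidity is the secondary difficulty; the primary one remains the identification $\{\cdot,\cdot\}_\delta\in\mathcal P_0$.
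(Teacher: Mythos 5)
Two remarks before the verdict. First, this paper does not actually prove Proposition \ref{prop:GSVReduct}: it is imported from \cite{gekhtman2012cluster}, so your attempt can only be measured against the argument given there. Your treatment of assertion 5 essentially coincides with that argument: full rank of $\tilde{B}$ makes compatibility a linear condition on the coefficient matrix via Proposition \ref{prop:PoissCompStruc}; two R-matrices in the class share the root-vector part of \eqref{eq:RmtxCons} and differ by a skew Cartan element $\delta=r_{0}-r_{0}'$; and assertion 4 makes every initial cluster variable an $\mathcal{H}_{T}\times\mathcal{H}_{T}$ semi-invariant, so $\left\{ \cdot,\cdot\right\} _{\delta}$ is log-canonical with coefficient matrix annihilated by $\tilde{B}$. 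The step you flag as the ``main obstacle'' is genuinely needed but short, and you should close it rather than defer it: skew-symmetry $\delta+\delta^{21}=0$ turns \eqref{eq:r0Cond1} into $\left(\left(\gamma(\alpha)-\alpha\right)\otimes\Id\right)\delta=0$, so $\delta$, viewed as a map $\mathfrak{h}^{*}\to\mathfrak{h}$, kills $\spn\left\{ \gamma(\alpha)-\alpha:\alpha\in\Gamma_{1}\right\} $, and skewness then forces its image into the annihilator of that span, which is $\mathfrak{h}_{T}$; hence $\delta\in\wedge^{2}\mathfrak{h}_{T}$. (These differences therefore sweep out a space of dimension $\binom{k_{T}}{2}$, not $k_{T}$ as you write --- harmless here, but worth correcting.)

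For assertion 6 there is a genuine gap, and it sits in your first sentence: ``by the Belavin--Drinfeld classification it is a scalar multiple of a Sklyanin bracket.'' The BD classification describes R-matrices satisfying $r+r^{21}=\mathfrak{t}$. A general Poisson--Lie bracket on a simple complex group is only known to be coboundary (since $H^{1}(\mathfrak{g},\wedge^{2}\mathfrak{g})=0$), i.e.\ given by a skew $r$ with $\left[\left[r,r\right]\right]$ invariant, and the \emph{triangular} case $\left[\left[r,r\right]\right]=0$ is not covered by the classification you invoke. You cannot wave this case away: your own space $\mathcal{P}_{0}$ contains such brackets --- for $0\neq\delta\in\wedge^{2}\mathfrak{h}_{T}$ the bracket $\left\{ \cdot,\cdot\right\} _{\delta}$ is Poisson--Lie, triangular, and, by your own toric-action argument, compatible --- so the very first reduction of your assertion-6 argument fails for exactly the brackets your construction produces, and a correct proof must confront them directly (for instance by linearizing the bracket at the identity and playing the resulting cobracket against log-canonicity of the initial cluster, which is how one actually pins down both the root part and the triple). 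Your subsequent ``rigidity statement'' --- that a compatible skew r-matrix must be Cartan-valued because a nonzero root-vector contribution would break log-canonicity on $\mathcal{B}_{\alpha\beta}$ --- is precisely the hard content of assertion 6 and is asserted, not proved; and the closing bookkeeping ($\binom{2k_{T}}{2}$ toric directions plus ``$k_{T}+1$ genuinely non-toric directions'') is not an argument: the non-toric freedom inside the class is the scalar together with $\wedge^{2}\mathfrak{h}_{T}$, and in any case a dimension count of this kind cannot by itself exclude compatible Poisson--Lie brackets outside the class. In summary: your proof of assertion 5 is sound modulo the two-line computation supplied above, but your proof of assertion 6 is incomplete at its foundational step.
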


\section{Results}

For $SL_{n}$ with any $n\ge3$ and BD data $\{\alpha\}\mapsto\{\beta\}$,
part \ref{Conj:NumStbVar} of conjecture \ref{Conj:GSV-BD-CS} was
proved in \cite{eisner2015part1}. This paper completes the proof
by showing that parts \ref{Conj:CTisregular} -- \ref{Conj:a-Poisson--Lie-bracket}
are also true. 
\begin{thm}
For any $n\ge3$ and BD data $T=(\{\alpha\},\{\beta\},\alpha\mapsto\beta)$,
the upper cluster algebra $\overline{\mathcal{A}}_{\mathbb{C}}(\mathcal{C}_{T})$
on $SL_{n}$ is naturally isomorphic to $\mathcal{O}(SL_{n})$.\end{thm}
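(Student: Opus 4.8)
The plan is to verify the five hypotheses of Proposition \ref{prop:ACNatIsoO(V)} with $V=SL_n$, identified via the transcendence basis furnished by the initial extended cluster with a Zariski open subset of $\mathbb{C}^{\dim SL_n}$. Three of the five are already in hand. The full-rank requirement \ref{prop:ACNIOVCond1} is exactly part \ref{Conj:NumStbVar} of Conjecture \ref{Conj:GSV-BD-CS}, proved in \cite{eisner2015part1}; the regularity of the initial extended cluster $\mathcal{B}_{\alpha\beta}$ and of all of its one-step mutations---hypotheses \ref{prop:ACNIOVCond2} and \ref{prop:ACNIOVCond3}---is precisely the statement that the initial seed is locally regular, also established in \cite{eisner2015part1}. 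Thus only \ref{prop:ACNIOVCond4} and \ref{prop:ACNIOVCond5} call for new arguments.

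Hypothesis \ref{prop:ACNIOVCond4} should be routine: each stable variable is an explicit regular function---a minor, or a determinant assembled from the Belavin--Drinfeld data---of the generic matrix $X\in SL_n$, and for each one I would exhibit a point of $V$ at which that function vanishes.

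The real content is hypothesis \ref{prop:ACNIOVCond5}, that every regular function on $SL_n$ lies in $\varphi(\overline{\mathcal{A}}_{\mathbb{C}}(\mathcal{C}_T))$. Because $\mathcal{O}(SL_n)$ is generated as a $\mathbb{C}$-algebra by the matrix entries $x_{ij}$ and $\overline{\mathcal{A}}_{\mathbb{C}}$ is an algebra, it is enough to show $x_{ij}\in\overline{\mathcal{A}}_{\mathbb{C}}$ for all $i,j$. Every cluster variable lies in $\mathcal{A}_{\mathbb{C}}\subseteq\overline{\mathcal{A}}_{\mathbb{C}}$, so any polynomial in cluster variables is available as a member of $\overline{\mathcal{A}}_{\mathbb{C}}$. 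The tool is Lemma \ref{lem:InAGSV}: to conclude $x_{ij}\in\overline{\mathcal{A}}_{\mathbb{C}}$ it suffices to write $x_{ij}=M_1/f_1^{m_1}=M_2/f_2^{m_2}$ with $M_1,M_2\in\overline{\mathcal{A}}_{\mathbb{C}}$ and coprime cluster variables $f_1\neq f_2$. I would begin with the minors occurring among the initial cluster variables, recover the boundary matrix entries as ratios of neighboring minors, and then propagate inward via short Pl\"{u}cker and Dodgson-type (Desnanot--Jacobi) identities; each such identity expresses $x_{ij}$ times a cluster variable as a polynomial in functions already in $\overline{\mathcal{A}}_{\mathbb{C}}$, yielding one representation, while a second representation with a coprime denominator is read off from a suitably mutated cluster.

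The main obstacle is to carry this through uniformly in the exotic directions of the seed. In the rows and columns attached to the roots $\alpha$ and $\beta$ the initial cluster contains the Belavin--Drinfeld-specific functions in place of ordinary minors, and the clean minor identities no longer apply there; the heart of the argument will be to locate, for the matrix entries in those positions, a correct pair of coprime cluster variables $f_1,f_2$---characteristically one drawn from the initial seed and one produced by a mutation prescribed by the quiver $Q_{\alpha\beta}$---so that Lemma \ref{lem:InAGSV} can still be invoked. Once all $x_{ij}\in\overline{\mathcal{A}}_{\mathbb{C}}$, hypothesis \ref{prop:ACNIOVCond5} holds, all five conditions of Proposition \ref{prop:ACNatIsoO(V)} are met, and the natural isomorphism $\overline{\mathcal{A}}_{\mathbb{C}}(\mathcal{C}_T)\cong\mathcal{O}(SL_n)$ follows.
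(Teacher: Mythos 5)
Your reduction coincides with the paper's: invoke Proposition \ref{prop:ACNatIsoO(V)}, obtain conditions \ref{prop:ACNIOVCond1}--\ref{prop:ACNIOVCond3} from \cite{eisner2015part1}, and reduce condition \ref{prop:ACNIOVCond5} to showing $x_{ij}\in\overline{\mathcal{A}}_{\mathbb{C}}(\mathcal{C}_T)$ for all $(i,j)$, with Lemma \ref{lem:InAGSV} as the tool. But precisely at the point you yourself flag as ``the main obstacle'' the proposal stops, and that obstacle is the entire content of the theorem, so this is a genuine gap rather than an alternative route. Your plan---recover the boundary entries, then propagate inward with Desnanot--Jacobi identities, taking one representation from the initial seed and a second, coprime one from ``a mutation prescribed by the quiver''---does not go through as stated: for an interior entry the second representation needs a denominator such as $f_{i+1,j+1}^{(2)}$ (the minor with trailing rows and columns deleted), and such functions are not produced by single mutations of the initial seed; they appear as cluster variables only at the end of long composite mutation sequences, and proving that they do, with the exotic vertices $(\alpha+1,1)$ and $(1,\beta+1)$ interleaved in the mutation paths, is where essentially all the work of the paper lies.

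The paper's actual mechanism is structurally different from in-place propagation, and none of its three main devices appears in your proposal: (i) Lemma \ref{lem:xinandxnjInA} handles the last row and column by hand (this part does match your coprime-pair plan, including the exotic entries $x_{n\alpha}$ and $x_{\beta n}$); (ii) for $\beta\neq n-1$, Proposition \ref{prop:SeqSRdc} builds sequences $\sigma_h,\sigma_v$ after which a subquiver isomorphic to $Q_{\alpha\beta}(n-1)$ emerges carrying the functions $\varphi_{ij}$ of $SL_{n-1}$, giving $\overline{\mathcal{A}}_{\alpha\beta}(n-1)\subset\overline{\mathcal{A}}_{\alpha\beta}(n)$ and hence induction on $n$, so interior entries come for free from the inductive hypothesis; (iii) the case $\beta=n-1$, $\alpha\neq1$ is reduced to $1\mapsto n-\alpha$ via the BD isomorphisms of Remark \ref{rem:BDIsormrph}; and (iv) only the residual case $\alpha=1,\beta=n-1$ is treated by inward propagation of your kind (Lemma \ref{lem:xijinA1n-1}), and even there it rests on the sequences $T_m$ of Propositions \ref{prop:SeedS1}--\ref{prop:Seed>2}, which establish that the functions $f_{i-m,j-m}^{(m)}$---in particular all $x_{ij}$ with $i<j<n$---are genuine cluster variables. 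A smaller but real issue: Proposition \ref{prop:ACNatIsoO(V)} requires $V$ to be Zariski open in affine space, and $SL_n$ is a hypersurface, not an open subset of $\mathbb{C}^{\dim SL_n}$; the paper handles this by extending the bracket to $\Mat(n)$ with the determinant a Casimir and restricting, which simultaneously disposes of condition \ref{prop:ACNIOVCond4}---your plan treats that condition by exhibiting vanishing points directly, which is plausible but left unsubstantiated.
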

\begin{proof}
The proof relies on a Proposition \ref{prop:ACNatIsoO(V)}. Conditions
\ref{prop:ACNIOVCond1}, \ref{prop:ACNIOVCond2} and \ref{prop:ACNIOVCond3}
of the proposition hold, according to \cite{eisner2015part1}. To
satisfy Condition \ref{prop:ACNIOVCond4}, note that the Poisson bracket
$\left\{ \cdot,\cdot\right\} _{T}$ on $SL_{n}$ can be extended to
$\Mat(n)$ by requiring that the determinant is a Casimir function.
Clearly, on $\Mat(n)$ Condition \ref{prop:ACNIOVCond4} holds. Last,
we need that for any $n\ge3$ and BD data $T=(\{\alpha\},\{\beta\},\alpha\mapsto\beta)$,
all regular functions on $SL_{n}$ are in $\overline{\mathcal{A}}_{\mathbb{C}}(\mathcal{C}_{T})$.
To prove this, it suffices to show that for all $(i,j)\in[n]\times[n]$
the function $x_{ij}$ belongs to $\overline{\mathcal{A}}_{\mathbb{C}}(\mathcal{C}_{T})$.
This is true according to Theorem \ref{thm:xijbelongstoA}, and therefore
$\overline{\mathcal{A}}_{\mathbb{C}}(\mathcal{C}_{T})$ is naturally
isomorphic to $\mathcal{O}(SL_{n})$.
\end{proof}
The second result is statement \ref{conj:global-toric} of conjecture
\ref{Conj:GSV-BD-CS}:
\begin{thm}
\label{thm:The-global-toric}The global toric action of $(\mathbb{C}^{*})^{2k_{T}}$
on $\mathbb{C}\left(SL_{n}\right)$ is generated by the action of
$\mathcal{H}_{T}\otimes\mathcal{H}_{T}$ on $SL_{n}$ given by $\left(H_{1},H_{2}\right)\left(X\right)=H_{1}XH_{2}$. 
\end{thm}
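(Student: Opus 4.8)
The plan is to use the Gekhtman--Shapiro--Vainshtein description of toric actions on a cluster structure (see \cite[Ch.~2]{GSV}): a local toric action of $(\mathbb{C}^{*})^{d}$ is recorded by an integer weight matrix $W=(w_{i\alpha})$ of size $(n+m)\times d$ acting by $x_{i}\mapsto x_{i}\prod_{\alpha}t_{\alpha}^{w_{i\alpha}}$, and it is compatible with all mutations if and only if $\tilde{B}_{\alpha\beta}W=0$. Since $\rank\tilde{B}_{\alpha\beta}=n$ by part \ref{Conj:NumStbVar} of Conjecture \ref{Conj:GSV-BD-CS} (proved in \cite{eisner2015part1}), the space of admissible weights is $\ker\tilde{B}_{\alpha\beta}$, of dimension $(n+m)-n=m=2k_{T}$; hence the maximal, i.e.\ global, toric action has rank exactly $2k_{T}$, its weight vectors filling out $\ker\tilde{B}_{\alpha\beta}$. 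It therefore suffices to prove two things: that left--right multiplication by $\mathcal{H}_{T}\times\mathcal{H}_{T}$ is a toric action of the cluster structure, and that it has full rank $2k_{T}$; together these force it to coincide with the global toric action.

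First I would set up the geometric action. Since $\dim\mathfrak{h}_{T}=k_{T}$, the torus $\mathcal{H}_{T}\times\mathcal{H}_{T}$ is isomorphic to $(\mathbb{C}^{*})^{2k_{T}}$, and because $\mathcal{H}_{T}\subset SL_{n}$ the map $(H_{1},H_{2}):X\mapsto H_{1}XH_{2}$ preserves $SL_{n}$. As $\mathcal{H}_{T}$ lies in the diagonal torus, each matrix entry satisfies $x_{ij}(H_{1}XH_{2})=(H_{1})_{ii}(H_{2})_{jj}\,x_{ij}(X)$, so every function built from entries and minors---in particular every element of the initial extended cluster $\mathcal{B}_{\alpha\beta}$---is a weight vector for the action. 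Reading off these characters from the explicit construction in \cite{eisner2015part1} produces an integer weight matrix $W^{\mathrm{geom}}$ of size $(n+m)\times 2k_{T}$.

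The main step, and the principal obstacle, is to verify $\tilde{B}_{\alpha\beta}W^{\mathrm{geom}}=0$. Concretely this asks that for every mutable index $k$ the exchange relation at $k$ be homogeneous under $\mathcal{H}_{T}\times\mathcal{H}_{T}$, i.e.\ that the monomials $\prod_{b_{kj}>0}f_{j}^{b_{kj}}$ and $\prod_{b_{kj}<0}f_{j}^{-b_{kj}}$ carry the same character. Each one--step mutation $f_{k}'$ is already known to be a regular function (Condition \ref{prop:ACNIOVCond3} of Proposition \ref{prop:ACNatIsoO(V)}, established in \cite{eisner2015part1}); applying the algebra automorphism induced by $(H_{1},H_{2})$ to the identity $f_{k}f_{k}'=\prod_{b_{kj}>0}f_{j}^{b_{kj}}+\prod_{b_{kj}<0}f_{j}^{-b_{kj}}$ shows that $f_{k}'$ is a weight vector precisely when the two monomial characters agree, which is the $k$-th row of $\tilde{B}_{\alpha\beta}W^{\mathrm{geom}}=0$. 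This reduces to a finite check governed by the combinatorics of the triple $\{\alpha\}\mapsto\{\beta\}$: one matches the arrows of the quiver $Q_{\alpha\beta}$ against the balancing of the left and right diagonal weights of the functions in $\mathcal{B}_{\alpha\beta}$. I expect this bookkeeping---rather than any conceptual difficulty---to be the hard part, since it must be carried out uniformly in $n$, $\alpha$ and $\beta$. Once it is done, compatibility with all further mutations is automatic, so $\mathcal{H}_{T}\times\mathcal{H}_{T}$ acts as a toric action on the whole cluster structure.

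Finally I would check that $W^{\mathrm{geom}}$ has rank $2k_{T}$, equivalently that no nontrivial $(H_{1},H_{2})\in\mathcal{H}_{T}\times\mathcal{H}_{T}$ fixes every function of $\mathcal{B}_{\alpha\beta}$; this holds because enough entry-- and minor--type functions occur in the initial cluster to recover all the (constrained) diagonal entries of $H_{1}$ and $H_{2}$. Its columns lie in $\ker\tilde{B}_{\alpha\beta}$, which has dimension exactly $2k_{T}$, so full column rank forces them to span the kernel; checking that they in fact form a basis of $\ker\tilde{B}_{\alpha\beta}\cap\mathbb{Z}^{n+m}$ upgrades this to an equality of groups. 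Hence the action of $\mathcal{H}_{T}\times\mathcal{H}_{T}$ by $X\mapsto H_{1}XH_{2}$ generates the global toric action of $(\mathbb{C}^{*})^{2k_{T}}$ on $\mathbb{C}(SL_{n})$, which is the assertion of the theorem.
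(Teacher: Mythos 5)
Your overall framework is the same as the paper's: the paper reduces the theorem (citing \cite{gekhtman2012cluster}) to exactly the three conditions you describe --- each initial extended cluster variable is a two-sided weight vector with weights $\eta_{i},\zeta_{i}\in\mathfrak{h}_{T}^{*}$, the weights span $\mathfrak{h}_{T}^{*}$, and $\sum_{j}b_{ij}\eta_{j}=\sum_{j}b_{ij}\zeta_{j}=0$ for every mutable row --- which is your ``$\tilde{B}W^{\mathrm{geom}}=0$ plus full rank forces spanning of the kernel'' argument in different clothing. The problem is that at the two places where the actual content lies, your proposal either asserts something false or defers the work.

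The genuine gap is your claim that, because each entry satisfies $x_{ij}(H_{1}XH_{2})=(H_{1})_{ii}(H_{2})_{jj}x_{ij}(X)$, ``every function built from entries and minors --- in particular every element of $\mathcal{B}_{\alpha\beta}$ --- is a weight vector.'' This is not true, and if it were true for all diagonal $H_{1},H_{2}$ it would make the full maximal torus $(\mathbb{C}^{*})^{2(n-1)}$ act torically, contradicting your own kernel-dimension count of $2k_{T}$. The special initial variables attached to $(i,j)$ with $j=n+i-\beta$ (resp.\ $i=n+j-\alpha$) are determinants of the matrices $\tilde{M}_{ij}$, which expand as $\psi_{i}=f_{ij}f_{\alpha+1,1}-f_{ij}^{\downarrow}f_{\alpha+1,1}^{\uparrow}$ (resp.\ $\theta_{j}=f_{ij}f_{1,\beta+1}-f_{ij}^{\rightarrow}f_{1,\beta+1}^{\leftarrow}$): differences of products of minors taken over \emph{different} row (resp.\ column) sets. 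For $h=\diag(d_{1},\ldots,d_{n})$ the two terms of $\psi_{i}$ scale with different characters unless
\begin{equation*}
d_{\beta}+d_{\alpha+1}=d_{\beta+1}+d_{\alpha},
\end{equation*}
which is exactly the paper's condition \eqref{eq:SumWghtsab}, i.e.\ exactly the constraint $\alpha(h)=\beta(h)$ defining $\mathfrak{h}_{T}$; the paper has to prove this compatibility via the Cartan-matrix computation $BT=[C]_{\alpha}-[C]_{\beta}$, $Bh=Av=0$. This is the one step where the Belavin--Drinfeld data enters the toric statement at all, and your proposal passes over it with an incorrect general claim. Separately, you defer the verification $\tilde{B}_{\alpha\beta}W^{\mathrm{geom}}=0$ as ``finite bookkeeping''; in the paper this is the bulk of the proof --- using the relation $\eta_{m}=\eta_{m'}+d_{i}+\cdots+d_{k-1}$ for variables on a common diagonal, it checks weight balance at ten types of mutable vertices, including the nonstandard ones $(n,\alpha)$, $(n,\alpha+1)$, $(\beta,n)$, $(\beta+1,n)$, $(1,\beta+1)$, $(\alpha+1,1)$, where \eqref{eq:SumWghtsab} is again needed. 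So your plan points in the right direction, but it is a plan: the weight-vector claim must be repaired as above, and the vertex-by-vertex balance must actually be carried out.
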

which will be proved in Section \ref{sec:The-Toric-action}.

Now we can use Proposition \ref{prop:GSVReduct}, which implies that
assertions 5 and 6 of Conjecture \ref{Conj:GSV-BD-CS} are true.

\section{Construction of an initial seed\label{sec:ConstructionLCSeed}}

For a given BD data $T_{\alpha\beta}=\left(\left\{ \alpha\right\} ,\left\{ \beta\right\} ,\gamma:\alpha\mapsto\beta\right)$,
an initial seed for the cluster structure $\mathcal{C}_{\alpha\beta}$
is described in \cite{eisner2015part1}. The construction defines
a set of matrices $\mathcal{M}$ such that the set of all determinants
of these matrices forms the initial cluster. We repeat the main part
of it here:

For a matrix $X$ let $M_{ij}\left(X\right)$ be the maximal contiguous
submatrix of $X$ with $x_{ij}$ at the upper left hand corner. That
is, 
\begin{eqnarray*}
M_{ij}\left(X\right) & = & \begin{cases}
\left[\begin{array}{ccc}
x_{ij} & \cdots & x_{in}\\
\vdots &  & \vdots\\
x_{n-j+i,j} & \cdots & x_{n-j+i,n}
\end{array}\right] & \text{if }j>i\\
\\
\left[\begin{array}{ccc}
x_{ij} & \cdots & x_{i,n-i+j}\\
\vdots &  & \vdots\\
x_{nj} & \cdots & x_{n,n-i+j}
\end{array}\right] & \text{otherwise.}
\end{cases}
\end{eqnarray*}
Now define for every $(i,j)\in[n]\times[n]$,
\begin{equation}
f_{ij}(X)=\det M_{ij}(X).\label{eq:fijDefasDet}
\end{equation}

Let $X_{R}^{C}$ denote the submatrix of $X$ with rows in the set
$R$ and columns in $C$ (with $R,C\subseteq\left[n\right]$). Then
define two special families of matrices: for $1\leq j\leq\alpha$
and $i=n+j-\alpha$ set 
\[
\tilde{M}_{ij}(X)=\left[\begin{array}{cc}
X_{\left[i,n\right]}^{\left[j,\alpha+1\right]} & 0_{\left(n-i+1\right)\times\left(\mu-1\right)}\\
0_{\mu\times\left(n-i\right)} & X_{\left[1,\mu\right]}^{\left[\beta,n\right]}
\end{array}\right]
\]
with $\mu=n-\beta$, and for $1\le i\le\beta$ and $j=n+i-\beta$,
set 
\[
\tilde{M}_{ij}(X)=\left[\begin{array}{cc}
X_{\left[i,\beta+1\right]}^{\left[j,n\right]} & 0_{\left(n-j\right)\times\mu}\\
0_{\left(\mu-1\right)\times\left(n-j+1\right)} & X_{\left[\alpha,n\right]}^{\left[1,\mu\right]}
\end{array}\right],
\]
 and here $\mu=n-\alpha$. Note that these matrices are not block
diagonal: in the first case the number of columns in each of the two
blocks is greater than the number of rows by one, while in the second
case the number of rows in each block is greater than the number of
columns by one. 

Take the set of matrices $\left\{ M_{ij}\right\} _{i,j=1}^{n}$ and
whenever $i=n+j-\alpha$ or $j=n+i-\beta$ replace $M_{ij}$$\left(X\right)$
with $\tilde{M}_{ij}\left(X\right)$. This assures that for a fixed
pair $\left(i,j\right)\in[n]\times[n]$ there is still a unique matrix
in the set with $x_{ij}$ at the upper left corner. Denote this matrix
(either $M_{ij}(X)$ or $\tilde{M}_{ij}\left(X\right)$)
by $\overline{M}_{ij}$, and set 
\[
\varphi_{ij}=\det\overline{M}_{ij}.
\]
 Since $\varphi_{11}=\det X$ is constant on $SL_{n}$ we ignore it
and set the initial cluster to be 
$\mathcal{B}_{\alpha\beta}=\left\{ \varphi_{ij}|i,j\in\left[n\right]\right\} \setminus\left\{ \varphi_{11}\right\} $. 

To describe the initial quiver $Q_{\alpha\beta}$ start with the initial
quiver of the standard case on $SL_{n}$: the vertices are placed
on an $n\times n$ grid, with rows numbered from top to bottom and
columns numbered left to right. A vertex $(i,j)$ corresponds to the
cluster variable $f_{ij}$. The vertices in the first row and first
column are frozen. There are arrows from a vertex $(i,j)$ to vertices 
\begin{itemize}
\item $(i+1,j)$ if $i\neq n$
\item $(i,j+1)$ if $j\neq n$ 
\item $(i-1,j-1)$ if $i\neq1$ and $j\neq1$ 
\end{itemize}
The standard quiver for $SL_{5}$ is shown in Fig. \ref{fig:SL5StdQvr}
with squares representing frozen vertices and circles representing
mutable ones. 
\begin{figure}
\begin{centering}
\includegraphics[scale=0.45]{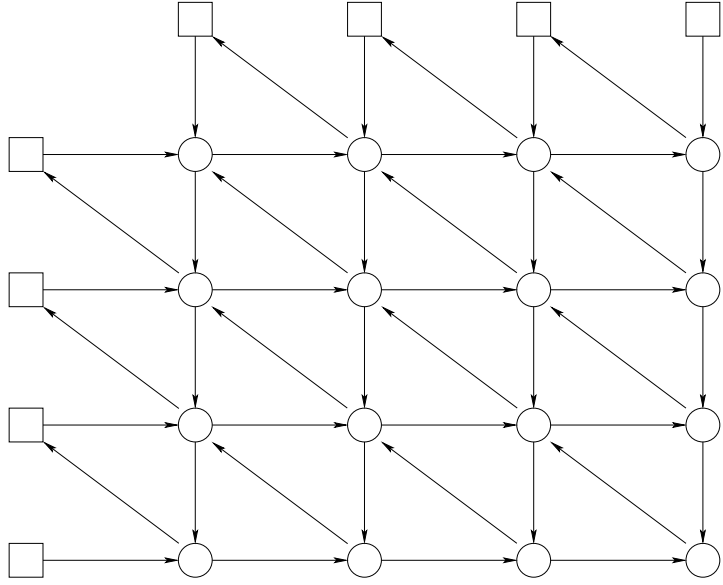} 
\end{centering}
\caption{The standard quiver for $SL_{5}$}
\label{fig:SL5StdQvr} 
\end{figure}

The quiver $Q_{\alpha\beta}(n)$ for the BD triple $T_{\alpha\beta}$
on $SL_{n}$ has similar form. It also has $n^{2}-1$ vertices on
an $n\times n$ grid, with same arrows as the standard one and the
following changes:
\begin{enumerate}
\item Vertices $\left(\alpha+1,1\right)$ and $\left(1,\beta+1\right)$
are not frozen. 
\item The arrows $\left(\alpha,1\right)\to\left(\alpha+1,1\right)$ and
$\left(1,\beta\right)\to\left(1,\beta+1\right)$ are added. 
\item The arrows $\left(n,\alpha+1\right)\to\left(1,\beta+1\right),\ \left(1,\beta+1\right)\to\left(n,\alpha\right)$
are added. 
\item The arrows $\left(\beta+1,n\right)\to\left(\alpha+1,1\right),\ \left(\alpha+1,1\right)\to\left(\beta,n\right)$
are added. 
\end{enumerate}
The example of $Q_{2\mapsto3}(5)$ on $SL_{5}$ is given in Figure
\ref{fig:SL5StdQvr}. The dashed arrows are the arrows that were added
to the standard quiver. 
\begin{figure}
\begin{centering}
\includegraphics[scale=0.45]{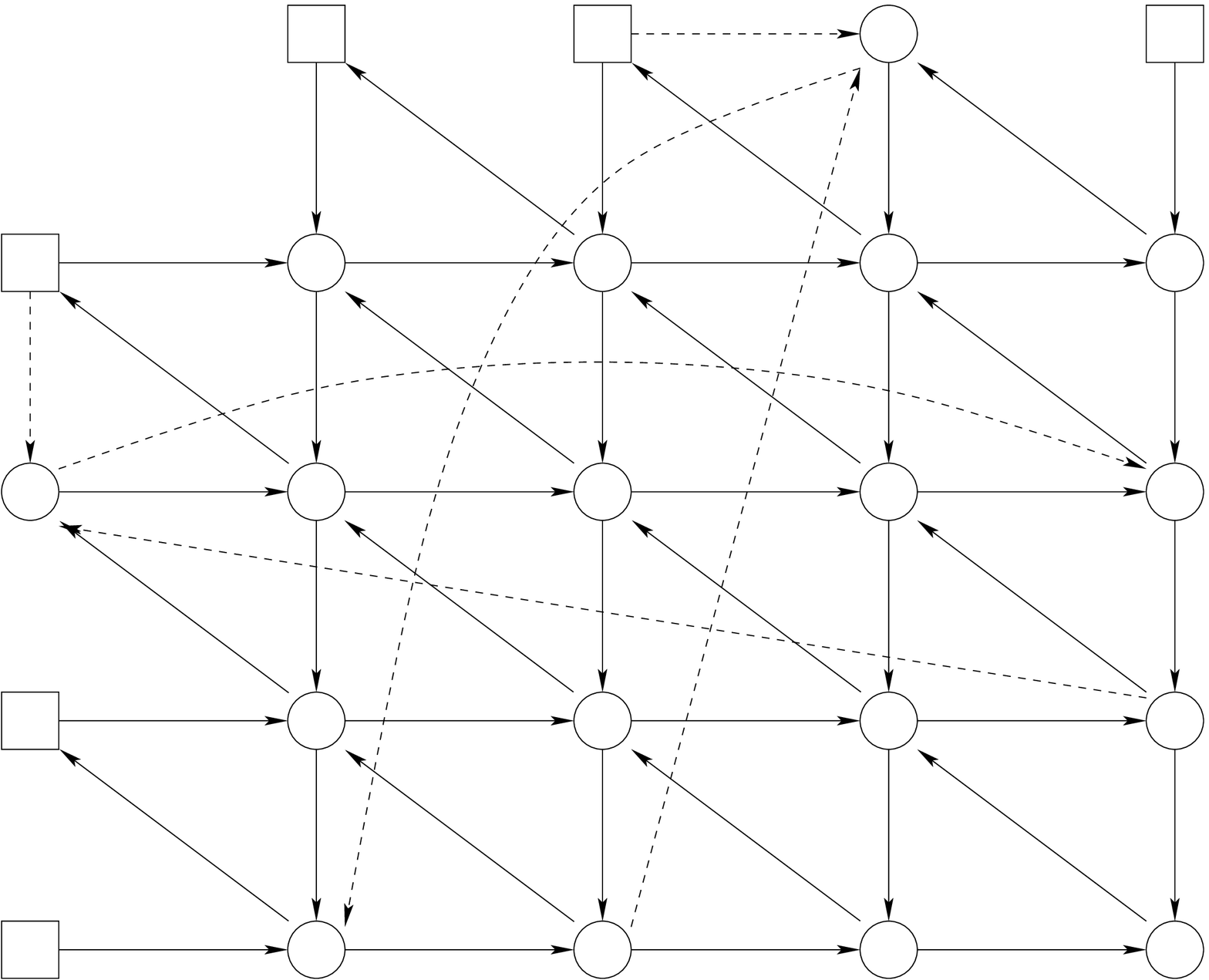} 
\par\end{centering}

\caption{The quiver $Q_{2\protect\mapsto3}$ on $SL_{5}$ }

\label{fig:SL52to3Qvr} 
\end{figure}

Associating every vertex $(i,j)$ of the quiver $Q_{\alpha\beta}$
with the cluster variable $\varphi_{ij}$ turns the pair $(\mathcal{B}_{\alpha\beta},Q_{\alpha\beta})$
into the initial seed for a cluster structure $\mathcal{C}_{\alpha\beta}$
on $SL_{n}$. This cluster structure is compatible with the bracket
$\{\cdot,\cdot\}_{\alpha\beta}$, and the
initial seed is locally regular. More
details can be found in \cite{eisner2015part1}.

In Section \ref{sec:The-natural-isomorphism} we will use the following
notations: for a function $f_{ij}=\det X_{[i,k]}^{[j,\ell]}$ we write
\begin{eqnarray*}
f_{ij}^{\rightarrow} & = & \det X_{[i,k]}^{[j,\ldots,\ell-1,\ell+1]}\\
f_{ij}^{\leftarrow} & = & \det X_{[i,k]}^{[j-1,j+1,\ldots,\ell]}\\
f_{ij}^{\uparrow} & = & \det X_{[i-1,i+1,\ldots,k}^{[j,\ell]}\\
f_{ij}^{\downarrow} & = & \det X_{[i,\ldots,k-1,k+1]}^{[j,\ell]}.
\end{eqnarray*}

\section{The natural isomorphism\label{sec:The-natural-isomorphism}}
\begin{thm}
$x_{ij}\in\overline{\mathcal{A}}_{\alpha\beta}$ for every $\left(i,j\right)\in\left[n\right]\times\left[n\right]$\label{thm:xijbelongstoA}\end{thm}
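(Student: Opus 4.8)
The plan is to establish the one remaining hypothesis of Proposition~\ref{prop:ACNatIsoO(V)} (its condition~\ref{prop:ACNIOVCond5}), namely that every matrix entry lies in the upper cluster algebra, by an induction over the positions $(i,j)$ that fills in the matrix from the boundary toward the interior. At each step I would produce a relation of the shape ``(cluster variable)$\,\cdot\,x_{ij}\in\overline{\mathcal{A}}_{\alpha\beta}$'' and then invoke Lemma~\ref{lem:InAGSV} to clear the denominator and conclude $x_{ij}\in\overline{\mathcal{A}}_{\alpha\beta}$. Throughout I use that $\overline{\mathcal{A}}_{\alpha\beta}$, being an intersection over all seeds, contains every cluster variable produced by every mutation sequence starting from the initial seed $(\mathcal{B}_{\alpha\beta},Q_{\alpha\beta})$.

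For the base of the induction I would record which entries are literally initial cluster variables. Since $M_{in}$ and $M_{nj}$ are $1\times1$, we have $x_{in}=\varphi_{in}$ and $x_{nj}=\varphi_{nj}$, hence these boundary entries lie in $\overline{\mathcal{A}}_{\alpha\beta}$, with the sole exceptions of $(\beta,n)$ and $(n,\alpha)$, where the trailing minor is replaced by the block determinant $\det\tilde{M}_{ij}$. More generally, the two short antidiagonal segments $\{(n+j-\alpha,j):1\le j\le\alpha\}$ and $\{(i,n+i-\beta):1\le i\le\beta\}$ carrying the $\tilde{M}$ variables are exactly the places where the construction deviates from the standard one, and I would treat the entries along them separately, using the explicit block form of $\tilde{M}_{ij}$.

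For a generic interior position $(i,j)$, writing $f_{ij}=\det X_{[i,k]}^{[j,\ell]}$ and expanding along the first row (equivalently, taking a Schur complement with respect to the trailing block $M_{i+1,j+1}$) gives a relation
\begin{equation*}
x_{ij}\,f_{i+1,j+1}=f_{ij}-\bigl(\text{terms in } x_{ic}\ (c>j),\ x_{rj}\ (r>i),\ \text{and shifted minors}\bigr),
\end{equation*}
so that if the induction processes entries below and to the right first, and if the auxiliary shifted minors $f^{\rightarrow},f^{\leftarrow},f^{\uparrow},f^{\downarrow}$ have been identified with elements of $\overline{\mathcal{A}}_{\alpha\beta}$, then the right-hand side lies in $\overline{\mathcal{A}}_{\alpha\beta}$ and $x_{ij}=M_{1}/f_{i+1,j+1}$ with $M_{1}\in\overline{\mathcal{A}}_{\alpha\beta}$. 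A dual expansion (along the last row or column, or against a neighbouring trailing minor) yields a second representation $x_{ij}=M_{2}/f'$ whose denominator $f'$ is a cluster variable coprime to $f_{i+1,j+1}$, and Lemma~\ref{lem:InAGSV} then upgrades these to $x_{ij}\in\overline{\mathcal{A}}_{\alpha\beta}$.

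I expect the real work to lie in two places. First, one must show that the auxiliary shifted minors $f^{\rightarrow}_{ij},f^{\leftarrow}_{ij},f^{\uparrow}_{ij},f^{\downarrow}_{ij}$ are themselves in $\overline{\mathcal{A}}_{\alpha\beta}$; the most natural route is to exhibit each of them as a cluster variable obtained by an explicit mutation sequence from the initial seed. Second, and more delicately, the induction must be carried across the BD defect, where the relevant initial variables are the block determinants $\det\tilde{M}_{ij}$ rather than honest trailing minors. There the determinantal identity above has to be replaced by one adapted to the block structure of $\tilde{M}_{ij}$, and—this is the crux—one must verify the coprimality and distinctness hypotheses of Lemma~\ref{lem:InAGSV} precisely along the rows, columns, and antidiagonal segments that the triple $T_{\alpha\beta}$ disturbs, since it is exactly there that the two denominators furnished by the two expansions are most at risk of sharing a factor.
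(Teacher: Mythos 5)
Your outline has the right external shape---two representations of $x_{ij}$ with coprime cluster-variable denominators, then Lemma~\ref{lem:InAGSV}---but it contains a genuine gap at exactly the step you label the ``crux'': you never produce the second representation, and the mechanism you suggest for it cannot work. Expanding $f_{ij}=\det X_{[i,k]}^{[j,\ell]}$ ``along the last row or column'' does not isolate $x_{ij}$ at all, and any other determinant in which $x_{ij}$ appears with a clean cofactor must be anchored at a row $\le i$ and column $\le j$, so its expansion involves entries above and to the left of $(i,j)$, which your induction order has not yet processed. The only available source of a second denominator is a \emph{trimmed} minor such as $f^{(2)}_{i+1,j+1}$, paired with the numerator $\varphi^{(2)}_{ij}$, and these qualify for Lemma~\ref{lem:InAGSV} only because they are shown to be honest cluster variables via the long explicit mutation sequences $T_{1},T_{2},\ldots$ whose effect on the seed is computed step by step with the Desnanot--Jacobi identity \eqref{eq:DesJacId} (Propositions~\ref{prop:SeedS1}, \ref{prop:SeedS2}, \ref{prop:Seed>2}); the same remark applies to your base case, where handling $(n,\alpha)$ and $(\beta,n)$ requires realizing $f^{\leftarrow}_{1,\beta+1}$ and $f^{\leftarrow}_{2,\beta+1}$ as cluster variables (Lemma~\ref{lem:xinandxnjInA}). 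Deferring these constructions to ``the real work'' is deferring essentially the whole proof, not a verification detail: without them there is no candidate $f'$ for which your coprimality discussion can even be set up.

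You also miss the structural device by which the paper avoids running a positional induction across the BD defect for general $(\alpha,\beta)$. The actual argument is an induction on $n$: Proposition~\ref{prop:SeqSRdc} exhibits a deflation sequence $S=S_{v}\circ S_{h}$ whose result contains a subquiver isomorphic to $Q_{\alpha\beta}(n-1)$ with vertices carrying the functions $\varphi_{ij}$ of $SL_{n-1}$, giving $\overline{\mathcal{A}}_{\alpha\beta}(n-1)\subset\overline{\mathcal{A}}_{\alpha\beta}(n)$; together with the boundary Lemma~\ref{lem:xinandxnjInA} this settles every $\beta\neq n-1$, and the case $\beta=n-1$, $\alpha\neq1$ reduces to $1\mapsto n-\alpha$ by the isomorphisms of Remark~\ref{rem:BDIsormrph}. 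The positional induction you propose is needed only in the single residual case $\alpha=1$, $\beta=n-1$ (Lemma~\ref{lem:xijinA1n-1}), and even there it leans on Corollary~\ref{cor:xijinAforalljgi}, which shows $x_{ij}$ for $i<j<n$ is itself a cluster variable---again a product of the explicit sequences. So to repair your plan you would either have to carry out seed computations of the $T_{k}$ type for arbitrary $(\alpha,\beta)$, including through the two defect antidiagonals where the $\tilde{M}$-determinants mix entries from two distant blocks, or adopt the paper's deflation-plus-symmetry route, which was designed precisely to confine that analysis to one exceptional triple.
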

\begin{proof}
According to Lemma \ref{lem:xinandxnjInA} all $x_{in}$ and $x_{nj}$
are in $\overline{\mathcal{A}}_{\alpha\beta}$. If $\beta\neq n-1$
we use induction on $n$: according to Proposition \ref{prop:SeqSRdc}
$\overline{\mathcal{A}}_{\alpha\beta}(n-1)\subset\overline{\mathcal{A}}_{\alpha\beta}(n)$.
By the induction hypothesis, $x_{ij}\in\overline{\mathcal{A}}_{\alpha\beta}(n-1)$
for all $\left(i,j\right)\in\left[n-1\right]\times\left[n-1\right]$. 

If $\beta=n-1$ and $\alpha\neq1$ then $\overline{\mathcal{A}}_{\alpha\beta}$
is isomorphic to $\overline{\mathcal{A}}_{1,n-\alpha}$ (see Remark
\ref{rem:BDIsormrph}) and the argument above holds. Last, the case
$\alpha=1,\beta=n-1$ is covered by Lemma \ref{lem:xijinA1n-1}.\end{proof}
\begin{lem}
$x_{in}\in\overline{\mathcal{A}}_{\alpha\beta}$ and $x_{nj}\in\overline{\mathcal{A}}_{\alpha\beta}$
for all $i,j\in\left[n\right]$.\label{lem:xinandxnjInA} \end{lem}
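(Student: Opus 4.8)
The plan is to separate the generic entries, which are literally initial cluster variables, from the two exceptional positions created by the construction. First I would observe that for $j=n$ and $i<n$ the maximal contiguous submatrix $M_{in}(X)$ consists of a single row and a single column, so $f_{in}=x_{in}$, and likewise $f_{nj}=x_{nj}$ for $i=n$, $j<n$. Hence each such $x_{in}$ and $x_{nj}$ is an initial cluster variable and belongs to $\overline{\mathcal{A}}_{\alpha\beta}$, \emph{except} at the positions where $M_{ij}$ was replaced by $\tilde{M}_{ij}$. Testing the replacement conditions $i=n+j-\alpha$ and $j=n+i-\beta$ against the last row and last column shows that the only exceptions are $(\beta,n)$ (second family, $j=n\Rightarrow i=\beta$) and $(n,\alpha)$ (first family, $i=n\Rightarrow j=\alpha$). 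So the whole content of the lemma reduces to proving $x_{\beta n}\in\overline{\mathcal{A}}_{\alpha\beta}$ and $x_{n\alpha}\in\overline{\mathcal{A}}_{\alpha\beta}$.

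Second, I would recover $x_{\beta n}$ by Laplace expansion. With $\mu=n-\alpha$, the matrix $\tilde{M}_{\beta n}$ is $(\mu+1)\times(\mu+1)$: its first column is $(x_{\beta n},x_{\beta+1,n},0,\ldots,0)^{t}$ and its remaining $\mu$ columns form the block $X^{[1,\mu]}_{[\alpha,n]}$. Expanding along the first column leaves only two terms and yields a relation of the shape $\varphi_{\beta n}=x_{\beta n}\,f_{\alpha+1,1}\pm x_{\beta+1,n}\,A$, where $\varphi_{\beta n}=\det\tilde{M}_{\beta n}$ is an initial cluster variable, $f_{\alpha+1,1}=\det X^{[1,\mu]}_{[\alpha+1,n]}$ is the cluster variable sitting at $(\alpha+1,1)$, the entry $x_{\beta+1,n}$ already lies in $\overline{\mathcal{A}}_{\alpha\beta}$ by the first step, and $A$ is the complementary minor obtained by deleting the second row of the block. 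Solving for $x_{\beta n}$ expresses it as $M_{1}/f_{1}^{m_{1}}$ with $f_{1}$ a cluster variable and $M_{1}\in\overline{\mathcal{A}}_{\alpha\beta}$, provided $A\in\overline{\mathcal{A}}_{\alpha\beta}$.

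To upgrade this single ratio to genuine membership in $\overline{\mathcal{A}}_{\alpha\beta}$ I would invoke Lemma~\ref{lem:InAGSV}: it suffices to produce a second representation $x_{\beta n}=M_{2}/f_{2}^{m_{2}}$ whose denominator $f_{2}$ is a cluster variable coprime to $f_{1}$. I would obtain it either by expanding $\det\tilde{M}_{\beta n}$ along a different line, or by mutating the initial seed at a neighbor of $(\beta,n)$ and repeating the expansion in the new cluster, so that a different, coprime minor plays the role of the denominator; then Lemma~\ref{lem:InAGSV} gives $x_{\beta n}\in\overline{\mathcal{A}}_{\alpha\beta}$. Finally, the first-family matrix $\tilde{M}_{n\alpha}$ is the image of the second-family $\tilde{M}_{\beta n}$ under the transpose-type isomorphism of Remark~\ref{rem:BDIsormrph} (which interchanges the two families and swaps $\alpha,\beta$), so $x_{n\alpha}\in\overline{\mathcal{A}}_{\alpha\beta}$ follows by the same argument with no extra work.

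I expect the main obstacle to be the bookkeeping that keeps every term of the expansion inside $\overline{\mathcal{A}}_{\alpha\beta}$: one must verify that the auxiliary minor $A$ (and its analogue in the second representation) really is a cluster variable or is already known to lie in the algebra, that the two denominators $f_{1},f_{2}$ are genuinely coprime cluster variables rather than associates, and that the degenerate configurations — for instance $\alpha=n/2$, where $f_{\alpha+1,1}$ is itself one of the replaced $\tilde{M}$ determinants, or $\beta+1=n$ — do not break the identification of these minors with honest seed variables.
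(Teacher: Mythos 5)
Your skeleton coincides with the paper's: the generic entries $x_{in}$ ($i\neq\beta$) and $x_{nj}$ ($j\neq\alpha$) are initial cluster variables, the only exceptional positions are $(\beta,n)$ and $(n,\alpha)$, the expansion $\varphi_{\beta n}=x_{\beta n}f_{\alpha+1,1}-x_{\beta+1,n}f_{\alpha+1,1}^{\uparrow}$ is exactly the paper's first representation (the paper treats $x_{n\alpha}$ via $\varphi_{n\alpha}=x_{n\alpha}f_{1,\beta+1}-x_{n,\alpha+1}f_{1,\beta+1}^{\leftarrow}$ and gets $x_{\beta n}$ by the symmetric argument, i.e., your use of the transpose isomorphism of Remark \ref{rem:BDIsormrph} is legitimate), and Lemma \ref{lem:InAGSV} is the correct closing device. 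But the two load-bearing steps are left open, and one of your proposed ways to fill the first would fail. For the second representation you offer ``expanding $\det\tilde{M}_{\beta n}$ along a different line'' as an alternative: this cannot work, because the coefficient of $x_{\beta n}$ in $\det\tilde{M}_{\beta n}$ is its cofactor $f_{\alpha+1,1}$ no matter which row or column you expand along, so every expansion of that one determinant returns the same denominator. A coprime denominator requires a \emph{second function}, and the paper produces it by a single mutation at the vertex $(\alpha+1,1)$ (which is indeed a neighbor of $(\beta,n)$ in $Q_{\alpha\beta}$, via the added arrows): the exchange relation yields $\varphi_{\alpha+1,1}^{\prime}=x_{\beta n}f_{\alpha+1,2}-x_{\beta+1,n}f_{\alpha+1,2}^{\uparrow}$, whence $x_{\beta n}=\bigl(\varphi_{\alpha+1,1}^{\prime}+x_{\beta+1,n}f_{\alpha+1,2}^{\uparrow}\bigr)/f_{\alpha+1,2}$. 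You gesture at this mutation route but never identify the vertex or verify the resulting relation, which is the actual content of the step.

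The second gap is your proviso ``$A\in\overline{\mathcal{A}}_{\alpha\beta}$'': the complementary minors $f_{\alpha+1,1}^{\uparrow}$ and $f_{\alpha+1,2}^{\uparrow}$ (respectively $f_{1,\beta+1}^{\leftarrow}$, $f_{2,\beta+1}^{\leftarrow}$ in the transposed case) are \emph{not} initial cluster variables, and their membership is not automatic; the paper settles it by invoking the companion paper, where they are exhibited as cluster variables reached by the explicit mutation sequence $\left((n,\beta+1),(n-1,\beta+1),\ldots,(2,\beta+1)\right)$ along the adjacent column (and its row analogue), together with the observation that the quiver is locally standard around those vertices. Without this, Lemma \ref{lem:InAGSV} does not apply, since both numerators must already lie in $\overline{\mathcal{A}}_{\alpha\beta}$. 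On the positive side, your coprimality worry is harmless ($f_{\alpha+1,1}$ and $f_{\alpha+1,2}$ are distinct irreducible minors, hence coprime), and your concern about the degenerate configuration $n=2\alpha$, where the vertex $(\alpha+1,1)$ carries a $\tilde{M}$-determinant rather than $f_{\alpha+1,1}$, is a genuinely sharp observation: the paper's proof asserts ``clearly cluster variables'' and passes over this case in silence, so your flag identifies a point where even the published argument is terser than it should be.
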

\begin{proof}
$x_{nj}$ is a cluster variable in the initial cluster for every $j\neq\alpha$.
To see that $x_{n\alpha}\in\overline{\mathcal{A}}$, note that 
\[
\varphi_{n\alpha}=x_{n\alpha}f_{1,\beta+1}-x_{n,\alpha+1}f_{1,\beta+1}^{\leftarrow},
\]
 and therefore 
\begin{equation}
x_{n\alpha}=\frac{\varphi_{n\alpha}+x_{n,\alpha+1}f_{1,\beta+1}^{\leftarrow}}{f_{1,\beta+1}}.\label{eq:xnafirst}
\end{equation}
 Mutating at $(1,\beta+1)$ it is not hard to see that 
\[
\varphi_{1,\beta+1}^{\prime}=x_{n\alpha}f_{2,\beta+1}-x_{n,\alpha+1}f_{2,\beta+1}^{\leftarrow},
\]
 and so 
\begin{equation}
x_{n\alpha}=\frac{\varphi_{1,\beta+1}^{\prime}+x_{n,\alpha+1}f_{2,\beta+1}^{\leftarrow}}{f_{2,\beta+1}}.\label{eq:xnasecond}
\end{equation}
We can now combine \eqref{eq:xnafirst} and \eqref{eq:xnasecond}
into: 
\[
x_{n\alpha}=\frac{\varphi_{n\alpha}+x_{n,\alpha+1}f_{1,\beta+1}^{\leftarrow}}{f_{1,\beta+1}}=\frac{\varphi_{1,\beta+1}^{\prime}+x_{n,\alpha+1}f_{2,\beta+1}^{\leftarrow}}{f_{2,\beta+1}}.
\]
Clearly, $f_{1,\beta+1}$ and $f_{2,\beta+1}$ are cluster variables.
The functions $f_{1,\beta+1}^{\leftarrow}$ and $f_{2,\beta+1}^{\leftarrow}$
can be obtained from the initial cluster through the mutation sequence 

$\left((n,\beta+1),(n-1,\beta+1),\ldots(3,\beta+1),(2,\beta+1)\right)$,
as shown in \cite{eisner2015part1}\footnote{In \cite{eisner2015part1} it is shown that $f_{1,\beta+1}^{\leftarrow}$
and $f_{2,\beta+1}^{\leftarrow}$ are cluster variables of the \emph{standard}
structure on $SL_{n}.$ However, it is not difficult to see that this
is also true for our case, because locally the quiver looks the same
around the mutated vertices, and they correspond to the same cluster
variables.}. Lemma \ref{lem:InAGSV} now proves $x_{n\alpha}\in\overline{\mathcal{A}}$
.

In a similar way, for all $i\neq\beta$ the function $x_{in}$ is
a cluster variable in the initial seed, and therefore $x_{in}\in\overline{\mathcal{A}}$.
Symmetric arguments to those above yield 
\[
x_{\beta n}=\frac{\varphi_{\beta n}+x_{\beta+1,n}f_{\alpha+1,1}^{\uparrow}}{f_{\alpha+1,1}}
\]
 and 
\[
x_{\beta n}=\frac{\varphi_{\alpha+1,1}^{\prime}+x_{\beta+1,n}f_{\alpha+1,2}^{\uparrow}}{f_{\alpha+1,2}}.
\]
 The functions $f_{\alpha+1,1}^{\uparrow}$ and $f_{\alpha+1,2}^{\uparrow}$
are in $\overline{\mathcal{A}}$ (see \cite{eisner2015part1} again),
and $f_{\alpha+1,1}$ and $f_{\alpha+1,2}$ are cluster variables.
According to Lemma \ref{lem:InAGSV} $x_{\beta n}\in\overline{\mathcal{A}}$.
\end{proof}

\subsection{Mutation Sequence $S$ }
\begin{prop}
\label{prop:SeqSRdc}For $\beta\neq n-1$ there is a cluster mutation
sequence $S$ such that $S\left(Q_{\alpha\beta}\left(n\right)\right)$
contains a subquiver isomorphic to $Q_{\alpha\beta}\left(n-1\right)$.
The vertices of this subquiver can be rearranged on an $(n-1)\times(n-1)$
grid such that the vertex $(i,j)$ now corresponds to the cluster
variables $\varphi_{ij}$ as defined on $SL_{n-1}$.\end{prop}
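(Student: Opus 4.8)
The plan is to realize $Q_{\alpha\beta}(n-1)$ inside a mutation of $Q_{\alpha\beta}(n)$ by peeling off the last row and the last column of the $n\times n$ grid. Concretely, I would designate the top-left $(n-1)\times(n-1)$ sub-grid (the vertices $(i,j)$ with $i,j\le n-1$, minus the corner $(1,1)$) as the candidate subquiver, and the boundary strip consisting of row $n$ and column $n$ --- exactly $2n-1$ vertices --- as the part to be discarded. The target cluster variables are the trailing minors of the top-left $(n-1)\times(n-1)$ block of $X$, i.e.\ the functions $\varphi_{ij}$ built for $SL_{n-1}$; note these are \emph{not} the initial variables $\varphi_{ij}(X)$ on $SL_n$, since the latter extend all the way to row and column $n$. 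Thus the content of the proposition is genuinely to transform the variables on the block, not merely to relabel them, which is why a nontrivial mutation sequence $S$ is required.

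For the standard skeleton the reduction is almost automatic: the grid quiver is self-similar, so the arrows $(i,j)\to(i+1,j)$, $(i,j)\to(i,j+1)$ and $(i,j)\to(i-1,j-1)$ of the $SL_n$ quiver, restricted to the top-left block, already reproduce the standard arrows of the $SL_{n-1}$ quiver. The real work is (i) to shrink each trailing minor by one row and one column, and (ii) to move the four added BD arrows inward. For (i) I would use that the exchange relations~\eqref{eq:ExRltn} attached to the grid are precisely the three-term Desnanot--Jacobi (short Plücker) relations; mutating a vertex replaces its minor by one with a row or a column deleted --- exactly the operations recorded by the notation $f_{ij}^{\uparrow},f_{ij}^{\downarrow},f_{ij}^{\leftarrow},f_{ij}^{\rightarrow}$. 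Sweeping $S$ along column $n$ and then row $n$ therefore successively strips the last column and last row from every trailing minor in the block, converting each $\varphi_{ij}(X)$ into the corresponding $SL_{n-1}$ minor, while the mutation rule $\mu_k$ is tracked in parallel to record the new arrows.

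The main obstacle is the Belavin--Drinfeld part: the two unfrozen vertices $(\alpha+1,1)$ and $(1,\beta+1)$ together with the four added long-range arrows incident to $(n,\alpha+1),(n,\alpha),(\beta+1,n),(\beta,n)$. Since these endpoints lie in the discarded strip, I must show that $S$ repositions them to $(n-1,\alpha+1),(n-1,\alpha),(\beta+1,n-1),(\beta,n-1)$ --- moving each long-range arrow exactly one step inward --- so that they match the prescription for $Q_{\alpha\beta}(n-1)$, while the local configuration around $(\alpha+1,1)$ and $(1,\beta+1)$ (which lie in the retained row $1$ and column $1$) is left intact and no spurious arrows are created inside the block. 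Verifying, through the cancellations in the mutation rule, that these global arrows migrate cleanly by one unit and interact correctly with the boundary-stripping mutations is the technical heart of the argument. The hypothesis $\beta\neq n-1$ enters here: it guarantees that $\alpha<\beta\le n-2$ remain simple roots of $SL_{n-1}$, so that $Q_{\alpha\beta}(n-1)$ is defined; the excluded case is recovered in Theorem~\ref{thm:xijbelongstoA} via the isomorphism of Remark~\ref{rem:BDIsormrph}. Once the arrow bookkeeping is complete, one reads off that the retained sub-grid is exactly $Q_{\alpha\beta}(n-1)$ with vertex $(i,j)$ carrying $\varphi_{ij}$ as defined on $SL_{n-1}$.
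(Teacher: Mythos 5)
Your overall strategy (peel off one row and one column via Desnanot--Jacobi-driven mutations) points in the right direction, but the plan fails at its central step. A mutation changes only the cluster variable attached to the vertex being mutated; sweeping along column $n$ and then row $n$ is just $2n-1$ mutations touching only the boundary strip, so every interior vertex $(i,j)$ with $i,j\le n-1$ still carries its original variable $\varphi_{ij}(X)$ --- a minor reaching all the way to row and column $n$ of $X$. Your claim that this sweep ``successively strips the last column and last row from every trailing minor in the block'' is therefore false, and worse, the truncated block minors that the boundary mutations do produce sit precisely on the vertices you propose to discard: mutating at $(n,n)$ yields $x_{n-1,n-1}$ \emph{at vertex} $(n,n)$, and mutating next at $(n-1,n)$ yields $x_{n-2,n-1}$ at $(n-1,n)$. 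The Desnanot--Jacobi exchange relation forces this re-anchoring: it trades $\det A_{\hat 1}^{\hat 1}$ (anchored at $(i,j)$, full depth) for $\det A_{\hat\ell}^{\hat\ell}=\varphi^{(1)}_{i-1,j-1}$ (anchored one step up-left, truncated). Consequently a correct sequence must mutate essentially \emph{every} mutable vertex, and the natural identification shifts each vertex diagonally, $(i,j)\mapsto(i-1,j-1)$, with the discarded vertices being the frozen endpoints of the sweeps near row $1$ and column $1$ --- the opposite of your static top-left inclusion with row $n$ and column $n$ removed. This is exactly what the paper's $S=S_v\circ S_h$ does, organized along up-left diagonal paths $p_h^{(k)}$ starting on the bottom row and $p_v^{(k)}$ starting on the right column.

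The second gap is the Belavin--Drinfeld bookkeeping, which your static identification cannot accommodate even in principle. The special positions carrying the non-minor determinants $\det\tilde M_{ij}$ lie on the lines $i=n+j-\alpha$ and $j=n+i-\beta$; under the forced diagonal shift, $i=n+j-\alpha$ gives $i-1\neq(n-1)+(j-1)-\alpha$, so the set of special vertices for $SL_{n-1}$ is \emph{not} the image of the set for $SL_n$, and your picture of the four long-range arrows ``migrating one step inward'' while $(\alpha+1,1)$ and $(1,\beta+1)$ stay put is inconsistent with the target quiver. In the paper this is resolved by letting the diagonal paths wrap around through the BD vertices ($(\alpha+1,1)$ continuing to $(\beta,n)$, and $(1,\beta+1)$ to $(n,\alpha)$); the intermediate seeds then contain hybrid functions $\psi^{h}_{ij}=f^{(1)}_{ij}f_{1,\beta+1}-f^{(1)\rightarrow}_{ij}f^{\leftarrow}_{1,\beta+1}$ along the row $i=n-\alpha+1$, handled with the non-square Desnanot--Jacobi identity \eqref{eq:DesJacIdM}, and the second sweep $S_v$ is what finally converts these into the $\tilde M$-determinants of $SL_{n-1}$. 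None of this structure can be grafted onto your setup, since your retained subquiver and labeling already contradict the exchange relations. Your reading of the hypothesis $\beta\neq n-1$ (so that $Q_{\alpha\beta}(n-1)$ is defined, the excluded case being handled by Remark \ref{rem:BDIsormrph} and the separate argument for $1\mapsto n-1$) is correct.
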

\begin{proof}
Define $S$ as a composition $S=S_{v}\circ S_{h}$ of two sequences
$S_{v},S_{h}$ that are introduced hereinafter. Let $p_{h}^{\left(k\right)}$
be the sequence of quiver vertices starting at $\left(n,k\right)$
and moving diagonally up and to the left. That is, $p_{h}^{\left(k\right)}=\left(\left(n,k\right),\left(n-1,k-1\right),\ldots\right)$.
If the vertex $\left(n-k+1,1\right)$ is frozen, then it is the last
vertex of $p_{h}^{\left(k\right)}.$ If this is not the case, which
means $k=n-\alpha$, proceed with 
\[
p_{h}^{\left(k\right)}=\left(\left(n,k\right),\ldots,\left(\alpha+1,1\right),\left(\beta,n\right),\left(\beta-1,n-1\right),\ldots\right)
\]
 until hitting a frozen vertex (so the last vertex of $p_{h}^{\left(k\right)}$
must be frozen). Now let $S_{h}^{\left(k\right)}$ be the sequence
of mutations along the vertices of the path $p_{h}^{\left(n+1-k\right)}$,
excluding the last (frozen) vertex. 

Next, define a family of quivers $\left\{ Q_{h}^{\left(k\right)}\right\} _{t=o}^{n-1}$:
the first quiver is $Q_{h}^{\left(0\right)}=Q_{\alpha\beta}(n)$.
The quiver $Q_{h}^{\left(k+1\right)}$ is obtained from $Q_{h}^{\left(k\right)}$
by deleting the vertex $\left(n,n-k\right)$ and all the arrows incident
to it. If in $Q_{h}^{\left(k\right)}$there is an arrow connecting
$\left(n,n-k\right)$ to a vertex $\left(1,j\right)$, there are two
more changes: 
\begin{itemize}
\item an arrow $\left(n-1,n-k\right)\to\left(1,j\right)$ is added;
\item the arrow $\left(1,j\right)\to\left(n,n-k-1\right)$ is replaced by
$\left(1,j\right)\to\left(n-1,n-k-1\right)$.
\end{itemize}
The quivers $Q_{h}^{(3)}$ and $Q_{h}^{(4)}$ for BD triple $T_{24}$
(that is, $2\mapsto4$) on $SL_{6}$ are shown in Fig. \ref{fig:SL62to4Q3}
and Fig \ref{fig:SL62to4Q4}, respectively.

\begin{figure}
\begin{centering}
\includegraphics[scale=0.45]{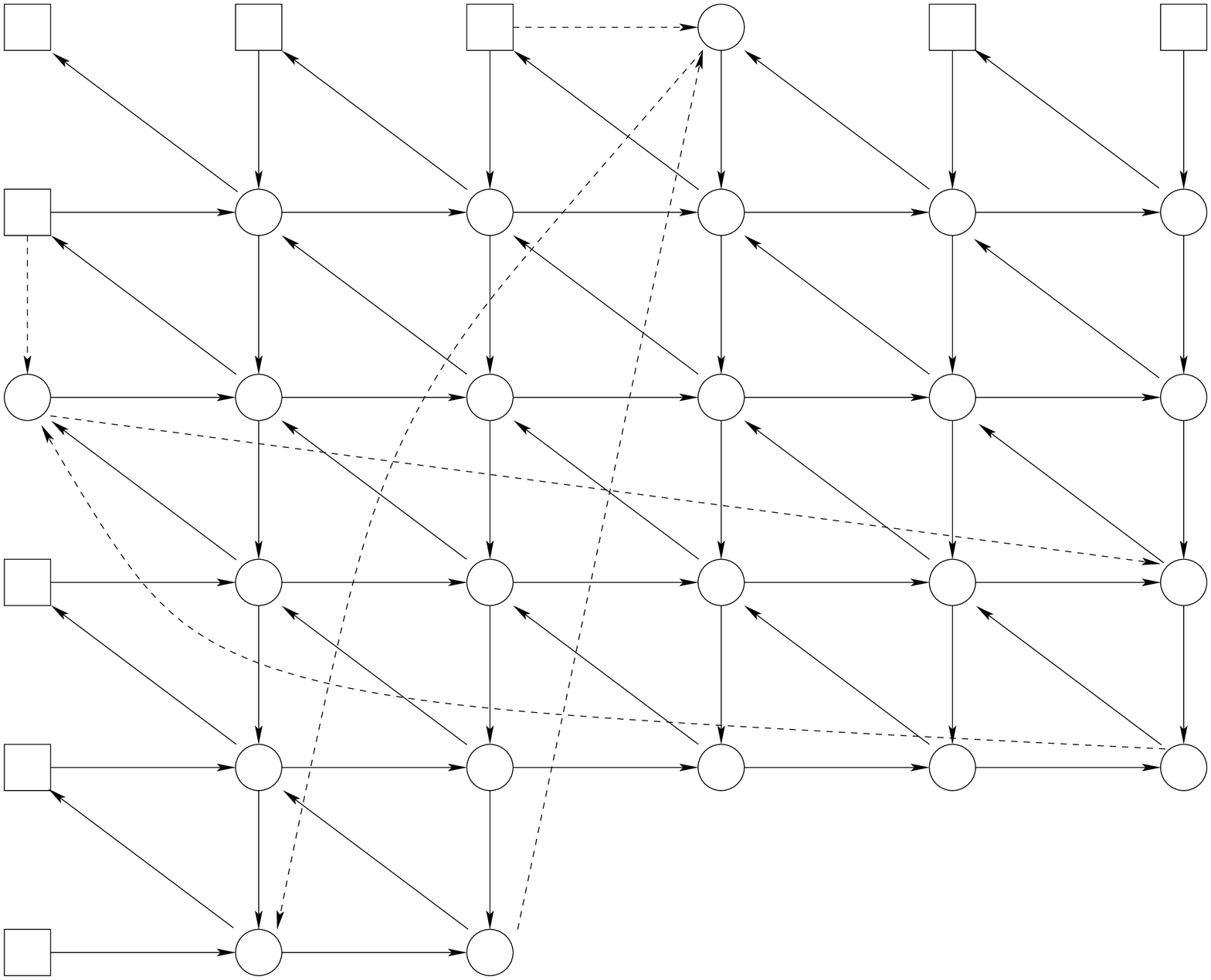} 
\par\end{centering}

\caption{
$Q_{h}^{(3)}$ for $2\protect\mapsto4$ on $SL_{6}$
}

\label{fig:SL62to4Q3} 
\end{figure}

\begin{figure}
\begin{centering}
\includegraphics[scale=0.45]{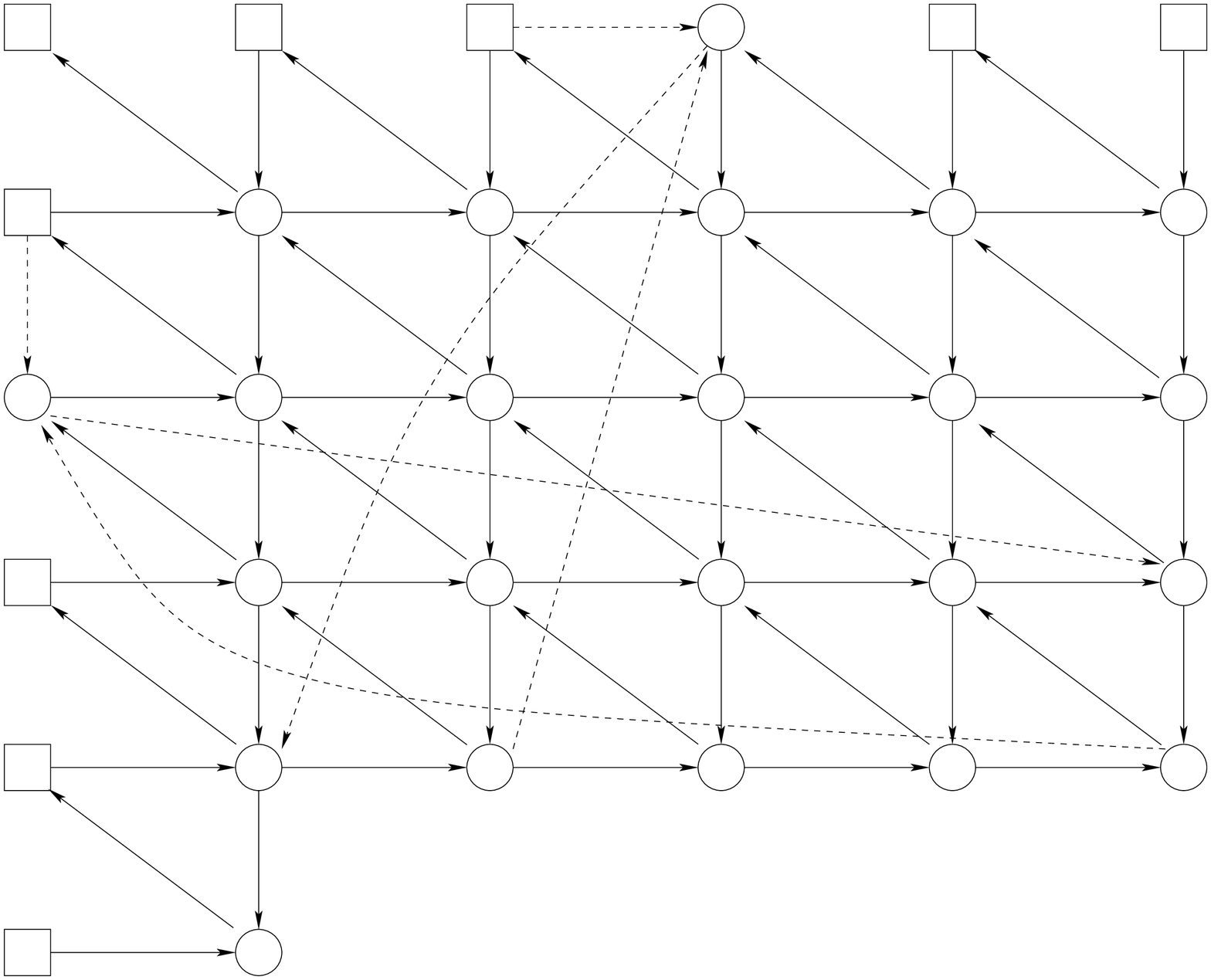} 
\par\end{centering}

\caption{
$Q_{h}^{(4)}$ for $2\protect\mapsto4$ on $SL_{6}$
}

\label{fig:SL62to4Q4} 
\end{figure}

Denote by $\sigma_{h}^{(k)}$ the following manipulations on a quiver
$Q$: 
\begin{enumerate}
\item Apply the mutation sequence $S_{h}^{\left(k\right)}$.
\item Freeze the last vertex of $S_{h}^{\left(k\right)}$.
\item Remove the last vertex of $p_{h}^{\left(n+1-k\right)}$ from the quiver.
\item Shift each vertex in $p_{h}^{\left(n+1-k\right)}$ to the position
of its successor (so $(i,j)$ is shifted to $(i-1,j-1)$ if $j\neq1$.
If $\left(\alpha+1,1\right)$ is in $p_{h}^{\left(n+1-k\right)}$,
it is shifted to $\left(\beta,n\right)$).
\end{enumerate}
We will show that after applying $S_{h}^{\left(k\right)}$ to $Q_{h}^{(k-1)}$,
the last vertex of $p_{h}^{\left(n+1-k\right)}$ has only one neighbor:
the last vertex of $S_{h}^{\left(k\right)}$. When the latter is frozen,
the last vertex of $p_{h}^{\left(n+1-k\right)}$ becomes isolated
(as arrows connecting frozen vertices are ignored), and can be removed
from the quiver.

We will assign a function $\psi_{ij}^{k}$ to every vertex $\left(i,j\right)$
of $Q_{h}^{\left(k\right)}$. For a function $f_{ij}=\det X_{[i,k]}^{[j,\ell]}$
write $f_{ij}^{(m)}=\det X_{[i,k-m]}^{[j,\ell-m]}$. Any function
$\varphi_{ij}$ can be written as determinant of some matrix $M$,
as defined in Section \ref{sec:ConstructionLCSeed}. In this case,
 $\varphi_{ij}^{(m)}$ will stand for the determinant of the submatrix
obtained from $M$ by deleting the last $m$ rows and last $m$ columns.
Now let 
\[
\psi_{ij}^{h}=\begin{cases}
f_{ij}^{(1)}f_{1,\beta+1}-f_{ij}^{(1)\rightarrow}f_{1,\beta+1}^{\leftarrow} & i=n-\alpha+1\\
\varphi_{ij}^{(1)} & \text{otherwise,}
\end{cases}
\]
set $\psi_{ij}^{0}=\varphi_{ij}$, and define 
\[
\psi_{ij}^{k+1}=\begin{cases}
\psi_{ij}^{h} & \left(i,j\right)\in p_{h}^{n+1-k}\\
\psi_{ij}^{k} & \text{otherwise,}
\end{cases}
\]

so the pair $\left(Q_{h}^{\left(k\right)},\left\{ \psi_{ij}^{k}\right\} \right)$
defines a seed that will be denoted $\Sigma_{h}^{\left(k\right)}$. 

In a similar way, define $p_{v}^{(k)},S_{v}^{(k)},\sigma_{v}(k)$
and $Q_{v}^{(k)}$: let $p_{v}^{(k)}$ denote the path $\left((k,n),(k-1,n-1),\ldots,(1,n-k+1)\right)$
if $(1,n-k+1)$ is a frozen vertex (i.e., if $k\neq n-\beta$). Otherwise
the path continues: $\left((n-\beta,n),(n-\beta-1,n-1),\ldots,(1,\beta+1),(n,\alpha),(n-1,\alpha-1)\ldots\right)$
until hitting a frozen vertex. The sequence $S_{v}^{(k)}$ is the
sequence of mutations at the vertices of $p_{v}^{(n-k)}$, excluding
the last (frozen) one. The quivers $Q_{v}^{(k)}$ are defined recursively:
set $Q_{v}^{(0)}=Q_{h}^{(n-1)}$ and let $Q_{v}^{(k+1)}$ be the quiver
obtained from $Q_{v}^{(k)}$ by removing the vertex $(n-k,n)$ and
all the arrows incident to it. If there is an arrow in $Q_{v}^{(k)}$
connecting $(n-k,n)$ to a vertex $(i,1)$ then in addition
\begin{itemize}
\item an arrow $(n-k,n-1)\to(\alpha+1,1)$ is added;
\item the arrow $(\alpha+1,1)\to(n-k-1,n)$ is replaced by $(\alpha+1,1)\to(n-k-1,n-1)$.
\end{itemize}
We now define $\sigma_{v}^{(k)}$ on a quiver $Q$:
\begin{enumerate}
\item Apply the mutation sequence $S_{v}^{\left(k\right)}$.
\item Freeze the last vertex of $S_{v}^{\left(k\right)}$.
\item Remove the last vertex of $p_{v}^{\left(n-k\right)}$ from the quiver.
\item Shift each vertex in $p_{h}^{\left(n-k\right)}$ to the position of
its successor (so $(i,j)$ is shifted to $(i-1,j-1)$ if $j\neq1$.
If $\left(1,\beta+1\right)$ is in $p_{h}^{\left(n-k\right)}$, it
is shifted to $\left(n,\alpha\right)$).
\end{enumerate}
Let $\sigma_{h}=\sigma_{h}^{(n-1)}\circ\cdots\circ\sigma_{h}^{(2)}\circ\sigma_{h}^{(1)}$
and $\sigma_{v}=\sigma_{v}^{(n-2)}\circ\cdots\circ\sigma_{v}^{(2)}\circ\sigma_{v}^{(1)}$
(and correspondingly, let $S_{h}=S_{h}^{(n-1)}\circ\cdots\circ S_{h}^{(2)}\circ S_{h}^{(1)}$
and $S_{v}=S_{v}^{(n-2)}\circ\cdots\circ S_{v}^{(2)}\circ S_{v}^{(1)}$).
According to Lemma \ref{lem:ShtakesSntoSh}, $\sigma_{h}\left(\Sigma{}_{\alpha\beta}(n)\right)=\Sigma_{h}^{(n-1)}$,
and according to Lemma \ref{lem:SeqSv-1} $\sigma_{v}\left(\Sigma_{h}^{(n-1)}\right)=\Sigma_{\alpha\beta}(n-1)$
. Therefore 
\[
\sigma_{v}\circ\sigma_{h}\left(\Sigma_{\alpha\beta}(n)\right)=\Sigma_{h(\alpha\beta)}(n-1),
\]
 which completes the proof.
\end{proof}
The following lemma uses the \emph{Desnanot--Jacobi
identity (see \cite[Th. 3.12]{Bressoud}): for a square matrix $A$,
denote by ``hatted'' subscripts and superscripts deleted rows and
columns, respectively. Then 
\begin{equation}
\det A\cdot\det A_{\hat{c}_{!},\hat{c}_{2}}^{\hat{r}_{1},\hat{r}_{2}}=\det A_{\hat{r}_{1}}^{\hat{c}_{1}}\cdot\det A_{\hat{r}_{2}}^{\hat{c}_{2}}-\det A_{\hat{r}_{2}}^{\hat{c}_{1}}\cdot\det A_{\hat{r}_{1}}^{\hat{c}_{2}}.\label{eq:DesJacId}
\end{equation}
 By adding an appropriate row, we get a similar result for a non square
matrix $B$ with number of rows greater by one than the number of
columns: 
\begin{equation}
\det B_{\hat{r_{1}}}\det B_{\hat{r}_{2}\hat{r}_{3}}^{\hat{c}_{1}}=\det B_{\hat{r}_{2}}\det B_{\hat{r}_{1}\hat{r}_{3}}^{\hat{c}_{1}}-\det B_{\hat{r}_{3}}\det B_{\hat{r}_{1}\hat{r_{2}}}^{\hat{c}_{1}},\label{eq:DesJacIdM}
\end{equation}
 and naturally, a similar identity can be obtained for a matrix with
number of columns greater by one than the number of rows. }
\begin{lem}
For any $\beta\neq n-1,$ the seed $\Sigma_{h}^{(n-1)}$ is obtained
from the initial seed $\Sigma_{\alpha\beta}(n)$ through the sequence
$\sigma_{h}=\sigma_{h}^{(n-1)}\circ\cdots\circ\sigma_{h}^{(2)}\circ\sigma_{h}^{(1)}$.\label{lem:ShtakesSntoSh}\end{lem}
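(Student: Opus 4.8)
The plan is to prove the statement by induction on $k$, showing that each elementary step $\sigma_h^{(k)}$ carries $\Sigma_h^{(k-1)}$ to $\Sigma_h^{(k)}$; composing the steps from $k=1$ to $k=n-1$ then yields $\sigma_h(\Sigma_{\alpha\beta}(n)) = \Sigma_h^{(n-1)}$, since $\Sigma_h^{(0)} = \Sigma_{\alpha\beta}(n)$ by definition. Because a seed is determined by its quiver together with its cluster-variable labels, at every step I must check two things separately: that the mutation sequence $S_h^{(k)}$ (followed by the freezing, deletion and shift operations of $\sigma_h^{(k)}$) transforms the quiver $Q_h^{(k-1)}$ into $Q_h^{(k)}$, and that it transforms the labels $\{\psi_{ij}^{k-1}\}$ into $\{\psi_{ij}^k\}$.

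For the quiver part, I would analyse the effect of mutating along the diagonal path $p_h^{(n+1-k)}$. In the standard part of the quiver every interior vertex $(i,j)$ receives arrows from $(i-1,j)$, $(i,j-1)$, $(i+1,j+1)$ and sends arrows to $(i+1,j)$, $(i,j+1)$, $(i-1,j-1)$. Mutating the vertices of the path in order, I would track how each mutation reverses the diagonal arrow and reroutes the incident horizontal and vertical arrows, verifying that the cumulative effect is purely local to the path and its immediate neighbours. The decisive point is that after $S_h^{(k)}$ the terminal (frozen) vertex of $p_h^{(n+1-k)}$ is left with a single neighbour, the last mutated vertex; once that vertex is frozen the terminal vertex becomes isolated and may be removed, after which the surviving vertices shift one step along the path exactly as prescribed. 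I would then confirm that the arrows added or redirected in the recursive definition of $Q_h^{(k)}$ — the new arrow from a row-$(n-1)$ vertex into the relevant first-row vertex, and the redirection of that first-row vertex's outgoing arrow from row $n$ to row $n-1$ — arise precisely from these rerouting rules.

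For the cluster-variable part I would use the Desnanot--Jacobi identity \eqref{eq:DesJacId}. Away from the Belavin--Drinfeld-modified region each label $\varphi_{ij}$ is a contiguous minor, and mutating at $(i,j)$ along the diagonal is exactly Dodgson condensation: the exchange relation \eqref{eq:ExRltn} rewrites the mutated variable as the minor with its last row and column deleted, i.e.\ as $\varphi_{ij}^{(1)}$, which matches the definition of $\psi_{ij}^k$. The neighbours appearing in the two monomials on the right of \eqref{eq:ExRltn} supply precisely the four corner minors $\det A_{\hat r_1}^{\hat c_1}$, $\det A_{\hat r_2}^{\hat c_2}$, $\det A_{\hat r_2}^{\hat c_1}$, $\det A_{\hat r_1}^{\hat c_2}$ of the identity, so away from the special region the verification reduces to matching indices.

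The main obstacle, and the heart of the argument, is the behaviour on the special row $i = n-\alpha+1$ and on the wrap-around segment of the path $p_h^{(n-\alpha)}$, which leaves $(\alpha+1,1)$, jumps to $(\beta,n)$, and then continues diagonally upward. Here the labels are not ordinary minors: they are the determinants of the block matrices $\tilde M_{ij}$, equivalently the bilinear combinations $\psi_{ij}^h = f_{ij}^{(1)} f_{1,\beta+1} - f_{ij}^{(1)\rightarrow} f_{1,\beta+1}^{\leftarrow}$, and the local quiver carries the extra Belavin--Drinfeld arrows around $(1,\beta+1)$ and $(\alpha+1,1)$. For this segment I would apply the rectangular form \eqref{eq:DesJacIdM} of the identity, chosen to match the block structure of $\tilde M_{ij}$, to show that the exchange relation still produces the claimed $\psi_{ij}^h$. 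I expect the bookkeeping of arrows across the $(\alpha+1,1)\leftrightarrow(\beta,n)$ junction — ensuring that the mutation sequence reconnects the two halves of the path consistently and that no spurious arrows survive — to be the most delicate and error-prone part of the proof, and I would treat this junction as the core case on which the rest of the induction is modelled.
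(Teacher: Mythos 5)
Your proposal takes essentially the same route as the paper's proof: induction over the steps $\sigma_h^{(k)}$, local tracking of the quiver mutations along each diagonal path (with the terminal vertex left with a single neighbor, isolated after freezing, then removed and shifted), Dodgson condensation via \eqref{eq:DesJacId} giving $\varphi_{ij}'=\varphi_{i-1,j-1}^{(1)}$ in the generic cases, and separate treatment of the two Belavin--Drinfeld-affected sequences. Your only substantive slip is bookkeeping: the bilinear labels $\psi_{ij}^{h}=f_{ij}^{(1)}f_{1,\beta+1}-f_{ij}^{(1)\rightarrow}f_{1,\beta+1}^{\leftarrow}$ arise along $S_h^{(n-\alpha)}$, the sequence starting at $(n,\alpha+1)$ (where the paper computes directly), not on the wrap-around path through $(\alpha+1,1)$ and $(\beta,n)$; on that wrap-around segment ($k=\alpha+1$) the paper applies the \emph{square} identity \eqref{eq:DesJacId} to bordered block matrices of $\tilde{M}$-type, whose zero blocks make the cubic and quartic exchange monomials created by the extra BD arrows (e.g.\ $\varphi_{\alpha+2,1}\varphi_{\alpha 1}\varphi_{\beta+1,n}$ in \eqref{eq:exrltata+11S}) factor as corner minors --- the specific device your appeal to \eqref{eq:DesJacIdM} gestures toward but does not pin down.
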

\begin{proof}
Apply the sequence $S_{h}^{(1)}$ to the quiver $Q_{h}^{(0)}$: mutating
at $\left(n,n\right)$ removes the arrows $\left(n-1,n-1\right)\to\left(n-1,n\right)$
and $\left(n-1,n-1\right)\to\left(n,n-1\right)$, and reverses the
arrows incident to $\left(n,n\right)$ so now they are $\left(n,n\right)\to\left(n-1,n\right)$,
$\left(n,n\right)\to\left(n,n-1\right)$ and $\left(n-1,n-1\right)\to\left(n,n\right)$.
The next mutation at $\left(n-1,n-1\right)$ (shown in Fig. \eqref{fig:SkSndMut})
\begin{figure}
\begin{centering}
\includegraphics[scale=0.4]{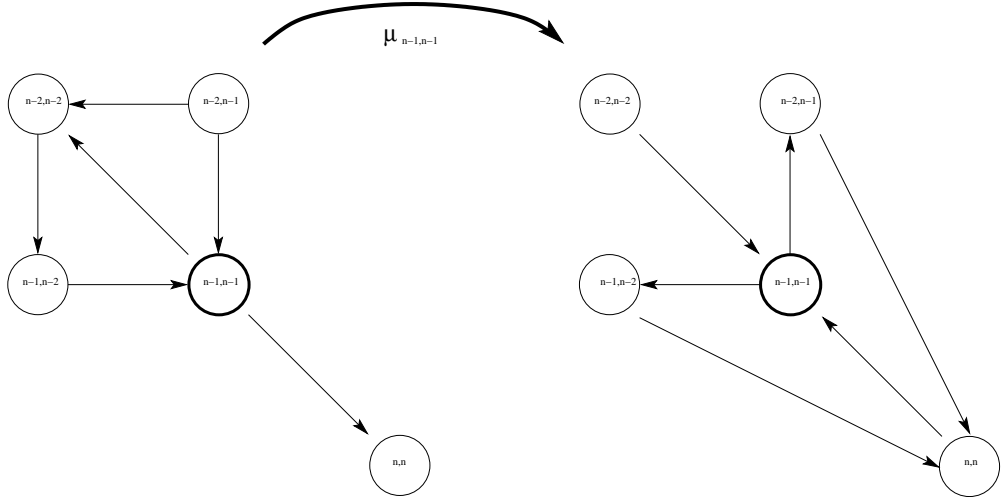} 
\par\end{centering}

\caption{
Mutation $S$ at $(n-1,n-1)$
}

\label{fig:SkSndMut} 
\end{figure}
then removes the arrows $\left(n-2,n-2\right)\to\left(n-2,n-1\right)$
and $\left(n-2,n-2\right)\to\left(n-1,n-2\right)$, and reverses arrows
so now we have $\left(n-1,n-1\right)\to\left(n-2,n-1\right)$, $\left(n-1,n-1\right)\to\left(n-1,n-2\right)$,
$\left(n-2,n-2\right)\to\left(n-1,n-1\right)$, and $\left(n,n\right)\to\left(n-1,n-1\right)$.
Use induction now to see that this mutation looks the same at each
vertex of the sequence. As a result, after the last mutation at $(2,2)$
the vertex $\left(1,1\right)$ has only $\left(2,2\right)$ as a neighbor.
Consequently, after freezing $\left(2,2\right)$ the vertex $\left(1,1\right)$
becomes isolated, and can not take part in any future mutations. It
is therefore removed from the quiver. It is not hard to see that $\sigma_{h}^{(1)}\left(Q_{h}^{(0)}\right)=Q_{h}^{(1)}$.

At each step of this sequence the exchange relation at $(i,i)$ is
\begin{equation}
\varphi_{ii}\varphi_{ii}^{\prime}=\varphi_{i-1,i-1}\varphi_{i+1,i+1}^{\prime}+\varphi_{i-1,i}\varphi_{i,i-1}.\label{eq:exrltatii}
\end{equation}

Assume (by induction) that $\varphi_{i+1,i+1}^{\prime}=\varphi_{ii}^{(1)}$
and write $A=X_{[i-1,n]}^{[i-1,n]}$ , with $\ell$ for the last row
(and column) of $A$. The exchange rule \eqref{eq:exrltatii} then
becomes 
\[
\varphi_{ii}\varphi_{ii}^{\prime}=\det A\det A_{\hat{1}\hat{\ell}}^{\hat{1}\hat{\ell}}+\det A_{\hat{\ell}}^{\hat{1}}\det A_{\hat{1}}^{\hat{\ell}}
\]
 and according to \eqref{eq:DesJacId} 
\[
\varphi_{ii}\varphi_{ii}^{\prime}=\det A_{\hat{1}}^{\hat{1}}\det A_{\hat{\ell}}^{\hat{\ell}}=\varphi_{ii}\varphi_{i-1,i-1}^{(1)},
\]

which means 
\[
\varphi_{ii}^{\prime}=\varphi_{i-1,i-1}^{(1)},
\]
 and this proves that $\sigma_{1}\left(\Sigma_{\alpha\beta}(n)\right)=\Sigma_{h}^{(1)}$.

We will now show that $\sigma_{h}^{(k)}\left(Q_{h}^{(k-1)}\right)=Q_{h}^{(k)}$
for every $k\in[n-1]$. If $k\notin\left\{ \alpha+1,n-\alpha\right\} $
then we use induction again to assume that locally at each step the
path $p_{h}^{\left(n+1-k\right)}$ is isomorphic to $p_{h}^{\left(n\right)}$
, so the quiver mutations are (locally) identical, and after the mutations
and the removal of the last vertex we end up with $Q_{h}^{(k)}$.
The mutation rule at $\left(n-m,n+1-k-m\right)$ of the sequence $S_{h}^{(k)}$
looks like the one at $\left(n-m,n-m\right)$ in the sequence $S_{h}^{(1)}$.
Put $i=n-m$ and $j=n+1-k-m$ so the exchange relation is 
\[
\varphi_{ij}\varphi_{ij}^{\prime}=\varphi_{i-1,j-1}\varphi_{i+1,i+1}^{\prime}+\varphi_{i-1,j}^{(1)}\varphi_{i,j-1}.
\]
Assume by induction $\varphi_{i+1,i+1}^{\prime}=\varphi_{ij}^{(1)},$
and write $A=X_{[i,n]}^{[j,\mu]}$ with $\mu=n-i+j$. We can now use
\eqref{eq:DesJacId} to obtain 
\[
\varphi_{ij}^{\prime}=\varphi_{ij}^{(1)}.
\]
Hence, for $k\notin\left\{ \alpha+1,n-\alpha\right\} $ we have 
\begin{equation}
\sigma_{h}^{(k)}\left(\Sigma_{h}^{(k-1)}\right)=\Sigma_{h}^{(k)}.\label{eq:sktakesSk1-toSk}
\end{equation}

If $k=\alpha+1$, then the above holds for the first part of the sequence
$S_{h}^{(k)}$, which is $\left(\left(n,n-\alpha\right),\ldots\left(\alpha+2,2\right)\right)$.
The sequence then continues with a mutation at $\left(\alpha+1,1\right)$
as illustrated in Fig. \ref{fig:S1ata+11}:

\begin{figure}
\begin{centering}
\includegraphics[scale=0.4]{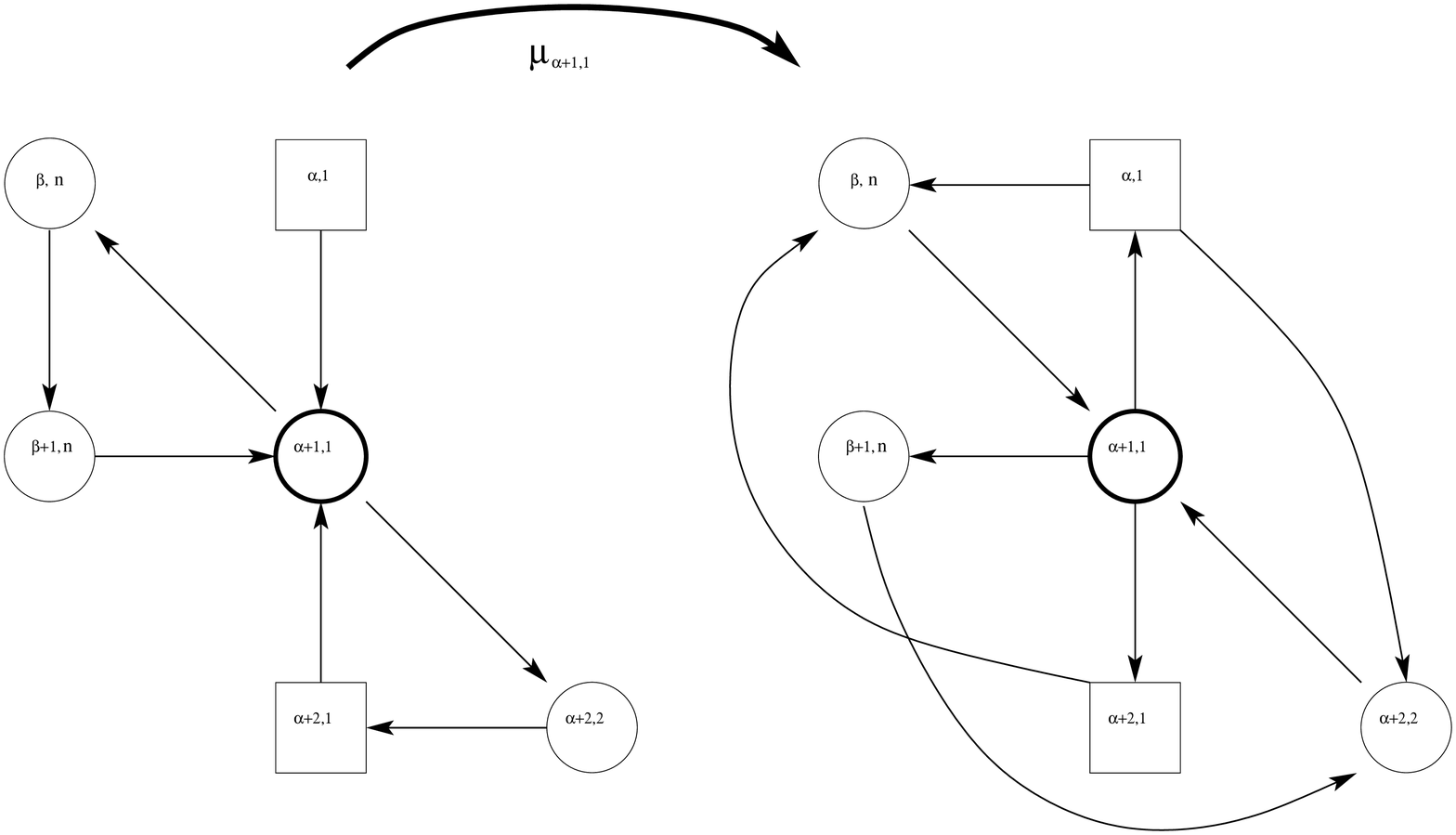} 
\par\end{centering}

\caption{
Mutation $S$at $(\alpha+1,1)$
}

\label{fig:S1ata+11} 
\end{figure}

the arrows $\left(\beta,n\right)\to\left(\beta+1,n\right)$ and $\left(\alpha+2,2\right)\to\left(\alpha+2,1\right)$
are removed, and three new arrows are added: $\left(\alpha,1\right)\to\left(\beta,n\right)$,
$\left(\alpha+2,1\right)\to\left(\beta,n\right)$ and $\left(\alpha,1\right)\to\left(\alpha+2,2\right)$.
All the arrows incident to $\left(\alpha+1,1\right)$ are inverted. 

The exchange relation is 
\begin{equation}
\varphi_{\alpha+1,1}\varphi_{\alpha+1,1}^{\prime}=\varphi_{\alpha+2,1}\varphi_{\alpha1}\varphi_{\beta+1,n}+\varphi_{\alpha+2,2}^{\prime}\varphi_{\beta n},\label{eq:exrltata+11S}
\end{equation}
and writing 
\[
A=\left[\begin{array}{ccccc}
x_{\beta n} & x_{\alpha1} & \cdots & \cdots & x_{\alpha\mu}\\
x_{\beta+1,n} & x_{\alpha+1,1}\\
0 & x_{\alpha+2,1} & \ddots\\
\vdots & \vdots &  & \ddots\\
0 & x_{n1} & \cdots &  & x_{n\mu}
\end{array}\right]
\]
with $\mu=n-\alpha$, one can use \eqref{eq:DesJacId} to obtain 
\[
\varphi_{\alpha+1,1}^{\prime}=\det\left[\begin{array}{ccccc}
x_{\beta n} & x_{\alpha1} & \cdots & \cdots & x_{\alpha,\mu-1}\\
x_{\beta+1,n} & x_{\alpha+1,1}\\
0 & x_{\alpha+2,1} & \ddots\\
\vdots & \vdots &  & \ddots\\
0 & x_{n-1,1} & \cdots &  & x_{n-1,\mu-1}
\end{array}\right]=\varphi_{\beta n}^{(1)}.
\]

The next mutations are at $\left(\beta-k,n-k\right)$ with $k=0,1,\ldots,\beta+2$
(for $k=0$ we label $\left(\alpha+1,1\right)$ as $\left(\beta+1,n+1\right)$).
The relevant part of the quiver is shown in Fig. \ref{fig:Smutatbn}.

\begin{figure}
\begin{centering}
\includegraphics[scale=0.4]{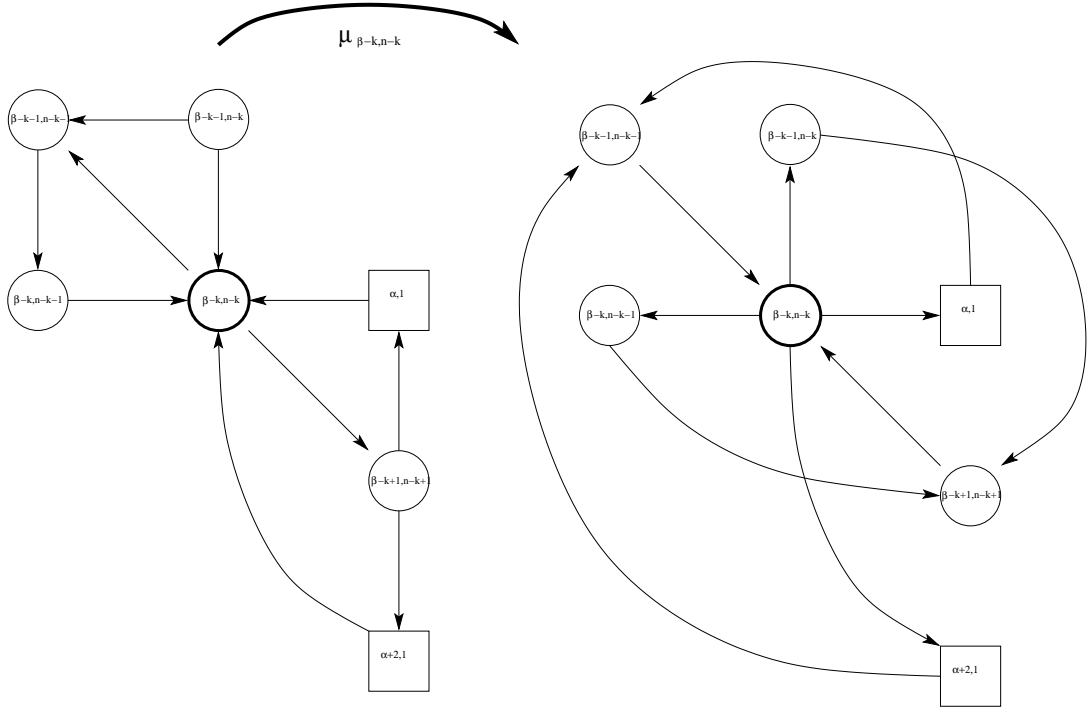} 
\par\end{centering}

\caption{
Mutation $S$ at $(\beta-k,n-k)$
}

\label{fig:Smutatbn} 
\end{figure}
 Four arrows -- $\left(\beta-k-1,n-k\right)\to\left(\beta-k-1,n-k-1\right)$,
$\left(\beta-k-1,n-k-1\right)\to\left(\beta-k,n-k-1\right)$, $\left(\beta-k-1,n-k-1\right)\to\left(\alpha,1\right)$
and $\left(\beta-k-1,n-k-1\right)\to\left(\alpha+2,1\right)$ are
removed, and four new arrows are added: $(\alpha,1)\to(\beta-k-1,n-k-1)$,
$(\alpha+2,1)\to(\beta-k-1,n-k-1)$, $(\beta-k-1,n-k)\to(\beta-k+1,n-k+1)$
and $(\beta-k,n-k-1)\to(\beta-k+1,n-k+1)$. In addition, all the arrows
incident to $(\beta-k,n-k)$ are inverted.

The exchange rule here (with $i=\beta-k,$ and $j=n-k$) is 
\begin{equation}
\varphi_{ij}\varphi_{ij}^{\prime}=\varphi_{i-1,j-1}\varphi_{i+1,i+1}^{\prime}+\varphi_{i-1,j}\varphi_{i,j-1}\varphi_{\alpha1}\varphi_{\alpha+2,1}.\label{eq:exrltatbnS}
\end{equation}

Assume by induction $\varphi_{i+1,i+1}^{\prime}=\varphi_{ij}^{(1)},$
and as in the previous cases, write 
\[
A=\left[\begin{array}{ccccccc}
x_{i-1,j-1} & \cdots & x_{i-1,n} & 0 & \cdots & \cdots & 0\\
\vdots & \ddots & \vdots & \vdots &  &  & \vdots\\
\vdots &  & x_{\beta n} & x_{\alpha1} & \cdots & \cdots & x_{\alpha\mu}\\
x_{\beta+1,j-1} & \cdots & x_{\beta+1,n} & x_{\alpha+1,1} &  &  & \vdots\\
0 & \cdots & 0 & \vdots & \ddots &  & \vdots\\
\vdots &  & \vdots &  &  & \ddots & \vdots\\
0 & \cdots & 0 & x_{n1} & \cdots & \cdots & x_{n\mu}
\end{array}\right]
\]
 and let $\ell$ denote the last row (and column) of $A$, and now
\eqref{eq:exrltatbnS} turns to 
\[
\varphi_{ij}\varphi_{ij}^{\prime}=\det A\det A_{\hat{1}\hat{\ell}}^{\hat{1}\hat{\ell}}+\det A_{\hat{1}}^{\hat{\ell}}\det A_{\hat{\ell}}^{\hat{1}}=\det A_{\hat{1}}^{\hat{1}}\det A_{\hat{\ell}}^{\hat{\ell}},
\]
 so that $\varphi_{ij}^{\prime}=\varphi_{i-1,j-1}^{(1)}$. This proves
\eqref{eq:sktakesSk1-toSk} for $k=\alpha+1$.

The last case is $k=n-\alpha$, with $S_{h}^{(k)}=\left(\left(n,\alpha+1\right),\left(n-1,\alpha\right),\ldots\right)$.
Fig. \ref{fig:S1atna+1} shows the mutation at $\left(n,\alpha+1\right)$:
\begin{figure}
\begin{centering}
\includegraphics[scale=0.45]{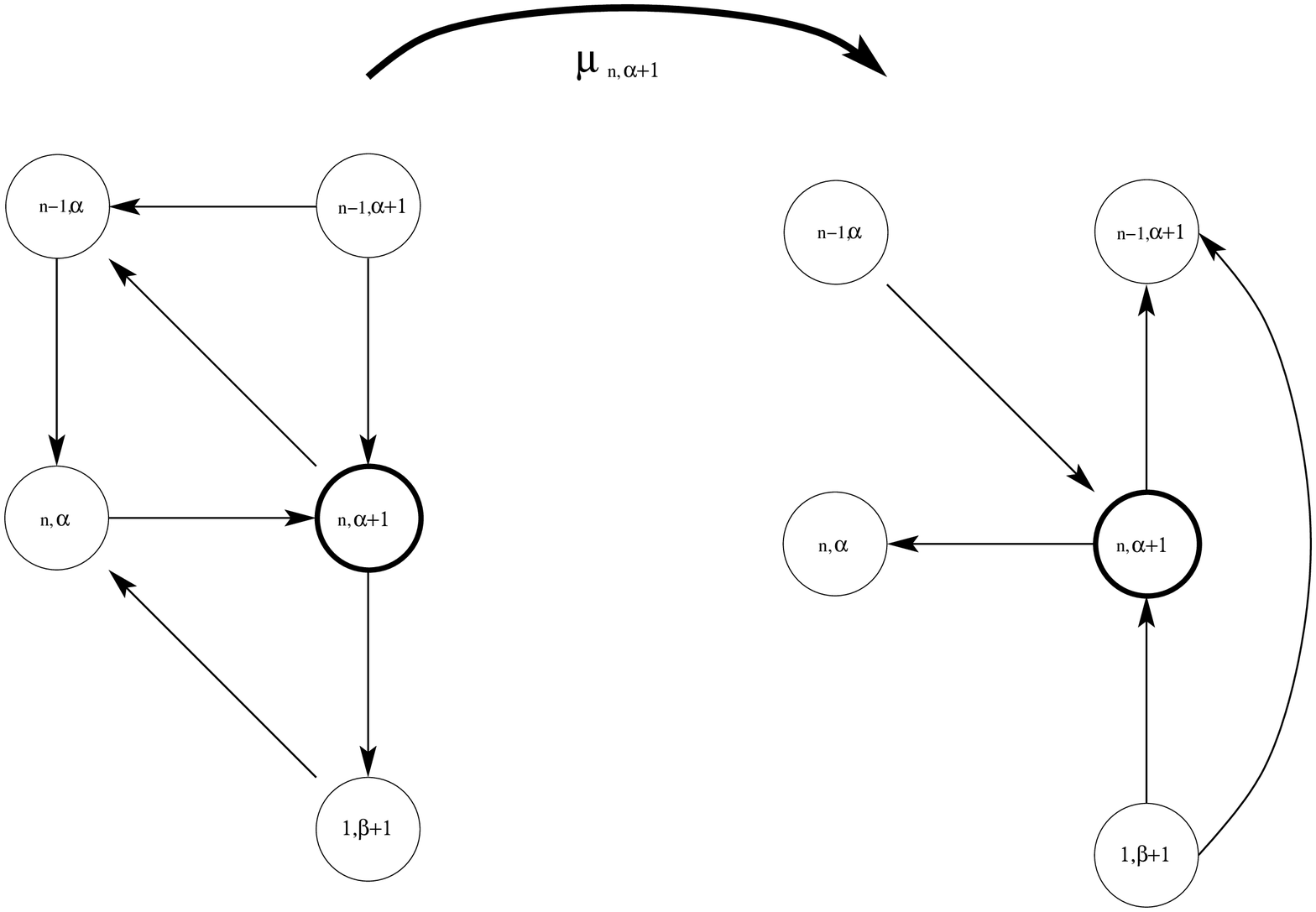} 
\par\end{centering}

\caption{
Mutation $S$at $\left(n,\alpha+1\right)$
}

\label{fig:S1atna+1} 
\end{figure}
 The arrows $(n-1,\alpha+1)\to(n-1,\alpha)$, $(n-1,\alpha)\to(n,\alpha)$
and $(1,\beta+1)\to(n,\alpha)$ are removed, while an arrow $(1,\beta+1)\to(n-1,\alpha+1)$
is added. The four arrows incident to $(n,\alpha+1$) are inverted. 

The exchange relation here is 
\[
\varphi_{n,\alpha+1}\varphi_{n,\alpha+1}^{\prime}=\varphi_{n\alpha}\varphi_{n-1,\alpha+1}^{(1)}+\varphi_{n-1,\alpha}\varphi_{1,\beta+1},
\]
 because the function associated with the vertex $(n-1,\alpha+1)$
is now $\varphi_{n-1,\alpha+1}^{(1)}$ (in the quiver $Q_{\alpha\beta}^{(k-1)}$).
Recall that $\varphi_{n\alpha}=x_{n\alpha}f_{1,\beta+1}-x_{n,\alpha+1}f_{1,\beta+1}^{\leftarrow}$,
so we can write 
\begin{eqnarray*}
\varphi_{n,\alpha+1}\varphi_{n,\alpha+1}^{\prime} & = 
& \left(x_{n\alpha}f_{1,\beta+1}-x_{n,\alpha+1}f_{1,\beta+1}^{\leftarrow}\right)x_{n-1,\alpha+1}\\
& &+\left|\begin{array}{cc}
x_{n-1,\alpha} & x_{n-1,\alpha+1}\\
x_{n\alpha} & x_{n,\alpha+1}
\end{array}\right|f_{1,\beta+1}\\
 & = & x_{n,\alpha+1}\left(x_{n-1,\alpha}f_{1,\beta+1}-x_{n-1,\alpha+1}f_{1,\beta+1}^{\leftarrow}\right)
\end{eqnarray*}
 and therefore 
\[
\varphi_{n,\alpha+1}^{\prime}=x_{n-1,\alpha}f_{1,\beta+1}-x_{n-1,\alpha+1}f_{1,\beta+1}^{\leftarrow}.
\]
 Proceeding along the path $S_{h}^{(k)}=\left(\left(n,\alpha+1\right),\left(n-1,\alpha\right),\ldots\right)$
, the result is 
\[
\varphi_{ij}^{\prime}=f_{i-1,j-1}^{(1)}f_{1,\beta+1}-f_{i-1,j-1}^{(1)\rightarrow}f_{1,\beta+1}^{\leftarrow}
\]
 for all $i=n-\alpha-1+j$. This shows that \eqref{eq:sktakesSk1-toSk}
holds for $k=n-\alpha$ as well.

We have shown that $\sigma_{h}^{(k)}\left(\Sigma{}_{h}^{(k-1)}\right)=\Sigma{}_{h}^{(k)}$
for $k\in[1,n]$. Thus, writing $\sigma_{h}=\sigma_{h}^{(n-1)}\cdots\circ\sigma_{h}^{(2)}\circ\sigma_{h}^{(1)}$
we have proved 
\[
\sigma_{h}\left(\Sigma_{\alpha\beta}\right)=\Sigma_{h(\alpha\beta)}.
\]
\end{proof}
\begin{lem}
The seed $\Sigma_{\alpha\beta}(n-1)$ is obtained from the initial
seed $\Sigma_{h}^{(n-1)}$ through the sequence $\sigma_{v}=\sigma_{v}^{(n-1)}\circ\cdots\circ\sigma_{v}^{(2)}\circ\sigma_{v}^{(1)}.$
\label{lem:SeqSv-1}\end{lem}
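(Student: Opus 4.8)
The plan is to mirror the proof of Lemma~\ref{lem:ShtakesSntoSh}, exploiting the row--column symmetry of the construction: the sequence $\sigma_v$ does for columns what $\sigma_h$ did for rows, with the parameters $\alpha$ and $\beta$ interchanged. This is the transpose isomorphism $X\mapsto-X^{t}$ of Remark~\ref{rem:BDIsormrph}, which sends $T_{\alpha\beta}$ to $T_{\beta\alpha}$ and swaps the horizontal and vertical sweeps, so every individual quiver picture and every exchange computation below is the transpose of one already carried out. Setting $\Sigma_v^{(0)}=\Sigma_h^{(n-1)}$ (consistently with $Q_v^{(0)}=Q_h^{(n-1)}$), I would prove by induction the one--step claim
\[
\sigma_v^{(k)}\bigl(\Sigma_v^{(k-1)}\bigr)=\Sigma_v^{(k)},\qquad k\in[1,n-2],
\]
and conclude $\sigma_v\bigl(\Sigma_h^{(n-1)}\bigr)=\Sigma_{\alpha\beta}(n-1)$ by composition.

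First I would dispose of the generic index $k\notin\{\beta,\,n-\beta-1\}$. Here the path $p_v^{(n-k)}$ is an honest diagonal whose local neighbourhood in $Q_v^{(k-1)}$ is, vertex by vertex, the transpose of the generic diagonal treated before, and the exchange relation along it has the same shape as in Lemma~\ref{lem:ShtakesSntoSh},
\[
\varphi_{ij}\,\varphi_{ij}^{\prime}=\varphi_{i-1,j-1}\,\varphi_{i+1,i+1}^{\prime}+\varphi_{i-1,j}\,\varphi_{i,j-1}.
\]
Feeding the inductive identity $\varphi_{i+1,i+1}^{\prime}=\varphi_{ij}^{(1)}$ into the square Desnanot--Jacobi identity~\eqref{eq:DesJacId} applied to the relevant contiguous submatrix yields $\varphi_{ij}^{\prime}=\varphi_{i-1,j-1}^{(1)}$. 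After freezing the terminal vertex of $S_v^{(k)}$, the last vertex of $p_v^{(n-k)}$ becomes isolated and is removed, so the quiver step $\sigma_v^{(k)}(Q_v^{(k-1)})=Q_v^{(k)}$ and the attached functions both match.

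Next come the two special indices. For $k=\beta$ the path $p_v^{(n-\beta)}$ does not stop at the (now unfrozen) vertex $(1,\beta+1)$ but turns and descends through $(n,\alpha),(n-1,\alpha-1),\dots$; this is the column analogue of the case $k=\alpha+1$, and the extra arrows around $(\alpha+1,1)$ in the definition of $Q_v^{(k)}$ produce a four--term exchange relation. Here I expect the non--square identity~\eqref{eq:DesJacIdM} to enter, since it is exactly adapted to the $\beta$--family blocks whose row count exceeds the column count by one. For $k=n-\beta-1$ the sweep runs along the path starting at $(\beta+1,n)$ and is the transpose of the $k=n-\alpha$ step: using $\varphi_{\beta n}=x_{\beta n}f_{\alpha+1,1}-x_{\beta+1,n}f_{\alpha+1,1}^{\uparrow}$ to factor the right--hand side, it introduces a family of mixed determinants built from $f_{\alpha+1,1}$ and $f_{\alpha+1,1}^{\uparrow}$.

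The genuine obstacle is the bookkeeping where the two sweeps overlap. Unlike Lemma~\ref{lem:ShtakesSntoSh}, $\sigma_v$ starts not from the clean initial seed but from $\Sigma_h^{(n-1)}$, whose functions along one locus are already the horizontally--mixed determinants $\psi_{ij}^{h}=f_{ij}^{(1)}f_{1,\beta+1}-f_{ij}^{(1)\rightarrow}f_{1,\beta+1}^{\leftarrow}$ built from $f_{1,\beta+1}$ and $f_{1,\beta+1}^{\leftarrow}$, whereas $\sigma_v$ manufactures a second, vertically--mixed family built from $f_{\alpha+1,1}$ and $f_{\alpha+1,1}^{\uparrow}$. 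I would therefore verify explicitly that near the region governed by the non--block matrices $\tilde M_{ij}$ -- where $(\alpha+1,1)$, $(1,\beta+1)$ and both mixings interact -- the determinant identities~\eqref{eq:DesJacId}/\eqref{eq:DesJacIdM} assemble these two families into precisely the initial cluster variables of $\Sigma_{\alpha\beta}(n-1)$ on the $(n-1)\times(n-1)$ grid, reproducing the special $(n-1)$--level matrices $\tilde M_{ij}$ rather than merely plain minors. Establishing this compatibility of the two block structures, rather than any single determinant computation, is what makes the lemma non--routine.
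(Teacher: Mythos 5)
Your proposal is correct and follows essentially the same route as the paper, whose entire proof of this lemma is the single remark that it is ``almost identical to the proof of Lemma~\ref{lem:ShtakesSntoSh}''; you simply spell out what that means: transpose the sweep, run the same induction with \eqref{eq:DesJacId}, and treat the special indices $k=\beta$ and $k=n-\beta-1$ as the analogues of $k=\alpha+1$ and $k=n-\alpha$. Your closing observation --- that the real content lies in checking that the vertically-mixed functions produced by $\sigma_v$ combine with the $\psi_{ij}^{h}$ already present in $\Sigma_h^{(n-1)}$ to reproduce the $\tilde M_{ij}$-type variables of $\Sigma_{\alpha\beta}(n-1)$ --- is a point the paper glosses over entirely, so your write-up is, if anything, more explicit than the original.
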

\begin{proof}
The proof is almost identical to the proof of Lemma \ref{lem:ShtakesSntoSh}.
\end{proof}

\subsection{Mutation sequences $T_{k}$}

Start with the mutation sequence $T_{1}$, mutating along columns
of the quiver $Q_{1\mapsto n-1}$, bottom to top, from the right column
to the left. That is, define
\begin{eqnarray*}
T_{1} & = & (\left(n,n\right),\left(n-1,n\right),\ldots,\left(1,n\right),\left(n,n-1\right),\left(n-1,n-1\right),\ldots,\\
 &  & \left(2,n-1\right),\ldots,\left(n,2\right),\left(n-1,2\right),\ldots,\left(2,2\right)),
\end{eqnarray*}
 and let
\begin{eqnarray*}
T_{m} & = & (\left(n,n\right),\ldots,\left(m+1,n\right),\left(n,n-1\right),\ldots\left(m+1,n-1\right),\ldots,\\
 &  & \left(n,m+1\right),\ldots,\left(m+1,m+1\right)).
\end{eqnarray*}
 This sequence has only columns $n,n-1,\ldots m+1$ and each column
only contains rows $n,n-1,\ldots m+1$. Note that $T_{m}\subset T_{m-1}$.
Set $\Sigma_{0}=\left(\mathcal{B}_{1,n-1},Q_{1,n-1}(n)\right)$ (the
initial seed) and define $\Sigma_{m}=\left(\mathcal{B}_{m},Q^{m}\right)$
as the seed obtained from $\Sigma_{m-1}$ through the sequence of
mutations $T_{m}$.  Let $\varphi_{ij}^{m}$ denote the function
associated with the vertex $\left(i,j\right)$ in the seed $\Sigma_{m}$,
and let 
\[
\tilde{f}_{ij}=f_{ij}f_{21}-f_{ij}^{\downarrow}f_{21}^{\uparrow},
\]
and 
\[
\tilde{f}_{ij}^{[1]}=f_{ij}^{(1)}f_{21}-f_{ij}^{(1)\downarrow}f_{21}^{\uparrow}.
\]
Note that if $\varphi=\tilde{f}_{ij}$ then $\varphi^{(1)}\neq\tilde{f}_{ij}^{[1]}$. 
\begin{prop}
\label{prop:SeedS1}In the seed $\Sigma_{1}$,  

1. for  $i,j\in\left\{ 2,\ldots,n\right\} $
the cluster variables take the form
\begin{eqnarray*}
\varphi_{ij}^{1} & = & \begin{cases}
f_{i-1,j-1}^{\left(1\right)} & j>i,\\
\tilde{f}_{i-1,j-1}^{[1]} & j\le i;
\end{cases}
\end{eqnarray*}
 
2. after freezing all the vertices $\left(2,j\right)$ and $\left(i,2\right)$,
and vertices $(1,n)$ and $(2,1)$, all the vertices $\left(1,j\right)$
and $\left(i,1\right)$ become isolated (connected only to frozen
vertices) and can therefore be ignored. The subquiver on vertices
$i,j\in\left\{ 2,\ldots,n\right\} $ is isomorphic to the standard
quiver $Q_{std}(n-1)$ on $SL_{n-1}$.\end{prop}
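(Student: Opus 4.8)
The plan is to track the cluster variables explicitly through the sequence $T_1$, proceeding column by column from the rightmost column $n$ down to column $2$, and within each column from the bottom row $n$ up to row $2$. The key engine is the same one used in Lemma \ref{lem:ShtakesSntoSh}: at each mutation one writes the exchange relation \eqref{eq:ExRltn}, identifies the right-hand side as the Desnanot--Jacobi expansion \eqref{eq:DesJacId}, and reads off the new cluster variable as a single minor. First I would treat the rightmost column $n$, where no column wrapping occurs and the local quiver around each mutated vertex is isomorphic to the standard $SL_n$ quiver; here mutating $(i,n)$ bottom-to-top should produce $\varphi^{1}_{i,n}=f^{(1)}_{i-1,n-1}$ by exactly the computation that yielded $\varphi_{ij}'=\varphi^{(1)}_{i-1,i-1}$ in Lemma \ref{lem:ShtakesSntoSh}. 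Since every entry in column $n$ has $j=n>i$, this matches the claimed formula $f^{(1)}_{i-1,j-1}$ for the $j>i$ case.

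Next I would handle the generic interior columns $2\le j\le n-1$, again from the bottom up. For vertices strictly above the diagonal ($j>i$) the argument is the same Desnanot--Jacobi step as before, giving $\varphi^{1}_{ij}=f^{(1)}_{i-1,j-1}$. The substantive part is the on-and-below-diagonal case $j\le i$, where the claimed answer is the \emph{twisted} minor $\tilde f^{[1]}_{i-1,j-1}=f^{(1)}_{i-1,j-1}f_{21}-f^{(1)\downarrow}_{i-1,j-1}f^{\uparrow}_{21}$. This is where the special structure of $Q_{1\mapsto n-1}$ enters: the added arrows from item (2)--(4) of the quiver description (in particular those involving the unfrozen vertex $(2,1)=(\alpha+1,1)$ with $\alpha=1$ and $(1,n)=(1,\beta+1)$ with $\beta=n-1$) modify the exchange monomials so that the factors $f_{21}$ and $f^{\uparrow}_{21}$ appear. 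I expect that, as in the $k=n-\alpha$ case of Lemma \ref{lem:ShtakesSntoSh} where the answer acquired the extra binomial $f_{i-1,j-1}^{(1)}f_{1,\beta+1}-f_{i-1,j-1}^{(1)\rightarrow}f_{1,\beta+1}^{\leftarrow}$, one obtains here the analogous expression built from $f_{21}$, which is precisely $\tilde f^{[1]}_{i-1,j-1}$. I would carry out this computation once on a single representative diagonal/subdiagonal vertex and then invoke an induction (both down the column and across columns) asserting that the local picture repeats, citing $T_m\subset T_{m-1}$ so the nested structure of the sequences is respected.

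For part (2), once the cluster variables are known I would read the resulting quiver off the mutation bookkeeping: after the full sequence $T_1$ the frozen vertices $(2,j)$, $(i,2)$, together with $(1,n)$ and $(2,1)$, absorb all remaining arrows incident to the first row and first column, so the vertices $(1,j)$ and $(i,1)$ lose every arrow to a mutable vertex and become isolated. This is the direct analogue of the phenomenon in Lemma \ref{lem:ShtakesSntoSh} where $(1,1)$ became isolated after freezing $(2,2)$. Restricting to $i,j\in\{2,\dots,n\}$ and relabeling $(i,j)\mapsto(i-1,j-1)$, the surviving arrows are exactly those of the standard pattern (down, right, and anti-diagonal up-left), so the subquiver is isomorphic to $Q_{std}(n-1)$.

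The main obstacle I anticipate is bookkeeping the quiver near the special vertices $(1,n)$ and $(2,1)$ throughout the sweep: because these are unfrozen and carry the extra dashed arrows of $Q_{1\mapsto n-1}$, the clean ``locally standard'' induction breaks down precisely on the diagonal and in the first column/row, and one must verify carefully that the exchange monomials pick up exactly the factors $f_{21}$ and $f^{\uparrow}_{21}$ and nothing more. Controlling the arrow changes at those vertices so that the Desnanot--Jacobi identity applies verbatim — and confirming that the binomial structure of $\tilde f^{[1]}_{i-1,j-1}$ propagates unchanged down each column — is the delicate step; everything else reduces to the already-established standard-quiver computation.
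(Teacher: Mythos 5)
Your overall strategy --- tracking the variables through $T_1$ step by step and resolving each exchange relation with the Desnanot--Jacobi identity \eqref{eq:DesJacId} --- is the same as the paper's, but two of your concrete structural claims are wrong, and they would derail the induction at its very first step. First, column $n$ is \emph{not} locally standard and does not consist only of the plain minors $f^{(1)}_{i-1,n-1}$: the corner $(n,n)$ has $i=j$, so it falls in the $j\le i$ case (your assertion that ``every entry in column $n$ has $j=n>i$'' overlooks it), and the very first mutation of $T_1$ already produces the twisted variable $\varphi^1_{nn}=x_{n-1,n-1}f_{21}-x_{n,n-1}f^{\uparrow}_{21}=\tilde f^{[1]}_{n-1,n-1}$, because the dashed arrows $(n,n)\to(2,1)$ and $(2,1)\to(n-1,n)$ of $Q_{1\mapsto n-1}$ put $\varphi_{21}$ into the exchange monomial. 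Second, and more fundamentally, you have missed that for $\beta=n-1$ the special matrices $\tilde M_{ij}$ sit on the superdiagonal: the \emph{initial} cluster variables $\varphi_{i,i+1}$ are already the twisted functions $\tilde f_{i,i+1}$, not minors of $X$. Consequently the $j>i$ computations are not ``the same Desnanot--Jacobi step as before'': at $(n-1,n)$ the exchange mixes $\tilde f^{[1]}_{n-1,n-1}$ with $\tilde f_{n-2,n-1}$, and at each superdiagonal vertex $(j-1,j)$ the relation involves two twisted functions, so one must split it into the part multiplied by $f_{21}$ and the part multiplied by $f^{\uparrow}_{21}$ and apply \eqref{eq:DesJacId} to each separately with different row sets (the paper uses $A=X_{[j-2,n-1]}^{[j-1,n]}$ for one and $A=X_{[j-2,\ldots,n-2,n]}^{[j-1,n]}$ for the other) to see that the twists cancel and leave the plain minor $f^{(1)}_{j-2,j-1}$. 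Your scheme, in which twisting enters only for $j\le i$ through the extra arrows, cannot account for either phenomenon.

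A smaller but real omission: $T_1$ mutates column $n$ up to row $1$, not row $2$; the mutation at $(1,n)$ yields $\varphi^1_{1n}=x_{n1}$ via the wrap-around neighbors $(n,2)$ and $(n,1)$, which is exactly why $(1,n)$ appears among the vertices frozen in part 2 (and this identity, \eqref{eq:xn1isClVar}, is reused later in Lemma \ref{lem:xijinA1n-1}). Your part-2 argument is in the right spirit --- read the isolation of row $1$ and column $1$ and the standardness of the surviving subquiver off the arrow bookkeeping --- but it inherits the gaps above: the arrow changes at the bottom-row mutations $(n,j)$, where the vertex $(2,1)$ repeatedly gains and loses arrows, are part of what must be verified, so part 2 cannot be established until the variable computations are redone with the twisted superdiagonal and the corner $(n,n)$ handled correctly.
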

\begin{proof}
We look at the sequence $T_{1}$ step by step: start with mutating
the initial cluster at $\left(n,n\right)$. The relevant part of the
quiver is given in Fig. \ref{fig:T1atnn}. So the exchange relation
is 
\begin{eqnarray*}
\varphi_{nn}\varphi_{nn}^{1} & = & \varphi_{n-1,n-1}\cdot\varphi_{21}+\varphi_{n-1,n}\cdot\varphi_{n,n-1}\\
 & = & \left|\begin{array}{rr}
x_{n-1,n-1} & x_{n-1,n}\\
x_{n,n-1} & x_{nn}
\end{array}\right|\cdot f_{21}+\tilde{f}_{n-1,n}x_{n,n-1}
\end{eqnarray*}
\begin{figure}
\begin{centering}
\includegraphics[scale=0.5]{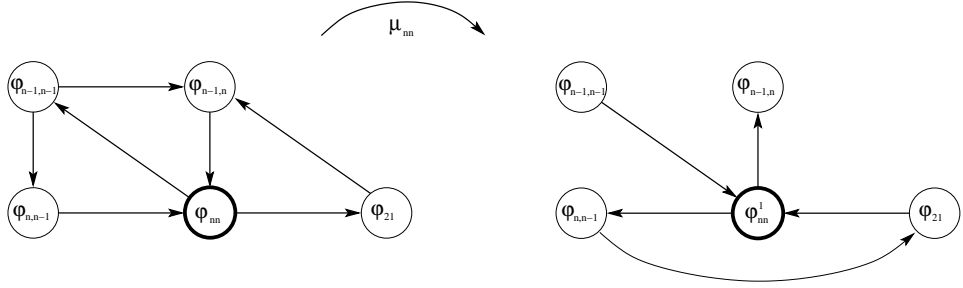} 
\par\end{centering}

\caption{
Mutation $T_{1}$ at vertex $\left(n,n\right)$
}

\label{fig:T1atnn} 
\end{figure}
and since $f_{n-1,n}=x_{n-1,n},$ it comes down to 
\[
\varphi_{nn}\varphi_{nn}^{1}=x_{nn}\left(x_{n-1,n-1}f_{21}-x_{n,n-1}f_{21}^{\uparrow}\right)
\]
which means 
\begin{equation}
\varphi_{nn}^{1}=x_{n-1,n-1}f_{21}-x_{n,n-1}f_{21}^{\uparrow}=\tilde{f}_{n-1,n-1}^{[1]}.
\end{equation}
The arrows $\left(n-1,n-1\right)\to\left(n,n-1\right),\left(n-1,n-1\right)\to\left(n-1,n\right)$
and $\left(2,1\right)\to\left(n-1,n\right)$ are removed, and an arrow
$\left(n,n-1\right)\to\left(2,1\right)$ is added. All the arrows
that touch $\left(n,n\right)$ are inverted (as shown in Fig. \ref{fig:T1atnn}).

Now mutate at $\left(n-1,n\right)$. The exchange relation is determined
by the partial quiver in Fig. \ref{fig:T1atkn}:
\begin{eqnarray*}
\varphi_{n-1,n}\varphi_{n-1,n}^{1} & = & \varphi_{n-2,n}\varphi_{nn}^{1}+\varphi_{n-2,n-2}\\
 & = & x_{n-2,n}\tilde{f}_{n-1,n-1}^{[1]}+\tilde{f}_{n-2,n-1}\\
 & = & x_{n-2,n-1}\tilde{f}_{n-1,n}
\end{eqnarray*}
\begin{figure}
\begin{centering}
\includegraphics[scale=0.5]{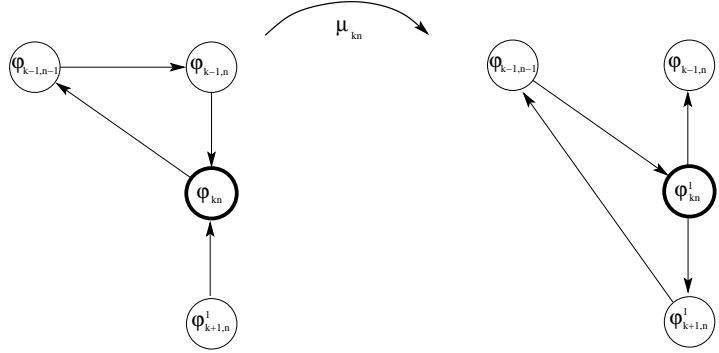} 
\par\end{centering}

\caption{
Mutation $T_{1}$ at vertex $\left(k,n\right)$
}

\label{fig:T1atkn} 
\end{figure}
and since we had $\varphi_{n-1,n}=\tilde{f}_{n-1,n}$, we get 
\begin{equation}
\varphi_{n-1,n}^{1}=x_{n-2,n-1}=f_{n-2,n-1}^{\left(1\right)}.
\end{equation}
The arrow $\left(k-1,n-1\right)\to\left(k-1,n\right)$ was removed,
and an arrow $\left(k+1,n\right)\to\left(k-1,n-1\right)$ was added.
All the arrows touching $\varphi_{kn}$ were inverted. 

The sequence continues mutating at $\left(k,n\right)$ with $k=n-2,n-3,\ldots,2$:
locally the quiver looks like in Fig. \ref{fig:T1atkn} and so 
\begin{eqnarray*}
\varphi_{kn}\varphi_{kn}^{1} & = & \varphi_{k+1,n}^{1}\varphi_{k-1,n}+\varphi_{k-1,n-1}\\
 & = & x_{k,n-1}x_{k-1,n}+\left|\begin{array}{rr}
x_{k-1,n-1} & x_{k-1,n}\\
x_{k,n-1} & x_{kn}
\end{array}\right|\\
 & = & x_{k-1,n-1}x_{kn},
\end{eqnarray*}
 and so 
\[
\varphi_{kn}^{1}=x_{k-1,n-1}=f_{k-1,n-1}^{\left(1\right)}.
\]
The last mutation on column $n$ is at the vertex $\left(1,n\right)$.
The exchange relation is determined by Fig. \ref{fig:T1atkn} with
$k=1$, so now $k-1$ should be replaced with $n$, and so 
\begin{eqnarray*}
\varphi_{1n}\varphi_{1n}^{1} & = & \varphi_{2n}^{1}\varphi_{n2}+\varphi_{n1}\\
 & = & x_{1,n-1}x_{n2}+\left|\begin{array}{rr}
x_{n1} & x_{n2}\\
x_{1,n-1} & x_{1n}
\end{array}\right|\\
 & = & x_{n1}x_{1n}.
\end{eqnarray*}
 Therefore 
\begin{equation}
\varphi_{1n}^{1}=x_{n1}.\label{eq:xn1isClVar}
\end{equation}
Assume now that columns $n,\ldots j+1$ were mutated, and look at
the sequence $T_{1}$ on the column $j.$ Start at $\left(n,j\right)$
where the exchange relation is shown in Fig. \ref{fig:T1atnj}: 
\begin{eqnarray*}
\varphi_{n,j}\varphi_{n,j}^{1} & = & \varphi_{n,j-1}\varphi_{n,j+1}^{1}+\varphi_{n-1,j-1}\varphi_{21}\\
 & = & x_{n,j-1}\tilde{f}_{n-1,j}^{[1]}+f_{n-1,j-1}f_{21}\\
 & = & x_{n,j}\tilde{f}_{n-1,j-1}^{[1]}
\end{eqnarray*}
\begin{figure}
\begin{centering}
\includegraphics[scale=0.5]{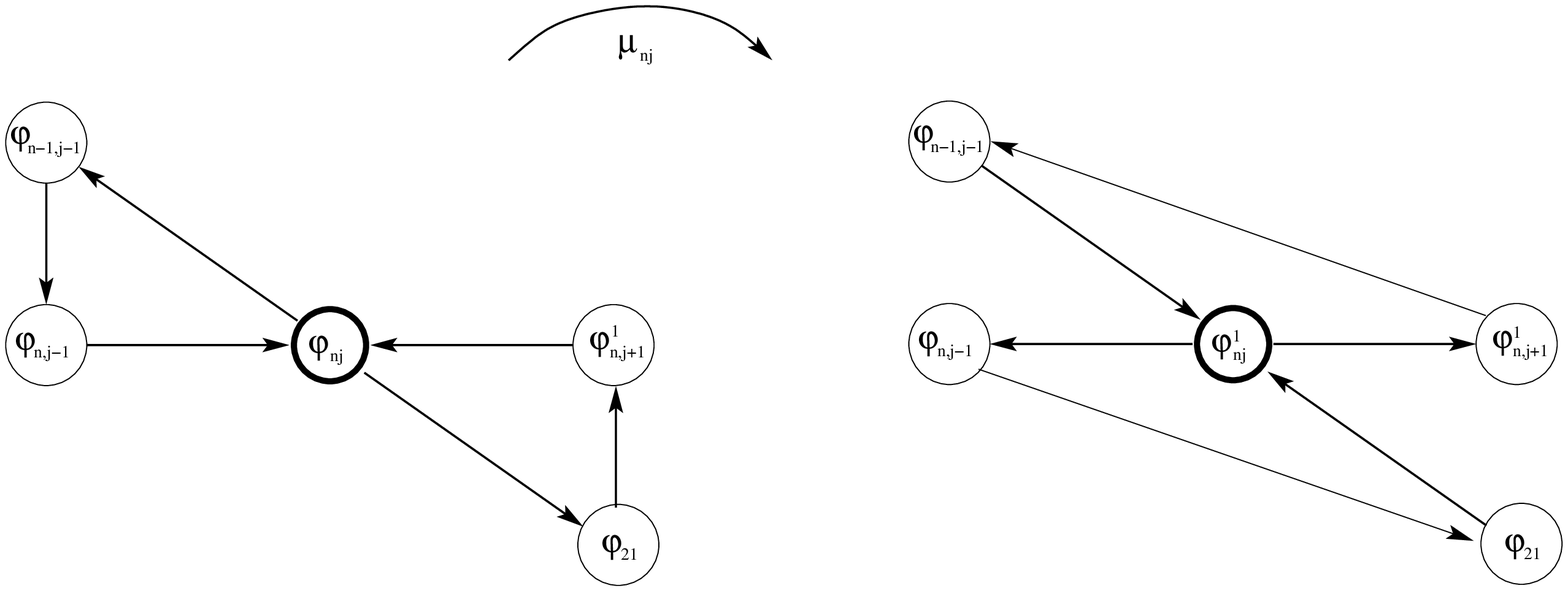} 
\par\end{centering}

\caption{
Mutation $T_{1}$ at vertex $\left(n,j\right)$
}

\label{fig:T1atnj} 
\end{figure}
so 
\[
\varphi_{n,n-1}^{1}=\tilde{f}_{n-1,j-1}^{[1]}.
\]
 The arrows $\left(n-1,j-1\right)\to\left(n,j-1\right)$ and $\left(2,1\right)\to\left(n,j+1\right)$
were removed, and two arrows were added: $\left(n,j-1\right)\to\left(n-1,j-1\right)$
and $\left(n,j-1\right)\to\left(2,1\right)$. The arrows that touch
$\left(n,j\right)$ were inverted. 

We move on, mutating at $\left(k,j\right)$ 
with $k>j$. The exchange relation is now given in Fig. \ref{fig:T1atkgj}
\begin{eqnarray*}
\varphi_{kj}\varphi_{kj}^{1} & = & \varphi_{k,j-1}\varphi_{k,j+1}^{1}+\varphi_{k-1,j-1}\varphi_{k+1,j+1}^{1}\\
 & = & f_{k,j-1}\tilde{f}_{k,j+1}^{[1]}+f_{k-1,j-1}\tilde{f}_{k+1,j+1}^{[1]}\\
 & = & f_{21}\left(f_{k,j-1}f_{k,j+1}^{\left(1\right)}+f_{k-1,j-1}f_{k,j}^{\left(1\right)}\right)\\
 &  & -f_{21}^{\uparrow}\left(f_{k,j-1}f_{k,j+1}^{\left(1\right)\downarrow}+f_{k-1,j-1}f_{k+1,j+1}^{\left(1\right)\downarrow}\right)
\end{eqnarray*}
\begin{figure}
\begin{centering}
\includegraphics[scale=0.5]{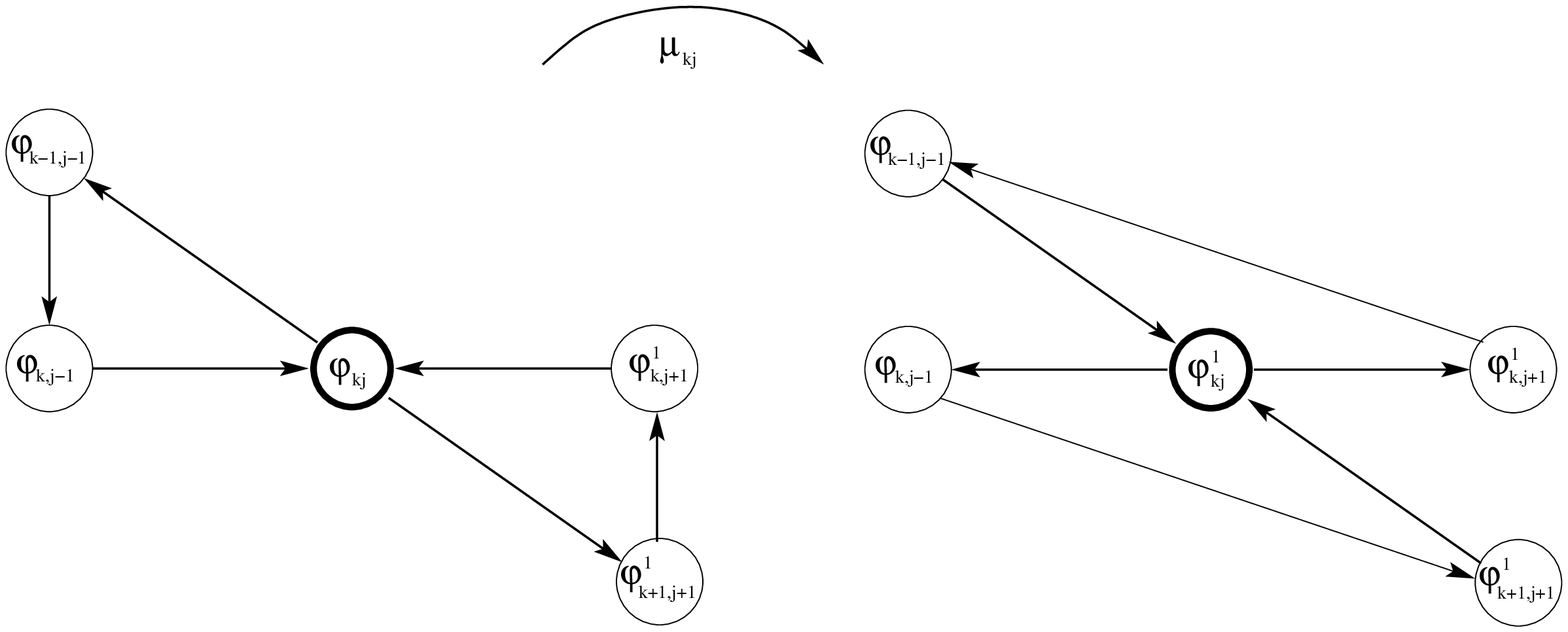} 
\par\end{centering}

\caption{
Mutation $T_{1}$ at vertex $\left(k,j\right)$ with $k>j$
}

\centering{}\label{fig:T1atkgj} 
\end{figure}
 and we can use \eqref{eq:DesJacId} with
$A=X_{\left[k-1\ldots n\right]}^{\left[j-1\ldots\ell\right]}$ (with
$\ell=n-k+j$), so the first parenthesis is just $f_{kj}f_{k-1,j-1}^{\left(1\right)}$
and the second one is $f_{kj}f_{k-1,j-1}^{\downarrow}$, and so 
\[
\varphi_{kj}^{1}=\tilde{f}_{k-1,j-1}^{[1]}.
\]
 After this mutation the arrows $\left(k-1,j-1\right)\to\left(k,j-1\right)$
and $\left(k+1,j+1\right)\to\left(k,j+1\right)$ were removed, and
two arrows were added: $\left(k,j-1\right)\to\left(k+1,j+1\right)$
and $\left(k,j+1\right)\to\left(k-1,j-1\right)$. The arrows that
touch $\left(k,j\right)$ were inverted.

Next look at the mutation on the main diagonal, at $\left(j,j\right)$:
Fig. \ref{fig:T1atjj} describes the exchange relation:
\begin{eqnarray*}
\varphi_{jj}\varphi_{jj}^{1} & = & \varphi_{j-1,j-1}\varphi_{j+1,j+1}^{1}+\varphi_{j,j-1}\varphi_{j-1,j}\\
 & = & f_{j-1,j-1}\tilde{f}_{jj}^{[1]}+f_{j,j-1}\tilde{f}_{j-1,j}^{[1]}\\
 & = & f_{21}\left(f_{j-1,j-1}f_{jj}^{\left(1\right)}+f_{j,j-1}f_{j-1,j}^{\left(1\right)}\right)\\
 &  & -f_{21}^{\uparrow}\left(f_{j-1,j-1}f_{jj}^{\left(1\right)\downarrow}+f_{j,j-1}f_{j-1,j}^{\left(1\right)\downarrow}\right)
\end{eqnarray*}
\begin{figure}
\begin{centering}
\includegraphics[scale=0.5]{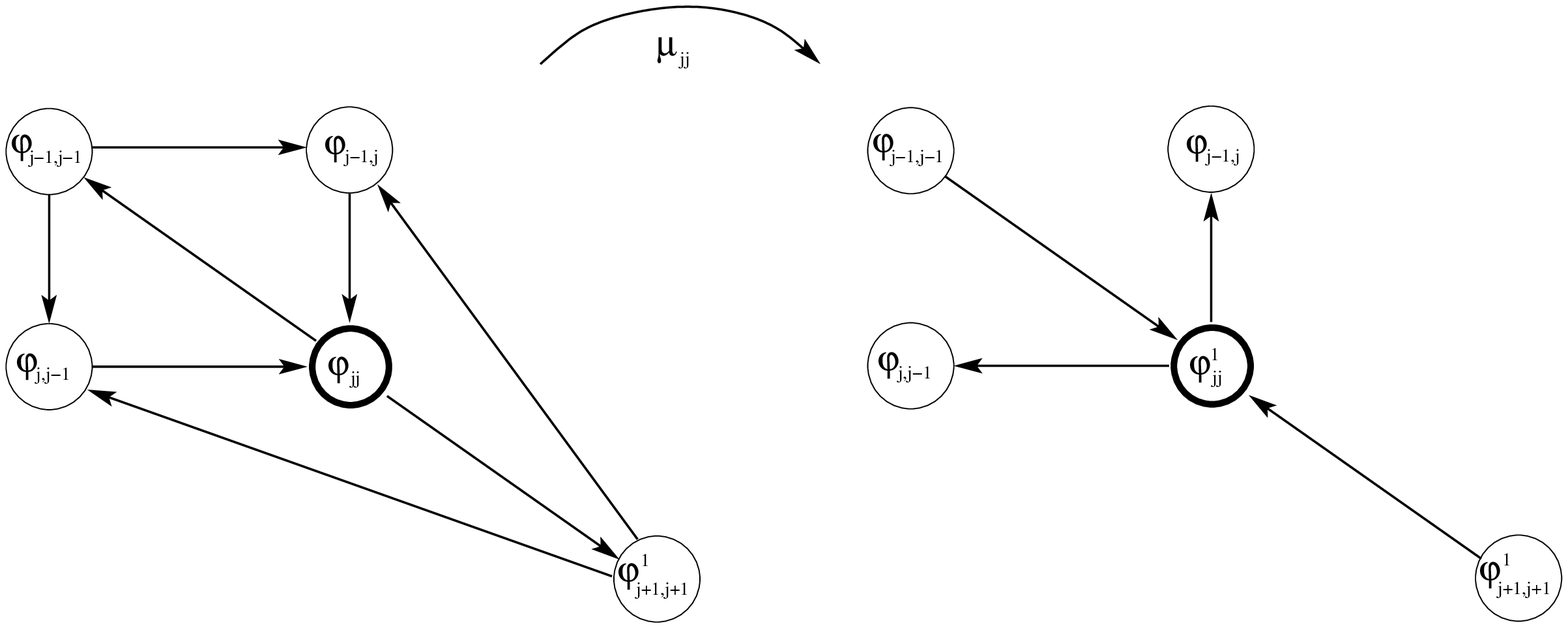} 
\par\end{centering}

\caption{
Mutation $T_{1}$ at vertex $\left(j,j\right)$
}

\label{fig:T1atjj} 
\end{figure}

and we use \eqref{eq:DesJacId} again, now with $A=X_{\left[j-1\ldots n\right]}^{\left[j-1\ldots n\right]}$
to get $f_{jj}f_{j-1,j-1}^{\left(1\right)}$ in the first parenthesis
and $f_{jj}f_{j-1,j-1}^{\left(1\right)\downarrow}$ in the second
one. Thus the exchanged variable is 
\[
\varphi_{jj}^{1}=\tilde{f}_{j-1,j-1}.
\]
 Four arrows were removed: $\left(j-1,j-1\right)\to\left(j,j-1\right),\left(j-1,j-1\right)\to\left(j-1,j\right),\left(j+1,j+1\right)\to\left(\left(j,j-1\right)\right)$
and $\left(j+1,j+1\right)\to\left(j-1,j\right)$. The four arrows
that touch $\left(j,j\right)$ were inverted.

proceeding along column $j$ we mutate now at $\left(k,j\right)$
where $k<j$. The exchange relation here is 
\begin{equation}
\varphi_{kj}\varphi_{kj}^{1}=\varphi_{k-1,j-1}\varphi_{k+1,j+1}^{1}+\varphi_{k-1,j}\varphi_{k+1,j}^{1},\label{eq:er@kj_k<j}
\end{equation}
 and there are two cases here:

First, if $k=j-1$ then \eqref{eq:er@kj_k<j} reads 
\begin{eqnarray*}
\varphi_{kj}\varphi_{kj}^{1} & = & \varphi_{j-2,j-1}\varphi_{j,j+1}^{1}+\varphi_{j-2,j}\varphi_{j,j}^{1}\\
 & = & \tilde{f}_{j-2,j-1}f_{j-1,j}^{\left(1\right)}+f_{j-2,j}\tilde{f}_{j-1,j-1}^{[1]}\\
 & = & f_{21}\left(f_{j-2,j-1}f_{j-1,j}^{\left(1\right)}+f_{j-2,j}f_{j-1,j-1}^{\left(1\right)}\right)\\
 &  & -f_{21}^{\uparrow}\left(f_{j-2,j-1}f_{j-1,j}^{\left(1\right)\downarrow}+f_{j-2,j}f_{j-1,j-1}^{\left(1\right)\downarrow}\right)
\end{eqnarray*}
\begin{figure}
\begin{centering}
\includegraphics[scale=0.5]{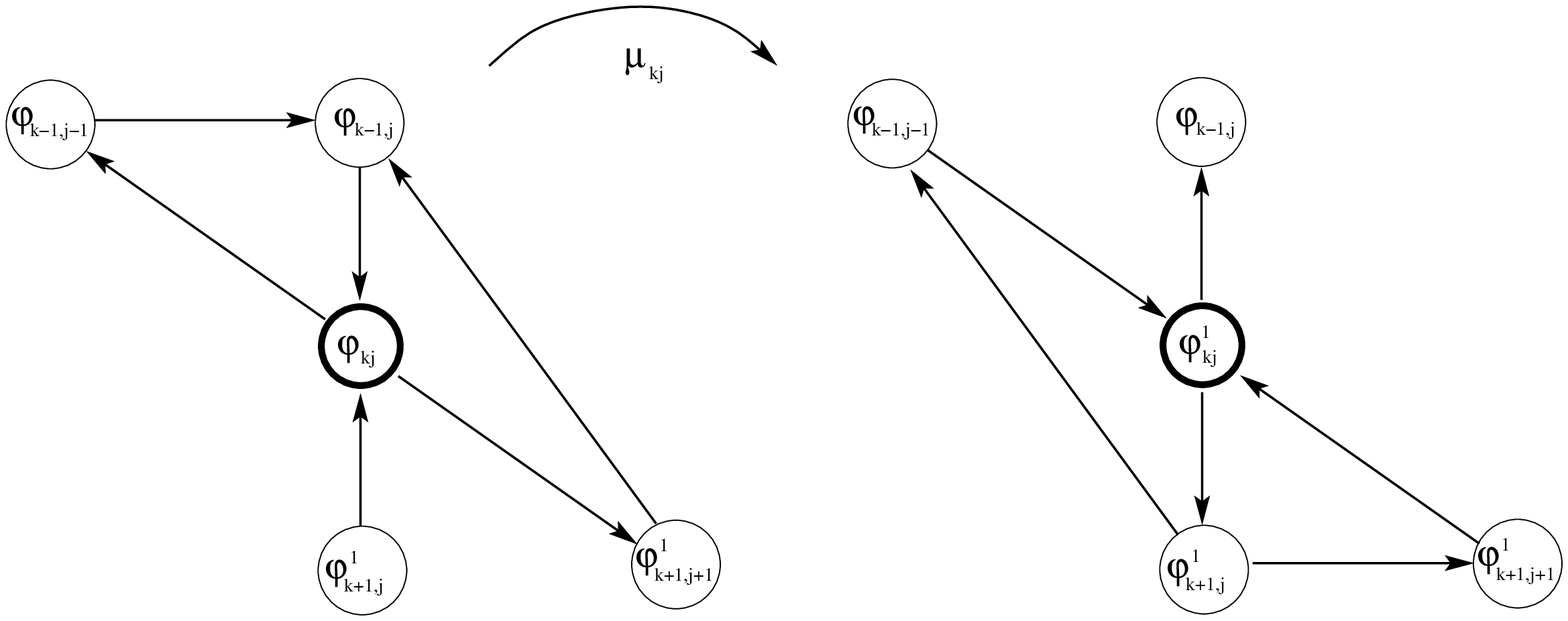} 
\par\end{centering}

\caption{
Mutation $T_{1}$ at vertex $\left(k,j\right)$ with $k<j$
}

\label{fig:T1atklj-1} 
\end{figure}

and using \eqref{eq:DesJacId} with $A=X_{\left[j-2\ldots n-1\right]}^{\left[j-1\ldots n\right]}$
yields $f_{j-1,j}f_{j-2,j-1}^{\left(1\right)}$ in the first parenthesis,
and with $A=X_{\left[j-2\ldots n-2,n\right]}^{\left[j-1\ldots n\right]}$
gives $f_{j-1,j}f_{j-2,j-1}^{\left(1\right)}$ in the second one.
Recall that $\varphi_{j-1,j}=\tilde{f}_{j-1,j}$ and therefore 
\[
\varphi_{j-1,j}^{1}=f_{j-2,j-1}^{\left(1\right)}.
\]
The second case is when $k<j-1$ and then \eqref{eq:er@kj_k<j} reads
\[
\varphi_{kj}\varphi_{kj}^{1}=f_{k-1,j-1}f_{k,j}^{\left(1\right)}+f_{k-1,j}f_{k,j-1}^{\left(1\right)}
\]
 and with \eqref{eq:DesJacId} on the matrix $A=X_{\left[k-1\ldots\ell\right]}^{\left[j-1\ldots n\right]}$
(here $\ell=n+k-j$) it comes down to 
\[
\varphi_{kj}^{1}=f_{k-1,j-1}^{\left(1\right)}.
\]

In both cases the quiver changes are: the arrows $\left(k-1,j-1\right)\to\left(k-1,j\right)$
and $\left(k+1,j+1\right)\to\left(k-1,j\right)$ were removed and
$\left(k+1,j\right)\to\left(k-1,j-1\right)$ and $\left(k+1,j\right)\to\left(k+1,j+1\right)$
were added. \end{proof}
\begin{prop}
\label{prop:SeedS2}In the seed $\Sigma_{2}$, 

1. for all $i,j\in\left\{ 3,\ldots,n\right\} $
the cluster variables take the form 
\begin{eqnarray*}
\varphi_{ij}^{2} & = & f_{i-2,j-2}^{\left(2\right)};
\end{eqnarray*}

2. after freezing all the vertices $\left(3,j\right)$ and $\left(i,3\right)$,
all vertices $\left(2,j\right)$ and $\left(i,2\right)$ become isolated
and can be ignored. The subquiver on vertices $i,j\in\left\{ 3,\ldots,n\right\} $
is isomorphic to the standard quiver $Q_{std}(n-2)$ on $SL_{n-2}$.\end{prop}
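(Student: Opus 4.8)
The plan is to imitate the proof of Proposition~\ref{prop:SeedS1} one size down, running $T_2$ a single mutation at a time, column by column from right to left and, within each column, from bottom to top. The conceptual point that makes this possible is part~2 of Proposition~\ref{prop:SeedS1}: once the vertices $(2,j)$, $(i,2)$, $(1,n)$ and $(2,1)$ are frozen, the exotic arrows of $Q_{1,n-1}(n)$ have all been absorbed, and the block $\{2,\ldots,n\}^2$ on which $T_2$ acts carries only the arrows of the standard quiver $Q_{std}(n-1)$. Under this identification $T_2$ is precisely the analogue for $SL_{n-1}$ of the sequence $T_1$ for $SL_n$, so every exchange relation it produces is the generic three-term relation of a standard quiver, each resolvable by one application of the Desnanot--Jacobi identity~\eqref{eq:DesJacId}. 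This is the same mechanism used at the interior vertices of Proposition~\ref{prop:SeedS1}, and it transfers unchanged above the diagonal, turning $\varphi^1_{ij}=f^{(1)}_{i-1,j-1}$ into $\varphi^2_{ij}=f^{(2)}_{i-2,j-2}$ by deleting one further row and column.

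The genuinely new point, and the step I expect to be the main obstacle, is the behaviour on and below the diagonal, where the variables enter $\Sigma_1$ as the products $\tilde f_{i-1,j-1}^{[1]}=f_{i-1,j-1}^{(1)}f_{21}-f_{i-1,j-1}^{(1)\downarrow}f_{21}^{\uparrow}$ but must leave $\Sigma_2$ as honest minors $f^{(2)}_{i-2,j-2}$. I would prove this tilde-removal by induction along the mutation path, organized exactly as the diagonal computation of Proposition~\ref{prop:SeedS1} but run in reverse: at a vertex $(k,j)$ with $k\ge j$ I would write the three-term exchange relation in the $\varphi^1$-variables, substitute the explicit tilde'd forms, and then select the contiguous matrix $A$ (one row and one column smaller than its counterpart in Proposition~\ref{prop:SeedS1}, reflecting the extra superscript) to which \eqref{eq:DesJacId} applies. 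The frozen factors $f_{21}$ and $f_{21}^{\uparrow}$ should recombine as in the passage from $\varphi_{nn}=x_{nn}$ to $\varphi^1_{nn}=\tilde f_{n-1,n-1}^{[1]}$, only now collapsing rather than creating the tilde, so that after cancelling the mutated variable $\varphi^1_{kj}$ the surviving factor is the plain minor $f^{(2)}_{k-2,j-2}$. Care is needed in the base case, the corner $(n,n)$ where $\varphi^1_{nn}=\tilde f_{n-1,n-1}^{[1]}$, and along the subdiagonal, where both terms of the exchange relation are themselves tilde'd and $A$ must be chosen so that the two parentheses factor through a common minor of $A$.

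For part~2 I would track the arrows created and destroyed by $T_2$ using the same local pictures as in Proposition~\ref{prop:SeedS1}: each mutation at $(k,j)$ reverses the arrows at that vertex and rewires its diagonal neighbours, and after the whole sequence the corner $(2,2)$ plays the role that $(1,1)$ played for $T_1$. Freezing $(3,j)$ and $(i,3)$ then isolates all of row $2$ and column $2$, which may be discarded, leaving on $\{3,\ldots,n\}^2$ the standard quiver $Q_{std}(n-2)$ on $SL_{n-2}$, arrow for arrow. Since this bookkeeping is identical to that of Proposition~\ref{prop:SeedS1} shifted by one, it requires no new idea, and the whole proposition reduces to ``Proposition~\ref{prop:SeedS1} run one size smaller on a standard quiver'', the only substantive new calculation being the collapse of the below-diagonal products $\tilde f^{[1]}$ to plain minors.
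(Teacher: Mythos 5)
Your proposal is correct and takes essentially the same route as the paper, whose proof likewise runs the mutations of $T_{2}$ step by step, resolves every exchange relation with the Desnanot--Jacobi identity \eqref{eq:DesJacId}, and handles the tilde'd variables by observing that exactly two functions of the form $g\,f_{21}-g^{\downarrow}f_{21}^{\uparrow}$ occur in each such relation, attached by oppositely directed arrows, so that the relation splits into an $f_{21}$-part and an $f_{21}^{\uparrow}$-part each of which factors through a common minor after applying \eqref{eq:DesJacId} --- precisely your collapsing mechanism. The quiver bookkeeping in part 2 is treated the same way you describe, by repeating the local pictures of Proposition \ref{prop:SeedS1} one size down.
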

\begin{proof}
Following the pattern of the proof of Proposition \ref{prop:SeedS1},
we can look at the mutations step by step. All exchange relations
can be resolved using the Desnanot--Jacobi identity \eqref{eq:DesJacId}.
If all the functions in the exchange relation are proper minors of
the matrix $X$, this is pretty much straightforward. If the exchange
relation involves a function of the form $\tilde{f}_{ij}$, it is
still not too hard: the structure of the quiver assures there must
be two such functions (this can be easily proved by induction), each
of which has the form $g\cdot f_{21}+g^{\downarrow}f_{21}^{\uparrow}$.
The arrows that connect the corresponding two vertices to the vertex
associated with the exchanged variable point in opposite directions
(one towards this vertex and the other away from it. This is also
not hard to see). Therefore, the exchange relation can be broken into
two parts: the first has only determinants of dense submatrices of
$X$ multiplied by $f_{21}$. The second one has determinants of same
submatrices with just one row replaced by another (recall that if
$g=\det X_{[i,k]}^{[j,\ell]}$ then $g^{\downarrow}=\det X_{[i,...,k-1,k+1]}^{[j,\ell]}$).
Using \eqref{eq:DesJacId} on each part separately yields the result.\end{proof}
\begin{prop}
\label{prop:Seed>2}For $m\ge3$, 

1. The subquiver of $Q^{m}$ of rows $m+1,\ldots,n$ and columns $m+1,\ldots,n$
is isomorphic to the standard quiver $Q_{std}(n-m)$ (on $SL_{n-m}$).

2. The functions $\varphi_{ij}^{m}\in\mathcal{B}_{m}$ with $j>i$
are 
\[
\varphi_{ij}^{m}=f_{i-m,j-m}^{\left(m\right)}.
\]
\end{prop}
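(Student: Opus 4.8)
The plan is to prove both assertions simultaneously by induction on $m$, with $m=3$ as the base case, reducing each inductive step to a computation that takes place entirely inside the \emph{standard} cluster structure. The key observation is that all the features specific to the Belavin--Drinfeld triple have already been absorbed by the time we reach $\Sigma_{2}$: Proposition \ref{prop:SeedS2} tells us that the subquiver of $\Sigma_{2}$ on rows and columns $\{3,\ldots,n\}$ is exactly the standard quiver $Q_{std}(n-2)$, and that \emph{every} cluster variable attached to it is an honest solid (contiguous) minor $f_{i-2,j-2}^{(2)}$, with none of the $f_{21}$-twisted factors $\tilde f_{ij}$ that appeared in $\Sigma_{1}$. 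Since the frozen boundary of this block is precisely its first row and column, the block is literally a standard seed on $SL_{n-2}$, and the base case $m=3$ amounts to the assertion that applying the mutation sequence $T_{3}$ to such a seed peels off one further layer.

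For the inductive step I would carry a hypothesis slightly stronger than the statement itself: that in $\Sigma_{m-1}$ the subquiver on rows and columns $\{m,\ldots,n\}$ is $Q_{std}(n-m+1)$ and that the whole inner block --- diagonal and both triangles --- consists of solid minors of the matrix obtained from $X$ by deleting its last $m-1$ rows and columns. This is exactly what Proposition \ref{prop:SeedS2} supplies at the base case. Granting it, the sequence $T_{m}$ mutates only the vertices in rows and columns $\{m+1,\ldots,n\}$ (by its very definition), so it acts entirely within this standard block against the frozen boundary formed by row $m$ and column $m$. Because every neighbour of every mutated vertex is then a solid minor, each exchange relation is of Dodgson type and is dispatched by a single application of the Desnanot--Jacobi identity \eqref{eq:DesJacId} to a suitable square submatrix $A=X_{[i-m,\,\ell]}^{[j-m,\,n-m]}$ (with $\ell$ chosen so that $A$ is square). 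This is precisely the bookkeeping already performed in the proofs of Propositions \ref{prop:SeedS1} and \ref{prop:SeedS2}, only strictly simpler, since there is no longer any $f_{21}$ twist to split off.

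Carrying out that bookkeeping does two things at once. Tracking how each mutation rearranges the incident arrows, and iterating over the full sequence $T_{m}$, shows that the inner block $\{m+1,\ldots,n\}$ of $\Sigma_{m}$ is again the standard quiver, now $Q_{std}(n-m)$; this is part 1. Tracking the exchanged function through the same Desnanot--Jacobi reductions shows that the new variable at a vertex $(i,j)$ with $j>i$ is $f_{i-m,j-m}^{(m)}$; this is part 2. Part 1 is largely combinatorial and can be read off at the quiver level alone, so the genuine content is the simultaneous control of the cluster variables.

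The step I expect to be the main obstacle is the interface with the main diagonal. A vertex $(i,j)$ with $j=i+1$ has the diagonal vertices $(i,i)$ and $(i+1,i+1)$ among its neighbours, so the exchange relation for an upper-triangular variable adjacent to the diagonal feeds on diagonal and lower-triangular functions. This is exactly why the induction cannot be run with only the $j>i$ information recorded in the statement, and why the hypothesis must be strengthened to describe the entire inner block; the delicate point is to verify that after $T_{m}$ the diagonal and sub-diagonal variables are still solid minors of the once-more-truncated matrix, so that the Desnanot--Jacobi reduction remains available at the next stage. Once this is secured, the restriction to $j>i$ in the statement is merely a convenience: it records only those variables that are needed to recover the entries $x_{ij}$ further downstream.
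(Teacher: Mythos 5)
Your proposal is correct and takes essentially the same approach as the paper: the paper's proof is precisely this induction on $m$, grounded in Propositions \ref{prop:SeedS1} and \ref{prop:SeedS2}, with the observation that all the relevant exchange relations look like the standard ones and are resolved by the Desnanot--Jacobi identity \eqref{eq:DesJacId}. Your strengthening of the inductive hypothesis to the whole inner block (diagonal and lower triangle included) is exactly what the paper leaves implicit --- Proposition \ref{prop:SeedS2} already supplies the full block, untwisted, at the base, so the restriction to $j>i$ in the statement is, as you note, only a convenience for the downstream application.
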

\begin{proof}
By induction, based on Propositions \ref{prop:SeedS1} and \ref{prop:SeedS2}.
All the relevant exchange relations look like the standard ones.\end{proof}
\begin{cor}
If $i<j<n$, then $x_{ij}$ is a cluster variable.\label{cor:xijinAforalljgi}\end{cor}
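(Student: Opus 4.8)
The plan is to exhibit $x_{ij}$ directly as the cluster variable attached to a specific vertex of one of the seeds $\Sigma_m$ built from the mutation sequences $T_1,\dots,T_m$. Since $\Sigma_m$ is obtained from the initial seed $\Sigma_{0}=\left(\mathcal{B}_{1,n-1},Q_{1,n-1}(n)\right)$ by a sequence of mutations, it is mutation equivalent to it, so every function attached to a vertex of $\Sigma_m$ is automatically a cluster variable of $\mathcal{C}_{1,n-1}$. It therefore suffices to name one seed and one vertex carrying exactly $x_{ij}$.

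First I would record, from Proposition~\ref{prop:Seed>2} (and from Propositions~\ref{prop:SeedS1} and~\ref{prop:SeedS2} when the relevant index is $1$ or $2$), that each above-diagonal vertex $(p,q)$ with $q>p$ of the seed $\Sigma_m$ carries the function $\varphi_{pq}^{m}=f_{p-m,q-m}^{(m)}$. Given the target pair $(i,j)$ with $i<j<n$, I would then set $m=n-j$ (which is $\ge 1$ precisely because $j<n$) and select the vertex $(p,q)=(i+n-j,\,n)$. The hypothesis $i<j$ is exactly what gives $q=n>i+n-j=p$, so the vertex is above the diagonal; and since $p=i+m$ with $1\le i\le j-1$, one has $p\in\{m+1,\dots,n-1\}$ and $q=n$, so $(p,q)$ lies in the block $\{m+1,\dots,n\}^2$ where the formula is valid.

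It then remains to evaluate $f_{p-m,q-m}^{(m)}=f_{ij}^{(n-j)}$. Using the definition of the shift, for $j>i$ the minor $f_{ij}=\det M_{ij}$ is the square contiguous minor $X_{[i,\,n-j+i]}^{[j,\,n]}$ of size $n-j+1$, so deleting its last $m=n-j$ rows and columns leaves only the entry in row $i$, column $j$. Hence $f_{ij}^{(n-j)}=x_{ij}$, giving $\varphi_{pq}^{m}=x_{ij}$, so $x_{ij}$ is a cluster variable.

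I do not anticipate a genuine difficulty here: the statement is an immediate consequence of the three preceding propositions, and the only content is the index bookkeeping. The one point to watch is verifying that the choice $m=n-j$, $(p,q)=(i+n-j,n)$ collapses the square minor $f_{ij}$ to a single $1\times1$ block, and remembering to invoke the separate small-index cases (Propositions~\ref{prop:SeedS1} and~\ref{prop:SeedS2}) when $n-j\le 2$ rather than relying on Proposition~\ref{prop:Seed>2} alone.
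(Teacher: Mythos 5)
Your proposal is correct and is essentially identical to the paper's proof, which likewise sets $m=n-j$ and reads off $\varphi_{i+m,n}^{m}=f_{i,n-m}^{(m)}=x_{ij}$ from the seed $\Sigma_{m}$. Your extra bookkeeping (checking $q=n>p=i+m$, the index ranges, the collapse of $f_{ij}^{(n-j)}$ to a $1\times1$ minor, and invoking Propositions~\ref{prop:SeedS1} and~\ref{prop:SeedS2} when $m\le 2$) merely makes explicit what the paper leaves implicit.
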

\begin{proof}
Set $m=n-j,$ and then $\varphi_{i+m,n}^{m}=f_{i,n-m}^{\left(m\right)}=x_{ij}$
is a cluster variable.\end{proof}
\begin{lem}
For every $(i,j)\in[n-1]\times[n-1]$ the function $x_{ij}$ belongs
to the upper cluster algebra $\mathcal{\overline{A}}_{1\mapsto n-1}$
.\label{lem:xijinA1n-1}\end{lem}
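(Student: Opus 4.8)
The plan is to peel the matrix entries inward from the boundary, using the cells already handled together with the Desnanot--Jacobi identity \eqref{eq:DesJacId} and the GSV criterion, Lemma \ref{lem:InAGSV}.

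First I would dispose of the cells that are already available. By Corollary \ref{cor:xijinAforalljgi} every strictly upper-triangular entry $x_{ij}$ with $i<j<n$ is a cluster variable, hence lies in $\overline{\mathcal{A}}_{1\mapsto n-1}$; since the statement concerns $(i,j)\in[n-1]\times[n-1]$ we always have $j\le n-1<n$, so the entire strictly upper-triangular part of the block is covered. By Lemma \ref{lem:xinandxnjInA} the whole last row and last column also lie in $\overline{\mathcal{A}}_{1\mapsto n-1}$; these cells sit outside the target block but serve as the base of the induction. What remains is the diagonal together with the lower-triangular part, i.e.\ the cells $(i,j)$ with $j\le i\le n-1$. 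I do not expect a symmetry shortcut here: the only automorphism of the cluster structure coming from Remark \ref{rem:BDIsormrph} that fixes the triple $T_{1\mapsto n-1}$ is the anti-transpose $x_{ij}\mapsto\pm x_{n+1-j,\,n+1-i}$, and this preserves triangularity, so it cannot convert upper-triangular information into lower-triangular information.

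Next I would run a double induction, decreasing $i$ from $n-1$ to $1$ and, for each fixed $i$, decreasing $j$ from $i$ to $1$. For the cell $(i,j)$ I would use the $2\times 2$ contiguous relation
\[
x_{ij}\,x_{i+1,j+1}=g_{ij}+x_{i,j+1}\,x_{i+1,j},\qquad g_{ij}=\det X_{\{i,i+1\}}^{\{j,j+1\}} .
\]
The dense minor $g_{ij}$ equals $f_{ij}^{(m)}$ for the appropriate $m$ and so is a cluster variable in one of the peeled seeds $\Sigma_m$ of Propositions \ref{prop:SeedS2} and \ref{prop:Seed>2}, in particular $g_{ij}\in\overline{\mathcal{A}}_{1\mapsto n-1}$; the three entries $x_{i+1,j+1}, x_{i,j+1}, x_{i+1,j}$ all lie in $\overline{\mathcal{A}}_{1\mapsto n-1}$, being either boundary cells, strictly upper-triangular cells, or cells already treated earlier in the induction (note that $(i+1,j+1)$ and $(i+1,j)$ lie in the already-processed row $i+1$, while $(i,j+1)$ is either strictly upper-triangular or lies to the right of $(i,j)$ in row $i$). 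Since $\overline{\mathcal{A}}_{1\mapsto n-1}$ is a ring, the right-hand side is an element $M_1\in\overline{\mathcal{A}}_{1\mapsto n-1}$ and $x_{ij}=M_1/x_{i+1,j+1}$ with cluster-variable denominator $x_{i+1,j+1}$.

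Finally, to pass from this ratio to genuine membership I would invoke Lemma \ref{lem:InAGSV}, exactly as in the proof of Lemma \ref{lem:xinandxnjInA}: one produces a \emph{second} expression $x_{ij}=M_2/f^{m_2}$ whose denominator $f$ is a cluster variable coprime to $x_{i+1,j+1}$, obtained by mutating at a suitable vertex of a peeled seed so that the resulting exchanged variable plays the role that $\varphi_{1,\beta+1}'$ played in \eqref{eq:xnasecond}; then $M_1/x_{i+1,j+1}=M_2/f^{m_2}$ with $M_1,M_2\in\overline{\mathcal{A}}_{1\mapsto n-1}$ and distinct (hence coprime) irreducible cluster variables forces $x_{ij}\in\overline{\mathcal{A}}_{1\mapsto n-1}$. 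I expect the construction of this second coprime expression to be the main obstacle: for each lower-triangular cell one must exhibit a concrete mutation in some $\Sigma_m$ yielding $x_{ij}$ with denominator distinct from $x_{i+1,j+1}$, and check that its numerator again lies in $\overline{\mathcal{A}}_{1\mapsto n-1}$. All of this is local and is governed by the same quiver combinatorics as Propositions \ref{prop:SeedS1}--\ref{prop:Seed>2}, so once a single representative cell is handled the remaining cells follow by the evident induction.
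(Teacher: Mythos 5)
Your outer layer is sound and coincides with the paper's: the cells with $i<j<n$ come from Corollary \ref{cor:xijinAforalljgi}, and your treatment of row $n-1$ is essentially the paper's identity $x_{n-1,j}=\bigl(\varphi_{n-1,j}+x_{nj}x_{n-1,j+1}\bigr)/x_{n,j+1}$, paired with a second expression over the denominator $f_{21}$. But for the interior lower-triangular cells your scheme has three genuine gaps. (i) Lemma \ref{lem:InAGSV} requires both denominators to be \emph{cluster variables}, and your denominator $x_{i+1,j+1}$ is not known to be one once $(i+1,j+1)$ lies strictly inside the lower triangle: for instance $x_{n-1,n-1}$ is only ever shown to lie in $\overline{\mathcal{A}}_{1\mapsto n-1}$ (itself via Lemma \ref{lem:InAGSV}), never exhibited as a cluster variable, so your induction already stalls at the cell $(n-2,n-2)$. (ii) Your claim that $g_{ij}=\det X_{\{i,i+1\}}^{\{j,j+1\}}=f_{ij}^{(m)}$ with $m=n-i-1$ is a variable of a peeled seed is unsupported exactly where you need it: that function would sit at the vertex $(n-1,\,j+n-i-1)$, which for $j\le i$ lies on or below the diagonal, and Proposition \ref{prop:Seed>2} is stated only for $j>i$. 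Worse, for $i=n-2$ (so $m=1$) the claim is false: by Proposition \ref{prop:SeedS1} the on/below-diagonal variables of $\Sigma_1$ are the entangled functions $\tilde f^{[1]}_{n-2,j}=g_{n-2,j}f_{21}-f_{n-2,j}^{(1)\downarrow}f_{21}^{\uparrow}$, not the dense minors $g_{n-2,j}$ themselves, and no other $\Sigma_m$ contains a minor with row set $\{n-2,n-1\}$. (iii) The second, coprime-denominator expression, which you yourself flag as the main obstacle, is never constructed --- and it is the actual substance of the proof, not a routine check.

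The paper's argument sidesteps all three by dividing by complementary minors rather than single entries. For $i<n-1$ it inducts on the whole southeast rectangle $D_{ij}$ (all $x_{k\ell}$ with $k\ge i$, $\ell\ge j$, except $x_{ij}$) and expands two genuine cluster variables along their upper-left entry:
\[
\varphi_{ij}=x_{ij}f_{i+1,j+1}-p_{1},\qquad \varphi_{ij}^{2}=x_{ij}f_{i+1,j+1}^{(2)}-p_{2},
\]
with $p_{1},p_{2}$ polynomials in $D_{ij}$, where $\varphi_{ij}$ belongs to the initial seed and $\varphi_{ij}^{2}$ to the seed $\Sigma_{2}$; crucially, Proposition \ref{prop:SeedS2} covers the \emph{entire} block $\{3,\dots,n\}^{2}$, below the diagonal included, which is why the paper stops the peeling at $m=2$ here instead of reaching for $\Sigma_{m}$ with large $m$ as you do. The two denominators $f_{i+1,j+1}$ and $f_{i+1,j+1}^{(2)}$ are distinct irreducible cluster variables, so Lemma \ref{lem:InAGSV} applies directly. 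To repair your $2\times2$ scheme you would have to prove the below-diagonal analogue of Proposition \ref{prop:Seed>2}, replace the single-entry denominators, and still manufacture the second expression --- at which point you will have reconstructed the paper's cofactor-expansion argument.
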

\begin{proof}
First, by Corollary \ref{cor:xijinAforalljgi}, $x_{ij}\in\mathcal{\overline{A}}_{1\mapsto n-1}$
for all $j>i$. 

To see that $x_{n-1,n-1}$ belongs to $\mathcal{\overline{A}}_{1\mapsto n-1}$
note that 
\[
\varphi_{n-1,n-1}=\left|\begin{array}{cc}
x_{n-1,n-1} & x_{n-1,n}\\
x_{n,x-1} & x_{nn}
\end{array}\right|
\]
 and we have 
\[
\varphi_{nn}^{1}=x_{n-1,n-1}f_{21}-x_{n,n-1}f_{21}^{\uparrow},
\]
 so 
\[
x_{n-1,n-1}=\frac{\varphi_{n-1,n-1}+x_{n-1,n}x_{n,n-1}}{x_{nn}}=\frac{\varphi_{nn}^{1}+x_{n,n-1}f_{21}^{\uparrow}}{\varphi_{21}}.
\]
So according to Lemma \ref{lem:InAGSV}, $x_{n-1,n-1}\in\overline{\mathcal{A}}$.

If $i=n-1$ and $j<i$ then 
\[
\varphi_{i,j}=\left|\begin{array}{cc}
x_{n-1,j} & x_{n-1,j+1}\\
x_{nj} & x_{n,j+1}
\end{array}\right|
\]
 and 
\[
\varphi_{n,j+1}^{1}=x_{n-1,j}f_{21}-x_{nj}f_{21}^{\uparrow}.
\]
 So 
\[
x_{n-1,j}=\frac{\varphi_{n-1,j}+x_{nj}x_{n-1,j+1}}{x_{n,j+1}}=\frac{\varphi_{n,j+1}^{1}+x_{nj}f_{21}^{\uparrow}}{f_{21}}.
\]
Inductively assuming $x_{n-1,j+1}\in\overline{\mathcal{A}}$, this
satisfy the conditions of Lemma \ref{lem:InAGSV}, and therefore $x_{ij}\in\mathcal{\overline{A}}_{1\mapsto n-1}$. 

If $i<n-1$ we use induction: Let $D_{ij}$ be the set of all $x_{k\ell}$
with $k\ge i$ and $\ell\ge j$ without $x_{ij}$. For a pair $(i,j)$
assume that $D_{ij}\subseteq\mathcal{\overline{A}}_{1\mapsto n-1}$
(that is, all $x_{k\ell}$ with $k\ge i$ and $\ell\ge j$ are in
$\mathcal{\overline{A}}_{1\mapsto n-1}$, except maybe $x_{ij}$ itself).
We have 
\[
\varphi_{ij}=\det X_{[i,n]}^{[j,\mu]}=x_{ij}\cdot f_{i+1,j+1}-p_{1}
\]
 where $p_{1}$ is a polynomial in the variables $x_{k\ell}\in D_{ij}$
and therefore $p_{1}\in\mathcal{\overline{A}}_{1\mapsto n-1}$. Similarly,
\[
\varphi_{ij}^{2}=\det X_{[i,n-2]}^{[j,\mu-2]}=x_{ij}f_{i+1,j+1}^{(2)}-p_{2}
\]
 with $p_{2}\in\mathcal{\overline{A}}_{1\mapsto n-1}$ again. Hence,
\[
x_{ij}=\frac{\varphi_{ij}+p_{1}}{f_{i+1,j+1}}=\frac{\varphi_{ij}^{2}+p_{2}}{f_{i+1,j+1}^{(2)}}
\]
 and so by Lemma \ref{lem:InAGSV} we get $x_{ij}\in\mathcal{\overline{A}}_{1\mapsto n-1}$.
\end{proof}

\section{The Toric action\label{sec:The-Toric-action}}

Proving Theorem \ref{thm:The-global-toric} is equivalent
to proving the following (see \cite{gekhtman2012cluster}):
\begin{enumerate}
\item \label{enu:TAeqCon1}for any $H_{1},H_{2}\in\mathcal{H}_{T}$ and
any $X\in SL_{n}$, 
\[
y_{i}\left(H_{1}XH_{2}\right)=H_{1}^{\eta_{i}}H_{2}^{\zeta_{i}}y_{i}\left(X\right)
\]
 for some weights $\eta_{i},\zeta_{i}\in\mathfrak{h}_{T}^{*}\ \left(i\in[n+m]\right)$;
\item \label{enu:TAeqCon2}$\spn\left\{ \eta_{i}\right\} _{i=1}^{\dim\mathcal{G}}=\spn\left\{ \zeta_{i}\right\} _{i=1}^{\dim\mathcal{G}}=\mathfrak{h}_{T}^{*}$
;
\item \label{enu:TAeqCon3}for every $i\in\left[\dim SL_{n}-2k_{T}\right]$,
\[
\sum_{j=1}^{\dim SL_{n}}b_{ij}\eta_{j}=\sum_{j=1}^{\dim SL_{n}}b_{ij}\zeta_{j}=0.
\]

\end{enumerate}
For a seed $\left(\tilde{\mathbf{x}},\tilde{B}\right)$ in $\mathcal{C}_{T}$,
and $y_{i}=\varphi\left(x_{i}\right)$ for $i\in[n+m]$.
\begin{prop}
For any BD triple $T=(\{\alpha\},\{\beta\},\alpha\mapsto\beta)$ on
$SL_{n}$ statements 1,2 and 3 above hold true.\end{prop}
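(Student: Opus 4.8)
The plan is to verify the three statements directly on the initial seed $\left(\mathcal{B}_{\alpha\beta},Q_{\alpha\beta}\right)$, since the weights $\eta_i,\zeta_i$ are determined by the way the initial cluster variables $\varphi_{ij}=\det\overline{M}_{ij}$ transform under $X\mapsto H_{1}XH_{2}$. Write $H_{1}=\diag(a_{1},\dots,a_{n})$ and $H_{2}=\diag(c_{1},\dots,c_{n})$ in $\mathcal{H}_{T}$, and let $e_{k}\in\mathfrak{h}^{*}$ be the functional $\diag(t_{1},\dots,t_{n})\mapsto t_{k}$, so that $H_{1}^{e_{k}}=a_{k}$. The key structural fact I will use throughout is that $\mathfrak{h}_{T}$ is cut out inside $\mathfrak{h}$ by $\alpha_{\alpha}-\alpha_{\beta}=0$, that is, by $(e_{\alpha}-e_{\alpha+1})-(e_{\beta}-e_{\beta+1})=0$.

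For statement 1, note that $(H_{1}XH_{2})_{kl}=a_{k}x_{kl}c_{l}$, so for an ordinary minor $\det X_{R}^{C}$ multilinearity in rows and columns gives immediately $\det(H_{1}XH_{2})_{R}^{C}=\bigl(\prod_{k\in R}a_{k}\bigr)\bigl(\prod_{l\in C}c_{l}\bigr)\det X_{R}^{C}$, so its row- and column-weights are $\eta=\sum_{k\in R}e_{k}$ and $\zeta=\sum_{l\in C}e_{l}$ restricted to $\mathfrak{h}_{T}$. The only subtlety is the two special families $\tilde{M}_{ij}$, whose two middle physical columns mix the original columns $\alpha,\alpha+1$ with $\beta,\beta+1$. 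Here I would use the block structure: each physical row still corresponds to a single original row, so the row-weight factors out cleanly for any $H_{1}$, while a generalized Laplace expansion leaves exactly two nonzero terms. For the first family (with $\mu=n-\beta$) this reads
\[
\det\tilde{M}_{ij}=\pm\Bigl(\det X_{[i,n]}^{[j,\alpha]}\det X_{[1,\mu]}^{[\beta+1,n]}-\det X_{[i,n]}^{[j,\alpha-1]\cup\{\alpha+1\}}\det X_{[1,\mu]}^{\{\beta\}\cup[\beta+2,n]}\Bigr),
\]
and the two products have column-weights differing by exactly $(e_{\alpha}-e_{\alpha+1})-(e_{\beta}-e_{\beta+1})$. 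This difference vanishes on $\mathfrak{h}_{T}$, so for $H_{2}\in\mathcal{H}_{T}$ both terms scale by the same character and $\varphi_{ij}=\det\tilde{M}_{ij}$ is semi-invariant. The second family is handled by the transpose-symmetric argument, where the row-weight carries the constraint and the column-weight factors out.

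For statement 2 it suffices to exhibit enough weights. The rightmost column of the initial cluster consists of the single-entry minors $\varphi_{kn}=x_{kn}$ for $k\neq\beta$ together with $\varphi_{nn}=x_{nn}$, whose row-weights are $e_{k}|_{\mathfrak{h}_{T}}$; since $\sum_{k}e_{k}=0$, the family $\{e_{k}:k\neq\beta\}$ already spans $\mathfrak{h}_{T}^{*}$, giving $\spn\{\eta_{i}\}=\mathfrak{h}_{T}^{*}$. The bottom row gives $\varphi_{nj}=x_{nj}$ for $j\neq\alpha$ with column-weights $e_{j}|_{\mathfrak{h}_{T}}$, and the identical count yields $\spn\{\zeta_{i}\}=\mathfrak{h}_{T}^{*}$; here one uses that $\dim\mathfrak{h}_{T}^{*}=n-2=k_{T}$, the two relations among the $e_{k}|_{\mathfrak{h}_T}$ being $\sum e_{k}=0$ and $\alpha_{\alpha}=\alpha_{\beta}$.

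Statement 3 asserts that the integer weight vectors $(\eta_{j})$ and $(\zeta_{j})$ lie in the kernel of the mutable part of $\tilde{B}_{\alpha\beta}$, i.e. that every exchange relation is weight-homogeneous. At a mutable vertex $(i,j)$ with the standard six neighbours, $\sum b_{(i,j),(k,l)}\eta_{kl}$ equals $\eta_{i+1,j}+\eta_{i,j+1}+\eta_{i-1,j-1}-\eta_{i-1,j}-\eta_{i,j-1}-\eta_{i+1,j+1}$, and substituting the interval weights makes this telescope to $0$, the extreme-row contributions $e_{M}$ and $e_{M+1}$ (with $M=n-j+i$) cancelling; below the diagonal the weights depend only on the row index and the sum is trivially $0$. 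I would then treat separately the vertices where this pattern breaks: those straddling the main diagonal (where the row-weight formula switches from an initial segment to $\sum_{[i,n]}$), those adjacent to the frozen boundary, and—this is the \emph{main obstacle}—the special mutable vertices $(\alpha+1,1)$ and $(1,\beta+1)$ together with the neighbours of the $\tilde{M}$-type variables, where both the arrow set (the added BD arrows) and the weights deviate from the standard pattern. At precisely these places the cancellation relies on the torus identity $(e_{\alpha}-e_{\alpha+1})-(e_{\beta}-e_{\beta+1})=0$; confirming the balance there for $\eta$, and symmetrically for $\zeta$, is the delicate bookkeeping that completes the proof.
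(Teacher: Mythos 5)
Your strategy is the same as the paper's: verify the three conditions on the initial extended cluster, read the weights off the minors, handle the two special families $\tilde{M}_{ij}$ via their two-term expansions, and reduce everything to the single linear condition cutting out $\mathfrak{h}_{T}$. Your treatment of statements 1 and 2 is complete and correct, and in two places cleaner than the paper's: you observe directly that $\mathfrak{h}_{T}$ is defined inside $\mathfrak{h}$ by $(e_{\alpha}-e_{\alpha+1})-(e_{\beta}-e_{\beta+1})=0$, whereas the paper reaches the equivalent identity \eqref{eq:SumWghtsab} by a detour through the Cartan matrix (parametrizing $\mathfrak{h}_{T}$ by the null space of $A=[C]_{\alpha}-[C]_{\beta}$ and checking $BT=[C]_{\alpha}-[C]_{\beta}$); and for statement 2 you span $\mathfrak{h}_{T}^{*}$ with the $1\times1$ variables $x_{kn}$ $(k\neq\beta)$ and $x_{nj}$ $(j\neq\alpha)$, where the paper uses the diagonal minors $f_{ii}$, $i\in[2,n]$ --- both arguments are equally valid, since restriction from $\mathfrak{h}^{*}$ to $\mathfrak{h}_{T}^{*}$ is surjective. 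Your Laplace expansion of $\det\tilde{M}_{ij}$ and the observation that the two terms' column-weights differ by exactly $(e_{\alpha}-e_{\alpha+1})-(e_{\beta}-e_{\beta+1})$ is precisely the mechanism the paper uses when it writes $\psi_{i}=f_{ij}f_{\alpha+1,1}-f_{ij}^{\downarrow}f_{\alpha+1,1}^{\uparrow}$ and imposes \eqref{eq:SumWghtsab}.

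The gap is in statement 3. You correctly reformulate it as weight-balance at every mutable vertex (legitimate because $b_{ij}\in\{0,\pm1\}$), you verify it for generic vertices by the telescoping argument (the paper's identity \eqref{eq:WtsonSmDiag}), and you correctly identify that the special vertices require the torus identity --- but you then stop, declaring the rest ``delicate bookkeeping.'' That bookkeeping is the actual content of the paper's proof, which consists of ten explicit cases, six of them at exactly the vertices you defer: $(n,\alpha)$, $(n,\alpha+1)$, $(\beta,n)$, $(\beta+1,n)$, $(1,\beta+1)$ and $(\alpha+1,1)$, where the added BD arrows change the neighbor sets and the $\theta$- and $\psi$-type weights enter. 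None of these checks fails --- for instance at $(\alpha+1,1)$ the in-neighbors are $(\alpha,1)$, $(\alpha+2,2)$, $(\beta+1,n)$, the out-neighbors are $(\alpha+1,2)$, $(\beta,n)$, and the balance reduces to $d_{\alpha}+d_{\beta+1}=d_{\alpha+1}+d_{\beta}$, i.e.\ exactly \eqref{eq:SumWghtsab} --- so your outline would go through; but as written the proof of statement 3 is an announced computation rather than a performed one. Note also that the boundary cases hide small traps your sketch does not anticipate: the corner $(n,n)$ has a different neighbor set when $\beta=n-1$ (the paper flags this in a footnote), and the balance at $(\alpha+1,1)$ only works because the initial quiver carries no arrow $(\alpha+1,1)\to(\alpha+2,1)$, a point one must get right about $Q_{\alpha\beta}$ before the cancellation can be confirmed.
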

\begin{proof}
Recall that 
\begin{equation}
\mathfrak{h}_{T}=\left\{ h\in\mathfrak{h}:\alpha(h)=\beta(h)\right\} .\label{eq:htDef}
\end{equation}
 Therefore, taking the basis $\left\{ h_{i}=e_{ii}-e_{i+1,i+1}\right\} $
we can parametrize $\mathfrak{h}_{T}$ with linear combinations $h=\sum c_{i}h_{i}$
with coefficient vectors $\left(c_{1},\ldots,c_{n-1}\right)$ subject
to a restriction derived from \eqref{eq:htDef}. To understand this
restriction look at the Cartan matrix $C$ of $\mathfrak{sl}_{n}$
\begin{eqnarray*}
C_{ij} & = & \begin{cases}
2 & i=j\\
-1 & \left|i-j\right|=1\\
0 & \text{otherwise, }
\end{cases}
\end{eqnarray*}
let $[C]_{i}$ denote the $i$-th row of $C$, and let $A$ be the
one row matrix $A=[C]_{\alpha}-[C]_{\beta}$. Then the coefficient
vector must be in the null space of $A$ (to satisfy $\alpha(h)=\beta(h)$).
Since $C$ is a symmetric $(n-1)\times(n-1)$ matrix, $A^{T}=C\left(e_{\alpha}-e_{\beta}\right)$
and clearly its null space is $n-2$ dimensional.

Assume $y_{m}$ is a function of the form $f_{ij}=\det X_{[i,k]}^{[j,\ell]}$.
Take a diagonal matrix $H_{1}\in\mathcal{H}_{T}$ where $H_{1}=\exp h$
for some $h=\diag(d_{1},\ldots,d_{n})$. Now set $\eta_{m}(h)=d_{i}+\ldots+d_{k}$.
It is easy to verify that 
\begin{equation}
y_{m}\left(H_{1}X\right)=H_{1}^{\eta_{m}}y_{m}\left(X\right).\label{eq:RowWght}
\end{equation}
 If $y_{m}$ is a function of the form
\[
\theta_{j}=\det\left[\begin{array}{cccccc}
x_{ij} & \cdots & \cdots & x_{i,\alpha+1} & 0 & \cdots\\
 & \ddots &  & \vdots & \vdots\\
x_{n1} & \cdots & x_{n\alpha} & x_{n,\alpha+1} & 0 & \cdots\\
\cdots & 0 & x_{1\beta} & x_{1,\beta+1} & \cdots & x_{1n}\\
 & \vdots & \vdots & \vdots & \ddots & \vdots\\
 &  & \vdots & x_{\beta j} & \cdots & x_{\beta n}\\
 & 0 & x_{n-\beta} & x_{n-\beta,j} & \cdots & x_{n-\beta,n}
\end{array}\right]
\]
 it is not hard to see that setting $\eta_{m}(h)=d_{1}+\cdots+d_{n-\beta}+d_{i}+\ldots+d_{n}$
yields \eqref{eq:RowWght} again.

The last case is when $y_{m}$ is of type $\psi$ - ,
\[
\psi_{i}=\det\left[\begin{array}{cccccc}
x_{ij} & \cdots & x_{in} & 0 & \cdots & 0\\
\vdots & \ddots & \vdots & \vdots &  & \vdots\\
x_{\beta j} & \cdots & x_{\beta n} & x_{\alpha1} & \cdots & x_{\alpha,n-\alpha}\\
x_{\beta+1,j} & \cdots & x_{\beta+1,n} & x_{\alpha+1,1} &  & \vdots\\
0 & \cdots & 0 & \vdots & \ddots & \vdots\\
\vdots &  & \vdots & x_{n1} & \cdots & x_{n,n-\alpha}
\end{array}\right]
\]
with $j=n+i-\beta$. We can write $\psi_{i}=f_{ij}f_{\alpha+1,1}-f_{ij}^{\downarrow}f_{\alpha+1,1}^{\uparrow}$,
or using determinants of submatrices 
\[
\psi_{i}=\det X_{[i,\beta]}^{[j,n]}\det X_{[\alpha+1,n]}^{[1,n-\alpha]}-\det X_{[1,\ldots,\beta-1,\beta+1]}^{[j,n]}\det X_{[\alpha,\alpha+2,\ldots,n]}^{[1,n-\alpha]}.
\]
 Therefore, \eqref{eq:RowWght} holds if 
\begin{equation}
d_{\beta}+d_{\alpha+1}=d_{\beta+1}+d_{\alpha},\label{eq:SumWghtsab}
\end{equation}
 i.e., the sum of weights on rows $\beta,\alpha+1$ is equal to the
sum of those on rows $\beta+1,\alpha$. In this case the weight is
$\eta_{m}(h)=d_{i}+\cdots+d_{\beta}+d_{\alpha+1}+\cdots d_{n}$. This
is equivalent to the condition that $h=\diag(d_{1},\ldots,d_{n})$
is in the null space of the row matrix $B=e_{\alpha}-e_{\alpha+1}-e_{\beta}+e_{\beta+1}$
(here $e_{k}$ is a row vector with $1$ in the $k$-th entry and
$0$ elsewhere). Let $T$ be the transformation matrix from the basis
$\left\{ h_{i}\right\} _{i=1}^{n-1}$ to the standard one (so the
$j$-th column of $T$ is $e_{j}-e_{j+1}$), and let $v^{T}=\left(c_{1},\ldots c_{n-1}\right)$
be the coefficient vector of $h$ in the basis $\left\{ h_{i}\right\} _{i=1}^{n-1}$
(so that $h=Tv$). It is not hard to verify that $BT=[C]_{\alpha}-[C]_{\beta}$,
hence 
\[
Bh=BTv=Av=0,
\]
because $v$ is in the null space of $A$ by its definition. This
establishes \eqref{eq:SumWghtsab}, and therefore \eqref{eq:RowWght}
holds in this case as well. 

Repeating the above with right multiplication by $H_{2}$, now with
weights $\zeta_{m}$, yields statement \ref{enu:TAeqCon1}.

Proving statement \ref{enu:TAeqCon2} is easy: as described above,
the weight associated with the function $f_{ii}$ is $\overline{\eta}_{i}(h)=d_{i}+\cdots+d_{n}$.
Therefore the set $\{\overline{\eta}_{2},\ldots,\overline{\eta}_{n}\}$
spans $\mathfrak{h}^{*}$, because this is a set of $n-1$ linearly
independent vectors in an $n-1$ dimensional space. Since $\mathfrak{h}_{T}^{*}$
is a subspace of $\mathfrak{h}^{*}$ (because $\mathfrak{h}_{T}$
is a subspace of $\mathfrak{h}$), this set also spans $\mathfrak{h}_{T}^{*}.$
The same holds for the weights $\zeta_{i}$.

To prove statement \ref{enu:TAeqCon3} we rephrase it as follows:
for every mutable vertex $v=(i,j)$ of the quiver, the sum of weights
over the neighbors with arrows pointing towards $v$ is equal to the
sum of weights over the neighbors with arrows pointing out of $v$.
This is true because $b_{ij}\in\{0,1,-1\}$. It will be proved for
the weights $\eta_{m}$ corresponding to the left multiplication $y_{m}\left(HX\right)=H^{\eta_{m}}y_{m}\left(X\right)$
but symmetric arguments will hold for $\zeta_{m}$ with right multiplication
to show $y_{m}\left(XH\right)=H^{\zeta_{m}}y_{m}\left(X\right)$.

As was explained above, for $h=\diag(d_{1},\ldots,d_{n})$ the weights
$\eta_{i}$ can be defined as 
\[
\eta_{m}(h)=\begin{cases}
d_{i}+\cdots+d_{k} & \text{ if }y_{m}=f_{ij}=\det X_{[i,k]}^{[j,\ell]}\\
d_{i}+\cdots+d_{n}+d_{1}+\cdots+d_{n-\beta} & \text{ if }y_{m}=\theta_{j}=f_{ij}f_{1,\beta+1}-f_{ij}^{\rightarrow}f_{1,\beta+1}^{\leftarrow}\\
d_{i}+\cdots+d_{\beta}+d_{\alpha+1}+\cdots+d_{n} & \text{ if }y_{m}=\psi_{i}=f_{ij}f_{\alpha+1,1}-f_{ij}^{\downarrow}f_{\alpha+1,1}^{\uparrow}.
\end{cases}
\]
Assume $y_{m}$ and $y_{m'}$ are two cluster variables that correspond
to two vertices on the same diagonal of the quiver, that is, $y_{m}$
corresponds to the vertex $(i,j)$ and $y_{m'}$ corresponds to $(i+k,j+k).$
Then 
\begin{equation}
\eta_{m}(h)=\eta_{m'}(h)+d_{i}+\cdots+d_{k-1}.\label{eq:WtsonSmDiag}
\end{equation}
 With this fact in mind, look at the sum of weights over all the vertices
adjacent to a mutable vertex $(i,j)$ and consider the following cases:

1. If $1<i,j<n$ then there are three arrows pointing to $(i,j)$
from the vertices $A=(i-1,j),\ B=(i,j-1)$ and $C=(i+1,j+1)$ and
three arrows from $(i,j)$ to the vertices $D=(i,j+1),\ E=(i+1,j)$
and $F=(i-1,j-1)$ (see Fig. \ref{fig:Nbrsij}). According to \eqref{eq:WtsonSmDiag},
\begin{eqnarray*}
\eta_{A} & = & \eta_{D}+d_{i-1}\\
\eta_{B} & = & \eta_{E}+d_{i}\\
\eta_{C} & = & \eta_{F}-d_{i}-d_{i-1}
\end{eqnarray*}
and therefore $\eta_{A}+\eta_{B}+\eta_{C}=\eta_{D}+\eta_{E}+\eta_{F}$.

\begin{figure}
\begin{centering}
\includegraphics[scale=0.45]{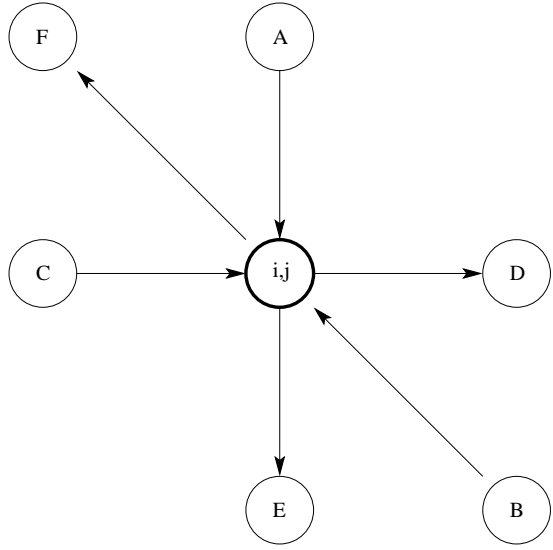} 
\par\end{centering}

\caption{The neighbors of $(i,j)$ when $1<i,j<n$}

\centering{}\label{fig:Nbrsij} 
\end{figure}

2. If $i=j=n$ then $(i,j)$ has only three neighbors (\footnote{If $\beta=n-1$ then $D=(\alpha+1,1)$ is also a neighbor and it is
easy to verify that the result holds.}Assuming $\beta\neq n-1$) as shown in Fig. \ref{fig:Nbrnn}: $A=\left(n-1,n\right)$
and $B=(n,n-1)$ with arrows to $(n,n)$ and $C=(n-1,n-1)$ with an
arrow pointing to it from $(n,n)$. In this case it is clear that
\begin{eqnarray*}
\eta_{A} & = & d_{n-1}\\
\eta_{B} & = & d_{n}\\
\eta_{C} & = & d_{n-1}+d_{n}
\end{eqnarray*}
 and so $\eta_{A}+\eta_{B}=\eta_{C}$. 

\begin{figure}
\begin{centering}
\includegraphics[scale=0.45]{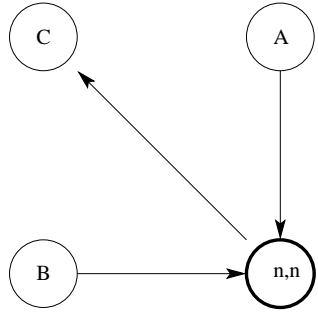} 
\par\end{centering}

\caption{The neighbors of $(n,n)$}
\label{fig:Nbrnn} 
\end{figure}

3. If $i=n$ and $j\notin\{\alpha,\alpha+1,n\}$, then there are four
neighboring vertices: $A=(n,j-1)$ and $B=(n-1,j)$ with arrows pointing
to $(n,j)$, and $C=(n,j+1)$ and $D=(n-1,j-1)$ with arrows pointing
to them. Here $y_{A}=x_{n,j-1}$ and $y_{D}=\left|\begin{array}{cc}
x_{n-1,j-1} & x_{n-1,j}\\
x_{n,j-1} & x_{nj}
\end{array}\right|$ so $\eta_{A}=d_{n}$ and $\eta_{D}=d_{n-1}+d_{n}.$ According to
\eqref{eq:WtsonSmDiag} $\eta_{B}=\eta_{C}+d_{n-1}$, so again, $\eta_{A}+\eta_{B}=\eta_{C}+\eta_{D}$.

4. The vertex $(n,\alpha)$ has three neighbors with arrows pointing
at it: $A=(n-1,\alpha),\ B=(1,\beta+1)$ and $C=(n,\alpha-1)$, and
two neighbors with arrows from $(n,\alpha)$ to them: $D=(n,\alpha+1)$
and $E=(n-1,\alpha-1)$ (see Fig. \ref{fig:Nbrna}). We have
\begin{eqnarray*}
\eta_{A} & = & \eta_{D}+d_{n-1}\\
\eta_{B} & = & \eta_{E}-d_{n-1}-d_{n}\\
\eta_{C} & = & d_{n}
\end{eqnarray*}
 and $\eta_{A}+\eta_{B}+\eta_{C}=\eta_{D}+\eta_{E}$.

\begin{figure}
\begin{centering}
\includegraphics[scale=0.45]{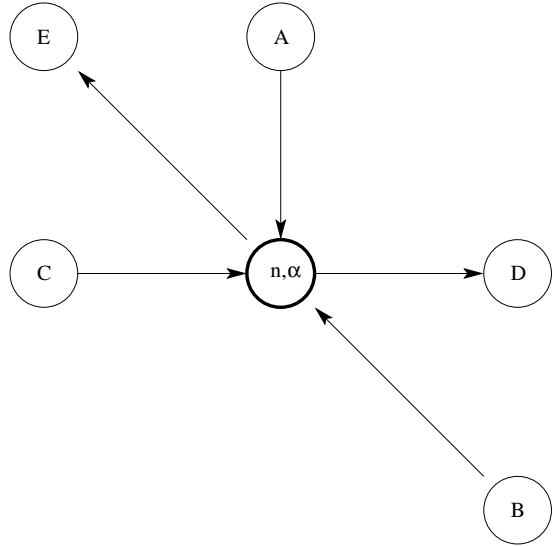} 
\par\end{centering}

\caption{The neighbors of $(n,\alpha)$}

\centering{}\label{fig:Nbrna} 
\end{figure}

5. The vertex $(n,\alpha+1)$ has two neighbors with arrows pointing
at it: $A=(n-1,\alpha+1)$ and $B=(n,\alpha)$. There are three neighbors
with arrows from $(n,\alpha+1)$ to them: $C=(n,\alpha+2)$, $D=(1,\beta+1)$
and $E=(n-1,\alpha)$. Fig. \ref{fig:Nbrna1} shows $(n,\alpha+1)$
and its neighbors. So here 

\begin{eqnarray*}
\eta_{A} & = & \eta_{C}+d_{n-1}\\
\eta_{B} & = & \eta_{D}+d_{n}\\
\eta_{E} & = & d_{n-1}+d_{n}
\end{eqnarray*}
and $\eta_{A}+\eta_{B}=\eta_{C}+\eta_{D}+\eta_{E}$.

\begin{figure}
\begin{centering}
\includegraphics[scale=0.45]{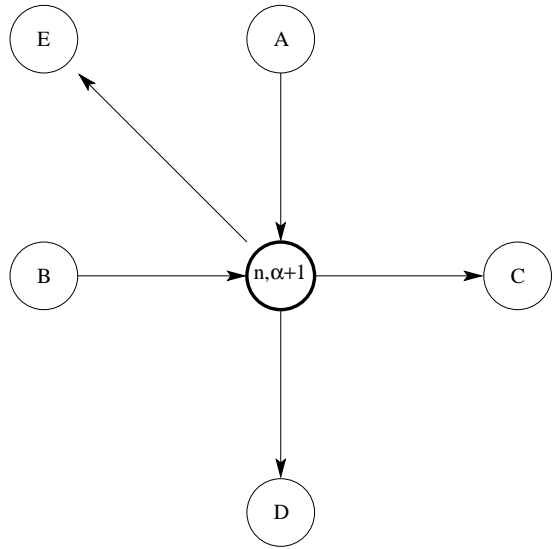} 
\par\end{centering}

\caption{The neighbors of $(n,\alpha+1)$}

\centering{}\label{fig:Nbrna1} 
\end{figure}

6. The vertex $(i,n)$ with $i\notin\{\beta,\beta+1,n\}$. There are
four neighbors as Fig. \ref{fig:Nbrin} shows: $A=(i-1,n)$ and $B=(i,n-1)$
with arrows pointing to $(i,n)$, and $C=(i+1,n)$ and $D=(i-1,n-1)$
with arrows pointing to them. Here $y_{A}=x_{i-1,n}$ and $y_{D}=\left|\begin{array}{cc}
x_{i-1,n-1} & x_{i-1,n}\\
x_{i,n-1} & x_{in}
\end{array}\right|$ so $\eta_{A}=d_{i-1}$ and $\eta_{D}=d_{i-1}+d_{i}.$ According to
\eqref{eq:WtsonSmDiag} $\eta_{B}=\eta_{C}+d_{n-1}$, so again, $\eta_{A}+\eta_{B}=\eta_{C}+\eta_{D}$.

\begin{figure}
\begin{centering}
\includegraphics[scale=0.45]{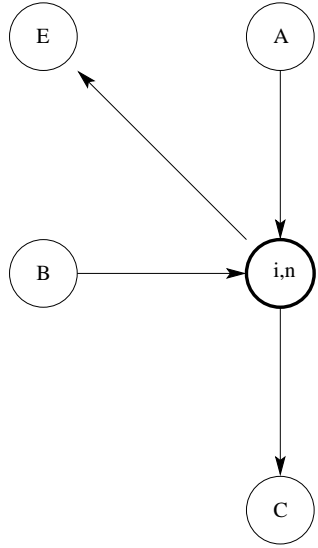} 
\par\end{centering}

\caption{The neighbors of $(i,n)$}

\label{fig:Nbrin} 
\end{figure}

7. The vertex $(\beta,n)$ has three neighbors with arrows pointing
at it: $A=(\beta-1,n),\ B=(\alpha+1,1)$ and $C=(\beta,n-1)$, and
two neighbors with arrows from $(\beta,n)$ to them: $D=(\beta+1,n)$
and $E=(\beta-1,n-1)$ (see Fig. \ref{fig:Nbrbn}). We have
\begin{eqnarray*}
\eta_{A} & = & d_{\beta-1}\\
\eta_{B} & = & \eta_{E}-d_{\beta-1}-d_{\beta}\\
\eta_{C} & = & \eta_{D}+d_{\beta}
\end{eqnarray*}
 and therefore $\eta_{A}+\eta_{B}+\eta_{C}=\eta_{D}+\eta_{E}.$

\begin{figure}
\begin{centering}
\includegraphics[scale=0.4]{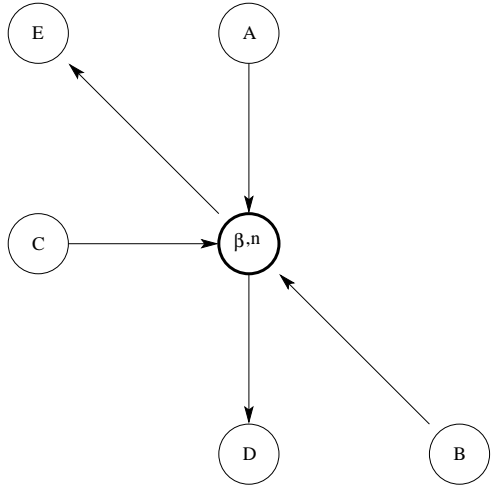} 
\par\end{centering}

\caption{The neighbors of $(\beta,n)$}

\label{fig:Nbrbn} 
\end{figure}

8. The vertex $(\beta+1,n)$ has two neighbors with arrows pointing
at it: $A=(\beta,n)$ and $B=(\beta+1,n-1)$. There are three neighbors
with arrows from $(\beta+1,n)$ to them: $C=(\alpha+1,1)$, $D=(\beta+2,n)$
and $E=(\beta,n-1)$ (see Fig \ref{fig:Nbrb1n}). So here 

\begin{eqnarray*}
\eta_{A} & = & \eta_{C}+d_{\beta}\\
\eta_{B} & = & \eta_{D}+d_{\beta+1}\\
\eta_{E} & = & d_{\beta}+d_{\beta+1}
\end{eqnarray*}
and $\eta_{A}+\eta_{B}=\eta_{C}+\eta_{D}+\eta_{E}$.

\begin{figure}
\begin{centering}
\includegraphics[scale=0.4]{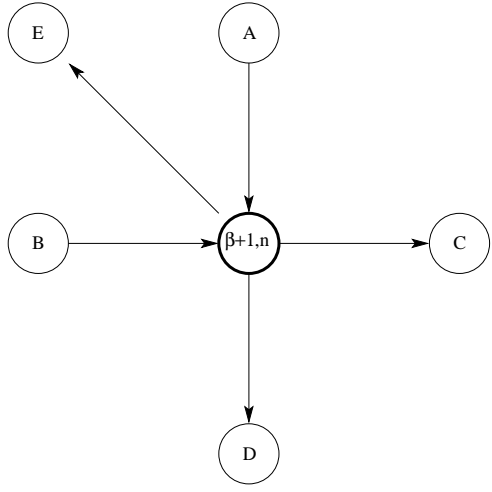} 
\par\end{centering}

\caption{The neighbors of $(\beta+1,n)$}

\label{fig:Nbrb1n} 
\end{figure}

9. The vertex $\left(1,\beta+1\right)$ has three neighbors with arrows
pointing at it: $A=(n,\alpha+1),$ $B=(2,\beta+2)$ and $C=(1,\beta)$.
There are two neighbors with arrows from $(1,\beta+1)$ to them: $D=(2,\beta+1)$
and $E=(n,\alpha)$ (see Fig. \ref{fig:Nb1b1}). So here 

\begin{eqnarray*}
\eta_{A} & = & d_{n}\\
\eta_{B} & = & \eta_{E}-d_{n}-d_{1}\\
\eta_{C} & = & \eta_{D}+d_{1}
\end{eqnarray*}
and again we have $\eta_{A}+\eta_{B}+\eta_{C}=\eta_{D}+\eta_{E}$.

\begin{figure}
\begin{centering}
\includegraphics[scale=0.4]{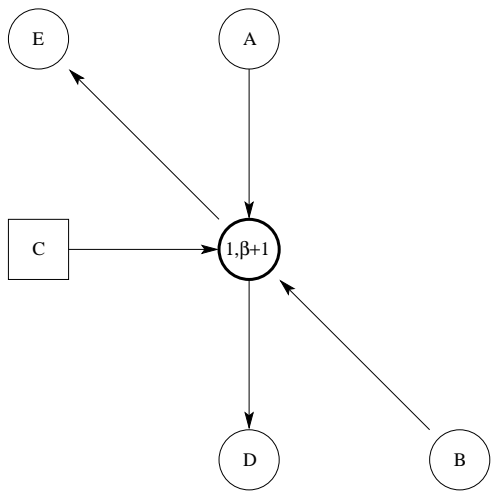} 
\par\end{centering}

\caption{The neighbors of $(1,\beta+1)$}

\label{fig:Nb1b1} 
\end{figure}

10. Last is the vertex $\left(\alpha+1,1\right)$, with three neighbors
with arrows pointing at it: $A=(1,\alpha),$ $B=(\alpha+2,2)$ and
$C=(\beta+1,n)$ and two neighbors with arrows from $(\alpha+1,1)$
to them: $D=(\alpha+1,2)$ and $E=(\beta,n)$ (see Fig. \ref{fig:Nbra11}).
So now

\begin{eqnarray*}
\eta_{A} & = & \eta_{D}+d_{\alpha}\\
\eta_{B} & = & \eta_{E}-d_{\alpha+1}-d_{\beta}\\
\eta_{C} & = & d_{\beta+1}
\end{eqnarray*}
and with \eqref{eq:SumWghtsab}, the result is $\eta_{A}+\eta_{B}+\eta_{C}=\eta_{D}+\eta_{E}$.

\begin{figure}
\begin{centering}
\includegraphics[scale=0.4]{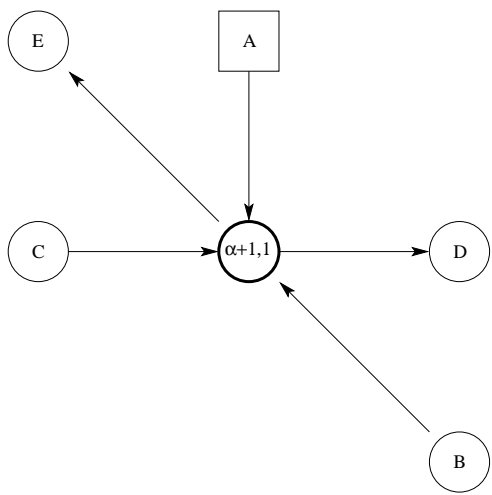} 
\par\end{centering}

\caption{The neighbors of $(\alpha+1,1)$}

\label{fig:Nbra11} 
\end{figure}

\end{proof}

\section*{Acknowledgments}

The author was supported by ISF grant \#162/12. The author thanks
Michael Gekhtman for his helping comments and answers. Special thanks to
Alek Vainshtein for his support and encouragement, as well as his
mathematical, technical and editorial advices.

\newpage{}

\bibliographystyle{abbrv}
\bibliography{Part2L04.bib}

\end{document}